\theoremstyle{plain}
\newtheorem{theorem}{Theorem}[section]
\theoremstyle{definition}
\newtheorem{Def}{Definition}[section]
\theoremstyle{plain}
\newtheorem{prop}{Proposition}[section]
\theoremstyle{plain}
\newtheorem{lemma}{Lemma}[section]
\theoremstyle{plain}
\theoremstyle{remark}
\newtheorem{remark}{{\it Remark}}
\theoremstyle{remark}
\newcommand{\bd}[1]{\begin{Def}\label{#1}}
\newcommand{\ed}{\end{Def}}
\newcommand{\bp}[1]{\begin{prop}\label{#1}}
\newcommand{\ep}{\end{prop}}
\newcommand{\bt}[1]{\begin{theorem}\label{#1}}
\newcommand{\et}{\end{theorem}}
\def\usigma{ \bm{\upvarsigma}}
\newcommand{\eex}{\end{example}}
\newcommand{\de}{\mathrm{d}}
\begin{document}
\large 
\title{Opinion dynamics with Lotka-Volterra type interactions}
\author{Michele Aleandri\footnote{GSSI-Gran Sasso Science Institute, Viale F.~Crispi 17, 67100 L'Aquila, Italy, \url{michele.aleandri@gssi.it}}, Ida G.~Minelli\footnote{Dipartimento di Ingegneria e Scienze
        dell'Informazione e Matematica, Universit\`a degli Studi
        dell'Aquila, Via Vetoio (Coppito 1), 67100 L'Aquila, Italy,
        \url{idagermana.minelli@univaq.it}}}  
\maketitle
\abstract{We investigate a class of models for opinion dynamics in a population with two interacting families of individuals. Each family has an intrinsic mean field ''Voter-like'' dynamics which is influenced by interaction with the other family. 
 The interaction terms describe a  cooperative/conformist or competitive/nonconformist attitude of one family with respect to the other.
We prove chaos propagation, i.e., we show that on any time interval $[0,T]$, as the size of the system goes to infinity, each individual behaves independently of the others with transition rates driven by a macroscopic equation. 
We focus in particular on models with Lotka-Volterra type interactions, i.e., models with cooperative vs. competitive families.  For these models, although the microscopic system is driven a.s. to consensus within each family,  a periodic behaviour arises in the macroscopic scale.\\
In order to describe fluctuations between the limiting periodic orbits, we identify a slow variable in the microscopic system and, through an averaging principle, we find a diffusion which describes the macroscopic dynamics of such variable on a larger time scale.}

\medskip
\noindent \textbf{Keywords.} Interacting particle systems;
stochastic dynamics with quenched disorder; opinion dynamics; scaling limits; chaos propagation; averaging principle\\  
\smallskip
\noindent \textbf{MSC2010 Classification.} 60K35, 60K37, 62P25

\section{Introduction}
A frequent phenomenon observed in social communities is the emergence of self-organized behaviours. In many large communities of randomly interacting individuals, such behaviours appear on a macroscopic scale and seem to follow an independent rule, namely, each individual in the  community feels the influence of other individuals   
through one or more macroscopic variables whose time evolution is deterministic.  
On a first approximation, one can assume that 
 members of a social community are described by identical  
units that evolve randomly in time, choosing their actions from a set of possible ''states'' and  interacting with their ''neighbours''. This assumption has motivated the interest in describing social systems with models based on a statistical physics approach. 
An introduction to the most popular of these models, with a general discussion on the usefulness of ideas and tools of statistical physics in the description of social dynamics, can be found in \cite{castellano2009statistical}. 
Typical questions for these models concern their behaviour when the size of the population or time becomes large.\\ 
Within this context, the field of opinion dynamics models is extremely vast and has attracted researchers from different areas such as social scientists, physicists, computer scientists and mathematicians. All these models differ from one another depending on the set of possible opinions, the structure of the underlying social network and the interaction mechanism between members of the population. 
Without claiming to give a complete description of such a wide field, we limit ourselves to mentioning here few standard examples coming from the classes of discrete and continuous opinion dynamics. According to social scientists (see, e.g., \cite{axelrod1997dissemination}, \cite{flache2011local}, \cite{mcpherson2001birds}), two fundamental characteristics in opinion formation are \emph{social influence}, i.e. the tendency of each individual to adjust her opinion to the one of her neighbours, and \emph{homophily}, i.e. the tendency to 
interact more frequently with individuals who are more similar.   
In dichotomic models, opinions are binary and social influence is usually described in terms of an attractive  interaction between agents. A basic example is the voter model \cite{holley1975ergodic}, where each agent, at random times, adopts the opinion of an agent who is randomly chosen from the set of her neighbours. 
A similar mechanism holds for the Axelrod model (\cite{axelrod1997dissemination}, \cite{lanchier2012axelrod}), where opinions are vector valued (with entries belonging to a finite set) and 
an agent interacts with one neighbour by copying one of the entries of her opinion. 
In continuous dynamics models, opinions are represented by points in a subset of $\mathbb{R}^d$ and each agent may adjust her opinion by adopting a weighted average of her and one (or more) neighbour's opinion.
Examples of such models are the Deffuant-Weisbuch (\cite{deffuant2000mixing}, \cite{haggstrom2014further}) and the Hegselmann-Krause \cite{hegselmann2002opinion}  models.  
In the Axelrod and Deffuant-Weisbuch models, the mechanism of homophily is introduced as follows: two agents interact only if their ''cultural distance'', i.e. the distance between the vectors representing their opinions (which is given by the discrete $L^1$ distance for the Axelrod model and the euclidean distance for the Deffuant-Weisbuch model) does not exceed a certain threshold. 
Models with this feature are known as \emph{bounded confidence models} (see \cite{lorenz2007continuous} for a survey. See also \cite{como2011} for models with heterogeneous populations). 
With this mechanism, convergence to consensus, which typically occurs when social influence is present, may fail  
yielding phenomena such as polarization or fragmentation of opinions within the population. \\ 
A way of describing homophily in dichotomic opinion models could be the introduction of some form of inhomogeneity in the population. For example, one may assume that individuals in the population have different cultural traits, which affect the way one agent's opinion is  influenced by the opinion of other agents (see, e.g., the models considered in \cite{collet2016rhythmic}).\smallskip\\
\indent In this paper we consider a dichotomic opinion model where the population is divided into two social groups, each one characterized by its attitude with respect to the other. 
Members of the same group interact with each other, while the other group exerts on them a social influence, that may also be null or even negative.   
We assume that the cultural characteristics of an individual do not change with her opinions.\\
The model is defined as an interacting particle system with quenched disorder taking values in $\{0,1\}^N$, where $N$ is the size of the population, and can be informally described as follows. 
A population is divided into two families of individuals that may have one of two possible opinions (labelled as $0$ and $1$) on a certain subject.  
For $i=1,2$, an individual of family $i$ chooses at random one member of the population and interaction occurs only if such member belongs to her family:  then, the decision to adopt the opinion of her neighbour is amplified or damped by a perceived utility, which is a (strictly positive) function $\phi_i$ of the fraction of individuals with the same opinion in the other community.\\ The derivative of such functions may be interpreted as a measure of the social influence of one community with respect to the other.  
For example, an increasing function describes a ''cooperative'' attitude, while a decreasing one corresponds to a ''competitive'' attitude. A zealot family may be represented by a $\phi_i$ constant or with a derivative close to zero.  
Other classes of functions can be considered, for example 
the attitude of one family could change from competitive to cooperative if consensus on a given opinion becomes widespread in the other community. 
Notice that this system has four absorbing states, corresponding to configurations where each one of the two families reaches consensus.\\    
We consider the mean field variables  $\bm{m}^N_i=\{\bm{m}^N_i(t)\}_{t\geq 0},\ i=1,2$ where $\bm{m}^N_i(t)$ denotes the fraction of agents with opinion $1$ in family $i$ at time $t$, and we show that they satisfy a law of large numbers: for large $N$, the behaviour of such variables is described by a macroscopic deterministic equation.     
Then we prove chaos propagation, i.e., we show that, for large populations, any finite set of particles evolves as an independent family with jump rates driven by the macroscopic mean field variables.\smallskip\\
 \indent We are mainly interested in the case of a cooperative family interacting with a competitive one. 
For this model, the microscopic interaction between individuals of the two families is a generalization of  the Lotka-Volterra classical interaction, where the utility functions are linear.  
In particular, the macroscopic system evolves through periodic orbits and we are able to identify a quantity $H$ that is  conserved along such orbits.\\  
Stochastic Lotka-Volterra models (see, e.g.,  \cite{klebaner2001asymptotic} and the references therein) have been introduced to study extinction in predator-prey models. 
Indeed, the deterministic models exhibit a cyclic behaviour and extinction is never achieved, while the introduction of noise drives the system towards extinction.   
However, in real social interactions extinction of a given opinion rarely occurs, so we adopt the opposite viewpoint: 
we give a stochastic microscopic description of a bipartite particle system with ''predator-prey'' type interactions. Such system converges a.s. to a configuration where all the members of the same family share the same opinion. On the other hand, letting the size of the population grow to infinity, we obtain a deterministic Lotka-Volterra type dynamics as a result of a law of large numbers.\\
The emergence of orbitally stable solutions in the macroscopic dynamics suggests that the microscopic system spends a considerably long time close to these sets. 
A one dimensional analogue of this scenario is given, for example, in the epidemic model considered in \cite{fagnani2016diffusion}, where the authors show that the macroscopic equation has a stable fixed point close to which the microscopic system spends a time that is  exponential in the size of the population.  
Thus, we consider the microscopic counterpart of the quantity $H$ 
and, through a change of variables, we represent the microscopic system by means of an ''action-angle'' pair $(\bm{H}^N, \bm{\Theta}^N)$ with a slow component $\bm{H}^N=\{\bm{H}^N(t)\}_{t\in [0,T]}$ and a fast one $\bm{\Theta}^N=\{\bm{\Theta}^N(t)\}_{t\in [0,T]}$. 
Then, in order to study how the system fluctuates between the mean field periodic orbits before reaching its absorbing set, we speed up the dynamics and consider the process 
$(\tilde{\bm{H}}^N, \tilde{\bm{\Theta}}^N)= 
(\{\bm{H}^N(Nt)\}_{t\in [0,T]}, \{\bm{\Theta}^N(Nt)\}_{t\in [0,T]})$. 
Following the approach of \cite{DAIPRA2018}, where a  two population Curie-Weiss model is considered, we prove an averaging principle, extending their result to the case when the velocity of the fast variable is not constant. From such principle we derive that, for large $N$, the dynamics of the pair $(\tilde{\bm{H}}^N, \tilde{\bm{\Theta}}^N)$ becomes essentially one dimensional and we prove that the process $\tilde{\bm{H}}^N$ weakly converges, as $N\to \infty$,  to the solution of a stochastic differential equation.\\           
If we interpret as ''more evolved'' a population where two possible opinions coexist and have majorities that change over time, our model suggests that evolution is promoted by cultural diversity, but when the speed of interactions is large compared to the size of the population convergence to consensus within one community is favoured, leading the system to a ''less evolved'' state.  
\\

\section{The model and its mean-field approximation in the quenched regime} \label{section2}
\paragraph{Microscopic system:}

In what follows, we fix two positive real functions, $\phi_1,\phi_2$ of class $\mathcal{C}^2$ on $[0,1]$.
 Consider a filtered  probability space $(\Omega, \mathcal{F}, \{\mathcal{F}_t\},  P)$ satisfying the usual conditions, in which it is defined a family
 $\mathcal{N}=\{ \mathcal{N}^{i,k}; k=1,2, i\geq 1\}$ 
 of i.i.d. adapted Poisson random measures with intensity  
 $\ell \otimes \ell$, where $\ell$ denotes the restriction to $[0, \infty )$
  of the Lebesgue measure, and a probability space $(\Omega^\prime, \mathcal{F}^\prime, \nu)$ in which  
 it is defined a sequence of i.i.d. Bernoulli random variables $\{\Phi_i; i\geq 1\}$ with values in $\{\phi_1, \phi_2\}$ and $\mathbb{P}\{\Phi_i=\phi_1\}=r_1\in (0,1)$.\\  
 Our reference probability space will be $(\bold{\Omega}, \mathcal{A}, \mathbb{P})$, 
 where $\bold{\Omega}=\Omega^\prime\times\Omega,   \mathcal{A}=\mathcal{F}^\prime\otimes\mathcal{F}$ and $\mathbb{P}= \nu\otimes P$.\\ 
 
For a fixed integer $N\geq 2$, we consider  $N$ interacting particles, each one assuming two possible values, 0 or 1. We denote by $\sigma^N$ the particles configuration and by $\sigma^N_{i},\ i=1,\ldots, N$ the state of particle $i$ . 
At each particle $i$ we assign a function $\Phi_i$, which is randomly chosen from $\{\phi_1, \phi_2\}$,  
so that particles are divided into two (random) families, which we call 1 and 2 . 
Let $\mathcal{M}_i=\{h: \Phi_h=\phi_i\}$ be the set of 
neighbours of particle $i$ and $\bar{\mathcal{M}}_i$ its complement in $\{1,\ldots N\}$. Then, conditionally on $\{\Phi_i; i\geq 1\}$,  particle $i$ jumps between states 0 and 1 with the following rates:  
\begin{eqnarray*}
0\to 1\quad &\frac{1}{N}\sum_{j\in\mathcal{M}_i}\sigma_{j}^N\Phi_i(\frac{1}{N}\sum_{j\in \bar{\mathcal{M}}_i}\sigma_{j}^N) ,\\  
1\to 0\quad &
\frac{1}{N}\sum_{j\in\mathcal{M}_i}(1-\sigma_{j}^N)\Phi_i(\frac{1}{N}\sum_{j\in \bar{\mathcal{M}}_i}(1-\sigma_{j}^N))
\end{eqnarray*}
with the convention $\sum_{j\in A}a_j=0$ if $A=\emptyset$. 
Since particles in the same family have the same jump rates, 
denoting, for $k=1,2$, by $N_k=\sum_i I_{\{\Phi_i=\phi_k\}}$ the number of particles in family $k$ and, for $j=1,\ldots N_k$,  by $\sigma^N_{j,k}$ the state of  particle $j$ in family $k$,  for each realization of $\{\Phi_i; i\geq 1\}$,  we can 
write the jump rates for families 1 and 2 as follows:

\begin{eqnarray} \label{rates Nfamily1 01}
0\to 1\quad &\frac{1}{N}\sum_{j=1}^{N_1}\sigma_{j,1}^N\phi_1(\frac{1}{N}\sum_{j=1}^{N_2}\sigma_{j,2}^N) ,\nonumber\\ 
1\to 0\quad &\frac{1}{N}\sum_{j=1}^{N_1}(1-\sigma_{j,1}^N)\phi_1(\frac{1}{N}\sum_{j=1}^{N_2}(1-\sigma_{j,2}^N));\nonumber\\
\ \\
0\to 1\quad &\frac{1}{N}\sum_{j=1}^{N_2}\sigma_{j,2}^N\phi_2(\frac{1}{N}\sum_{j=1}^{N_1}\sigma_{j,1}^N) ,\nonumber\\
1\to 0\quad &\frac{1}{N}\sum_{j=1}^{N_2}(1-\sigma_{j,2}^N)\phi_2(\frac{1}{N}\sum_{j=1}^{N_1}(1-\sigma_{j,1}^N)).\nonumber
\end{eqnarray}

We consider the system in the quenched regime, so that we have $\nu-$ a.s.  
\begin{equation*}
\frac{N_k}{N}\longrightarrow r_k,\quad k\in\{1,2\}
\end{equation*} 
where $r_2=1-r_1$. Moreover, since we want to study the system for large $N$, we can assume without loss of generality that $N_1, N_2>0$ for all $N$.\\

Let $m_1^N,m_2^N$ be the fraction of 1's of the first and second family, i.e.,  $m_k^N=\frac{1}{N_k}\sum_{j=1}^{N_k}\sigma_{j,k}^N$, $k=1,2$. We can rewrite the rates \eqref{rates Nfamily1 01} as:
\begin{equation*}
c(i,k, \sigma^N)= \left\{ \begin{array}{ll}
\frac{N_k}{N}m_k^N\phi_k(\frac{N_{k'}}{N}m_{k'}^N)  & \mbox{if }\  \sigma^N_{i,k}=0\\  
\frac{N_k}{N}(1-m_{k}^N)\phi_k(\frac{N_{k'}}{N}(1-m_{k'}^N)) & \mbox{if }\  \sigma^N_{i,k}=1
\end{array}
\right.
 \end{equation*}
where $k'=3-k$ and $k\in\{1,2\}$, so that  
the $N$-particles system in the quenched regime is described by the Markov process on $\{0,1\}^N$ with generator:  
\begin{equation}\label{generatorN}
\mathcal{L}_Nf (\sigma^N)=\sum_{k=1}^2\sum_{i=1}^{N_k} c(i,k, \sigma^N)[f(\sigma^{N,i,k})-f(\sigma^N)]
\end{equation}
where $f:\{0,1\}^N\to\mathbb{R}$ and $\sigma^{N,i,k}$ denotes the configuration obtained by $\sigma^N$ by replacing $\sigma^N_{i,k}$ 
with $1-\sigma^N_{i,k}$. Note that the process has four  absorbing states, corresponding to the configurations where all the particles within a given family have the same state. In the language of opinion dynamics, such configurations are usually called ''consensus'', when all the particles in the population share the same state, or ''polarization'' otherwise.\\ 

In what follows, we shall use the bold notation 
$\usigma^N=\{\usigma^N(t)\}_{t\geq 0}$ to denote a Markov process with generator (\ref{generatorN}). We denote  
by $\usigma^N_{i,k}(t)$  the state of particle $i$ of family $k$ at time $t$ and by $ \big(\bm{m}^N_1, \bm{m}^N_2\big)$ the stochastic process $\{(\bm{m}^N_1(t), \bm{m}^N_2(t))\}_{t\geq 0}$ where  $\bm{m}_k^N(t)=\frac{1}{N_k}\sum_{j=1}^{N_k}\usigma_{j,k}^N(t)$, $k=1,2$.\\ 
The process $\usigma^N$ can be realized on $(\Omega, \mathcal{F}, \{\mathcal{F}_t\}_t, P)$ as the solution of the following SDE: 
\begin{equation}\label{equation micro poisson}
 \de \bm{\upvarsigma}_{i,k}^N(t)= \int_0^\infty\psi( \bm{\upvarsigma}_{i,k}^N(t-))\mathbbm{1}_{(0,\lambda^N( \bm{\upvarsigma}_{i,k}^N(t-),\bm{m}_k^N(t-),\bm{m}_{k'}^N(t-))]}(u)\mathcal{N}^{i,k}(\de u,\de t)
\end{equation}
$i=1,\ldots , N$, $k=1,2$, $k'=3-k$, 
where  $\lambda^N:\{0,1\}\times[0,1]\times[0,1]\to \mathbb{R}_+$ is  the jump rate function 
\begin{equation}\label{rate function lambdaN}
\lambda^N(\sigma_{i,k}^N,m_k^N,m_{k'}^N)= (1-\sigma_{i,k}^N)\frac{N_k}{N}m_k^N\phi_k\left(\frac{N_{k'}	}{N}m_{k'}^N\right) + \sigma_{i,k}^N\frac{N_k}{N}(1-m_k^N)\phi_k\left(\frac{N_{k'}	}{N}(1-m_{k'}^N)\right)
\end{equation}
and  $\psi:\{0,1\}\to\mathbb{R}_+$ is the jump amplitude function
\begin{equation*}
\psi(\sigma_{i,k}^N)=1-2\sigma_{i,k}^N.
\end{equation*}

\begin{remark}
Note that the functions $\psi$ and $\lambda^N$ are uniformly bounded. Moreover, since $\psi$ and $\phi_k$ are Lipschitz functions, if we pose  
$f(\sigma^N_{i,k}, u)=\psi (\sigma^N_{i,k})\mathbbm{1}_{(0,\lambda^N( \sigma_{i,k}^N, m_k(\sigma^N), m_{k'}(\sigma^N))]}(u)$, where $m_k(\sigma^N)=\frac{1}{N_k}\sum_{j=1}^{N_k}\sigma^N_{j,k}$, the following Lipschitz condition holds:  
$$\sum_{i,k}\int |f(\sigma^N_{i,k}, u)-f(\eta^N_{i,k}, u)| \de 	u\leq C  
 \|\sigma^N-\eta^N\| \ \ \ \mbox{for all } \sigma^N, \eta^N\in \{0,1\}^N,$$
where $\|\cdot \|$ denotes the $L^1$ norm on $\mathbb{R}^N$ and $C$ is a suitable constant.  
Strong existence and uniqueness of solutions to (\ref{equation micro poisson}), with any initial condition $\usigma^N(0)$ independent of $\mathcal{N}$, can be derived by adapting the proof of Theorem 1.2 in \cite{graham1992mckean}.

 \end{remark}
 
We then obtain a family of Markov processes $\big\{\usigma^N\ ; N>1\big\}$ where $\usigma^N$ has sample paths in the space of càdlàg  functions
$ \mathcal{D}([0,\infty), \mathbb{R}^N)$ and generator given by $\mathcal{L}_N$.

 \paragraph{Macroscopic system:}
At an heuristic level, let us make the assumption that a law of large numbers holds for $\big\{\bm{m}^N_k\ ; N>1\big\}$, $k=1,2$,  i.e., it converges, as $N\to\infty$, to a deterministic function $m_k$. Then, for large $N$, the system can be  described by a macroscopic dynamics: if  $\bm{\upvarsigma}_k$ denotes the state of the ''limit particle'' of family $k$, we expect that it evolves as a time-inhomogeneous Markov process with jump rates:
\begin{equation}\label{macrosigma}
\begin{array}{ll}
0\to 1\quad & r_km_k\phi_k(r_{k'}m_{k'}),\\
1\to 0\quad &r_k(1-m_k)\phi_k(r_{k'}(1-m_{k'}))
\end{array}
\end{equation}
where $m_k(t)=E[\usigma_k(t)]$ for all $t$ and $m_k$ satisfy 
a suitable evolution equation.\\ 
We can obtain such equation using the generator of the Markov process above, which we denote by $\mathcal{L}$:
\begin{multline}\nonumber
\dot{m}_k=\frac{d}{dt}E[\bm{\upvarsigma}_k]=E[\mathcal{L}\bm{\upvarsigma}_k] \\
= E[ (1-\bm{\upvarsigma}_k)r_km_k\phi_k(r_{k'}m_{k'}) - \bm{\upvarsigma}_kr_k(1-m_k)\phi_k(r_{k'}(1-m_{k'}))] \\
= m_k(1-m_k)r_k\big[\phi_k(r_{k'}m_{k'})-\phi_k(r_{k'}(1-m_{k'}))\big].
\end{multline}
We obtain the system:
\begin{equation}\label{macro}
\begin{array}{ll}
\dot{m}_1=r_1m_1(1-m_1)[\phi_1(r_2m_2)-\phi_1(r_2(1-m_2) )];\\
\dot{m}_2=r_2m_2(1-m_2)[\phi_2(r_1m_1)-\phi_2(r_1(1-m_1) ) ]. 
\end{array}
\end{equation} 
The following proposition shows indeed that, as $N\to +\infty$, the sequence $\{(\bm{m}_1^N, \bm{m}_2^N) ; N>1\}$ converges in distribution to the deterministic process $(m_1, m_2)$ described by equation (\ref{macro}). 

\begin{prop}	
Suppose there exists a non-random pair $(\bar{m}_1,\bar{m}_2)\in[0,1]^2$ such that, for every $\epsilon>0$, 
\begin{equation*}
\lim_{N\to+\infty}\max_{k=1,2}P\big(|\bm{m}_k^N(0)-\bar{m}_k|>\epsilon\big)=0.
\end{equation*} Then the sequence of Markov processes $\big\{ (\bm{m}_1^N,\bm{m}_2^N)\ ; N>1\big\}$ converges in distribution, as $N\to +\infty$, to the unique solution of  equation (\ref{macro}) with $(m_1(0), m_2(0))=(\bar{m}_1, \bar{m}_2)$.
\end{prop}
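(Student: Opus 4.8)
The plan is to bypass any genuine propagation-of-chaos argument and reduce the statement to a Gronwall estimate, exploiting the fact that the pair $(\bm{m}_1^N,\bm{m}_2^N)$ is \emph{itself} a Markov process with a closed (self-contained) evolution. Starting from the SDE representation \eqref{equation micro poisson}, summing over $j$ in family $k$, dividing by $N_k$, and compensating each Poisson measure $\mathcal{N}^{j,k}$ by $\de u\,\de s$, I would obtain the semimartingale decomposition
\[
\bm{m}_k^N(t)=\bm{m}_k^N(0)+\int_0^t b_k^N\big(\bm{m}_1^N(s),\bm{m}_2^N(s)\big)\,\de s+\bm{M}_k^N(t),\qquad k=1,2 .
\]
Here, after collecting the $N_k(1-\bm{m}_k^N)$ particles in state $0$ (for which $\psi=1$) and the $N_k\bm{m}_k^N$ particles in state $1$ (for which $\psi=-1$), the drift collapses to
\[
b_k^N(x_1,x_2)=\tfrac{N_k}{N}\,x_k(1-x_k)\Big[\phi_k\big(\tfrac{N_{k'}}{N}x_{k'}\big)-\phi_k\big(\tfrac{N_{k'}}{N}(1-x_{k'})\big)\Big],
\]
and $\bm{M}_k^N$ is a purely discontinuous martingale, with no integrability worries since the rates are uniformly bounded and the state space is finite. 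Because $N_k/N\to r_k$ $\nu$-a.s.\ and each $\phi_k$ is (Lipschitz) continuous on the compact $[0,1]$, one has $\varepsilon_N:=\sup_{x\in[0,1]^2}\max_k|b_k^N(x)-b_k(x)|\to 0$, where $b_k$ is the right-hand side of \eqref{macro}.

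Next I would control the martingale. Since $\psi^2\equiv 1$ and $\lambda^N$ is uniformly bounded by some constant $C$ (cf.\ the Remark), the predictable quadratic variation satisfies
\[
\langle \bm{M}_k^N\rangle_t=\frac{1}{N_k^2}\sum_{j=1}^{N_k}\int_0^t \lambda^N\big(\usigma_{j,k}^N(s),\bm{m}_k^N(s),\bm{m}_{k'}^N(s)\big)\,\de s\le \frac{C\,t}{N_k},
\]
so Doob's $L^2$ inequality gives $\mathbb{E}\big[\sup_{t\le T}|\bm{M}_k^N(t)|^2\big]\le 4CT/N_k\to 0$, hence $\sup_{t\le T}|\bm{M}_k^N(t)|\to 0$ in probability.

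Then, let $(m_1,m_2)$ be the solution of \eqref{macro} with $(m_1(0),m_2(0))=(\bar m_1,\bar m_2)$, which exists and is unique on $[0,T]$: the vector field $b=(b_1,b_2)$ is $\mathcal{C}^1$ (as $\phi_k\in\mathcal{C}^2$), hence Lipschitz with some constant $L$ on the compact set $[0,1]^2$, and $[0,1]^2$ is invariant because $b_k$ vanishes on $\{x_k\in\{0,1\}\}$. Subtracting the integral equations, writing $\|\cdot\|$ for the $\ell^1$ norm on $\mathbb{R}^2$, and splitting $\|b^N(\bm{m}^N(s))-b(m(s))\|\le \varepsilon_N+L\|\bm{m}^N(s)-m(s)\|$, I would get
\[
\sup_{s\le t}\|\bm{m}^N(s)-m(s)\|\le \|\bm{m}^N(0)-\bar m\|+T\varepsilon_N+\textstyle\sum_k\sup_{s\le T}|\bm{M}_k^N(s)|+L\int_0^t\sup_{u\le s}\|\bm{m}^N(u)-m(u)\|\,\de s,
\]
and Gronwall's lemma yields $\sup_{s\le T}\|\bm{m}^N(s)-m(s)\|\le\big(\|\bm{m}^N(0)-\bar m\|+T\varepsilon_N+\sum_k\sup_{s\le T}|\bm{M}_k^N(s)|\big)e^{LT}$. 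The right-hand side tends to $0$ in probability by the hypothesis on $\bm{m}_k^N(0)$ and the martingale bound; uniform-in-probability convergence on $[0,T]$ to a deterministic limit implies convergence in distribution in $\mathcal{D}([0,\infty),\mathbb{R}^2)$, which is the claim.

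The step I would be most careful about — and the one that really makes the proof short — is the algebraic collapse of the drift: the rate $\lambda^N$ felt by any particle of family $k$ depends on the configuration only through $(\bm{m}_k^N,\bm{m}_{k'}^N)$, so summing $\psi\cdot\lambda^N$ over the family produces a function of $(\bm{m}_1^N,\bm{m}_2^N)$ alone, making the equation for the empirical fractions genuinely closed; everything else (the bounded-rate martingale estimate, the $N_k/N\to r_k$ replacement, the Lipschitz/Gronwall step) is routine. An alternative would be the Ethier--Kurtz route — tightness of $\{(\bm{m}_1^N,\bm{m}_2^N)\}$ via the Aldous criterion using the uniformly bounded jump rates, followed by identification of every limit point as the (unique) solution of the martingale problem for \eqref{macro} — but the coupling argument above is both shorter and delivers the stronger uniform-in-probability statement.
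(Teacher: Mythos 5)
Your proof is correct, but it follows a genuinely different route from the paper. The paper's proof is a two-step generator argument: it observes that $(\bm{m}_1^N,\bm{m}_2^N)$ is itself Markov with generator $\mathcal{G}_N$ (the same ``closedness'' you exploit), checks that $\sup_{(x,y)\in E^N}|\mathcal{G}_Nf(x,y)-\mathcal{G}f(x,y)|\to 0$ for $f\in\mathcal{C}^1([0,1]^2)$, and then invokes the standard convergence theorems of Ethier--Kurtz (Ch.~3, Cor.~7.4 and Ch.~4, Thm~8.10) --- essentially the route you mention only as an alternative, except that the uniform generator convergence replaces a separate tightness argument. You instead work pathwise from the Poisson SDE \eqref{equation micro poisson}: the algebraic collapse of the summed drift to $b_k^N(\bm{m}_1^N,\bm{m}_2^N)$ is exactly right (split the family into the $N_k(1-\bm{m}_k^N)$ particles with $\psi=1$ and the $N_k\bm{m}_k^N$ particles with $\psi=-1$), the bound $\langle\bm{M}_k^N\rangle_T\le CT/N_k$ with Doob, the uniform estimate $\varepsilon_N\to0$ from $N_k/N\to r_k$ ($\nu$-a.s., consistent with the quenched setting), the invariance of $[0,1]^2$ and the Lipschitz/Gronwall step are all sound (up to harmless factors of $2$ in the $\ell^1$ bookkeeping), and uniform convergence in probability to a deterministic continuous path does imply the claimed convergence in distribution on $\mathcal{D}$. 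What each approach buys: the paper's argument is shorter on the page because the heavy lifting is outsourced to semigroup-convergence theorems, whereas yours is self-contained, yields the stronger uniform-in-probability law of large numbers on $[0,T]$, and is stylistically closer to the coupling/Gronwall machinery the paper itself deploys in the proof of Proposition \ref{prop chaos}, where exactly this kind of estimate for the empirical means is used.
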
  
\begin{proof}
Let  $\mathcal{L}_N$ be the generator of the evolution of the particles and $E^N=\{(x_1,x_2)\in [0,1]^2: x_k=\frac{j}{N_k},  0\leq j\leq N_k, k=1,2 \}$. For $f:E^N\to\mathbb{R}$ we can write 
$f(m_1^N, m_2^N)=(f\circ h) (\sigma^N)$ for a suitable function $h:\{-1,1\}^N\to E^N$. Then, a direct computation yields:
\begin{equation*}
\mathcal{L}_N(f\circ h)(\sigma^N)=\mathcal{G}_N f(m_1^N,m_2^N)
\end{equation*}
where
\begin{eqnarray*}
\mathcal{G}_Nf(x,y)&=& N_1 \frac{N_1}{N}x(1-x)\phi_1\left ( 
\frac{N_{2}}{N}y\right ) \left [ f\left (x+\frac{1}{N_1},y\right )-f(x,y)
\right ] \\
&+&  N_1\frac{N_1}{N} x(1-x)\phi_1\left ( \frac{N_{2}}{N}(1-y)\right ) 
 \left [ f
\left (x-\frac{1}{N_1},y\right )-f(x,y)\right ] \\
&+&  N_2 \frac{N_2}{N}y(1-y)\phi_2\left (\frac{N_{1}}{N}x\right ) \left [ f\left 
 (x,y+\frac{1}{N_2}\right )-f(x,y)\right ]\\
 &+& N_2 \frac{N_2}{N}y(1-y)\phi_2\left (\frac{N_{1}}{N}(1-x)\right )\left [ f  \left (x,y-\frac{1}{N_2}\right )-f(x,y)\right ] .  
\end{eqnarray*}
Denote by $\mathcal{G}$ the generator of the semigroup associated to the deterministic evolution (\ref{macro}). If $f\in\mathcal{C}^1([0,1]^2)$, one checks that: 
\begin{equation*}
\lim_{N\to+\infty}\sup_{(x,y)\in E^N}|\mathcal{G}_Nf(x,y)-\mathcal{G}f(x,y)|=0.
\end{equation*}
The conclusion then follows applying standard results on convergence of Markov processes (see, e.g., \cite{EthierKurtz}, Ch.\ 3, Corollary 7.4 and Ch.\ 4, Theorem 8.10).
\end{proof}

\noindent Figures 1 and 2 below show a picture of the solutions of the macroscopic equation \eqref{macro} for different choices of $\phi_i,\ i=1,2$.\smallskip\\
Now we analyse equation \eqref{macro} in the case when $\phi_1$ and $\phi_2$ are \emph{strictly monotonic} functions.  
We define,  for $i,j=1,2, i\neq j$ and $z\in [0,1]$: 
\begin{equation*}
\psi_i(z):=\phi_i(r_j z)-\phi_i(r_j(1-z)).
\end{equation*}
Since, for all $z$,   
 $\psi_i(z)=\phi_i^\prime(r_j\xi)r_j(2z-1)$ for some convex combination $\xi$ of $z$ and $1-z$, the set of fixed points 
of (\ref{macro}) is given by $S=\{(0,0), (0,1), (1,0), (1,1), (\frac{1}{2},\frac{1}{2})\}$; their stability, as can be easily checked by linearising the system, depends on the sign of $\psi_i(0), i=1,2$, which in turn depends 
on the sign of $\phi_i^\prime, i=1,2$. 
%
%
In particular, when both $\phi_1$ and $\phi_2$ are increasing (resp. decreasing), the points $(0,0)$ and $(1,1)$ (resp. $(0,1)$ and $(1,0)$) are stable, while   
 $(0,1)$ and $(1,0)$ (resp. $(0,0)$ and $(1,1)$) are unstable. 
Moreover, 
the characteristic equation for the Jacobian matrix at $(1/2, 1/2)$  is given by: 
\begin{equation*}
\lambda^2- \frac{(r_1r_2)^2}{4}\phi_1^\prime(\frac{r_2}{2})\phi_2^\prime(\frac{r_1}{2})=0 
\end{equation*}
and, 
for $\phi_1^\prime \phi_2^\prime>0$, the point $(1/2, 1/2)$ is unstable.\\
Now, let us assume that $\phi_1$ is increasing  and $\phi_2$ is decreasing. In this case, the point  $(1/2, 1/2)$ is a center for the linearised system. 
Indeed, consider equation (\ref{macro}) for $(m_1,m_2)\in (0,1)^2$; multiplying both terms of the first equation by 
$\frac{\psi_2(m_1)}{r_1m_1(1-m_1)}$ and using the second equation we obtain:  
\begin{equation*}
\frac{\psi_2(m_1)}{r_1 m_1(1-m_1)}\dot{m}_1-\frac{\psi_1(m_2)}{r_2 m_2(1-m_2)}\dot{m}_2=0. 
\end{equation*}
 Then, if we pose  $\Psi_1(z):=\int \frac{\psi_2(z)}{r_1z(1-z)}\de z$ and 
 $\Psi_2(w):=-\int \frac{\psi_1(w)}{r_2w(1-w)}\de w$, the function 
 $H: (0,1)^2\to\mathbb{R}$ defined by $H(z,w)=\Psi_1(z)+\Psi_2(w)$ is such that 
 $\frac{dH}{dt}(m_1, m_2)=0$, and so the sets of the form 
 $\mathcal{C}_k=\{(z,w)\in (0,1)^2: H(z,w)=k\}$ are invariant for the dynamics (\ref{macro}). \\ 
The Hessian of $H$ is diagonal with entries given by 
 $h_{11}= \frac{\psi^\prime_2(z)}{r_1z(1-z)}-\psi_2(z)\frac{2z-1}{r_1z^2(1-z)^2}$   
 and $h_{22}= -\frac{\psi^\prime_1(w)}{r_2w(1-w)}+\psi_1(w)\frac{2w-1}{r_2w^2(1-w)^2}$.
Recalling that $\psi_2^\prime<0$, $\psi_1^\prime >0$ and, for $i=1,2$,  $\ \psi_i(z)=\psi_i^\prime(\xi_z)(2z-1)$ where $\xi_z=\alpha z +(1-\alpha)(1-z)$ for some $\alpha\in (0,1)$, we have $h_{ii}<0$ for $i=1,2$, so that $H$ is a concave function with maximum at 
$(1/2, 1/2)$. 
Notice that, for all $\bar{z}\in (0,1)$,  $\Psi_1(\bar{z})-\Psi_1(\frac{1}{2})=\int_{1/2}^{\bar{z}} \frac{\psi_2(z)}{r_1z(1-z)}\de z=
\int_{1/2}^{\bar{z}} \frac{-\psi_2(1-z)}{r_1z(1-z)}\de z= 
\int_{1/2}^{1-\bar{z}}\frac{\psi_2(w)}{r_1w(1-w)}\de w$ from which it follows $\Psi_1(z)=\Psi_1(1-z)$. Analogously, $\Psi_2(w)=\Psi_2(1-w)$, then 
for $(z,w)\in \mathcal{C}_k$ we have also $(1-z, w), (z, 1-w), (1-z, 1-w)\in \mathcal{C}_k$ and this shows that  $\mathcal{C}_k$ is a closed curve. Moreover, one can easily check that the curves $\mathcal{C}_k$ are orbitally stable solutions of \eqref{macro} and that,  
as $(z,w)$ approaches the boundary of the square $[0,1]^2$, the Hamiltonian $H$ tends to $-\infty$.
\begin{remark}
The above computation shows that the system undergoes a Hopf bifurcation determined by the sign of $\phi'_1(\frac{r_2}{2})\phi_2'(\frac{r_1}{2})$. 
When $\phi_1$ and $\phi_2$ are not both montone, other equilibria may appear in the macroscopic equation and their stability depends locally on the sign of $\phi_i',\  i=1,2$. In particular, 
periodic orbits may be observed around different points of the phase space (see figure 2).   
\end{remark}

\begin{figure}[t]
\centering
\subfloat[][$\phi_2'>\phi'_1>0$]
{\includegraphics[height=%
0.25\textheight]{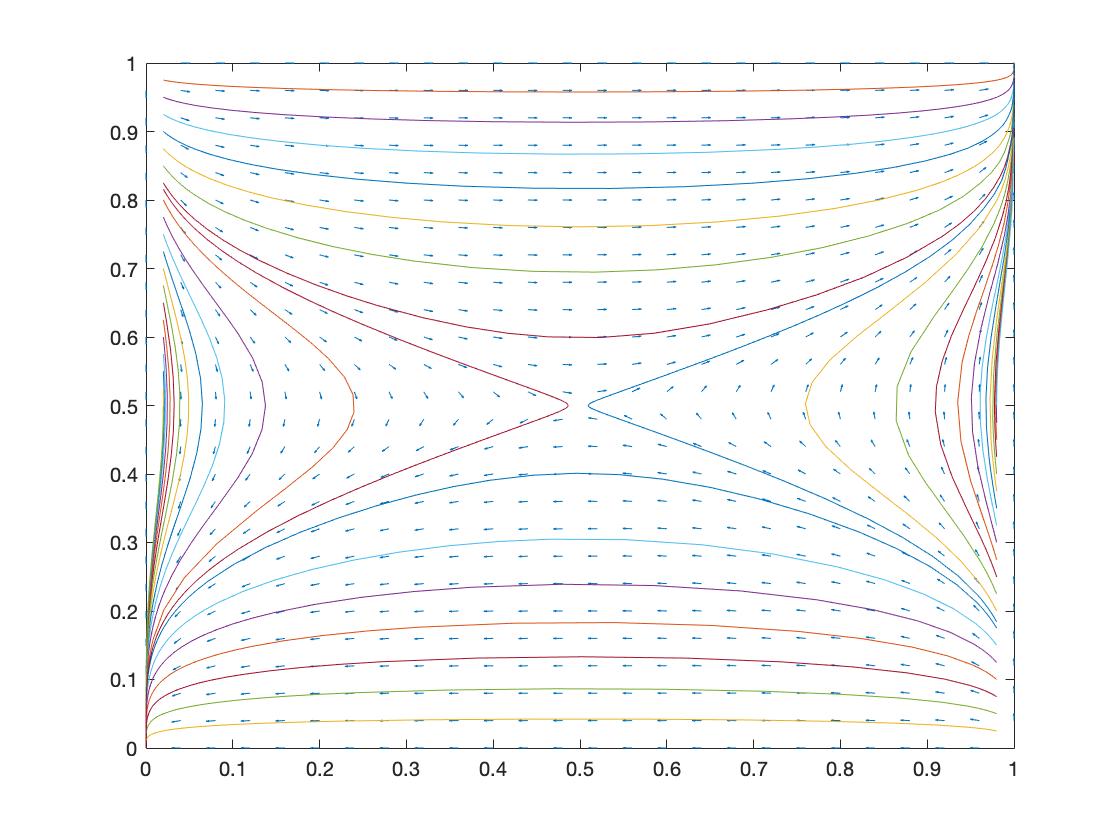}}
\subfloat[][$\phi'_1>0, \phi_2'<0$]
{\includegraphics[height=%
0.25\textheight]{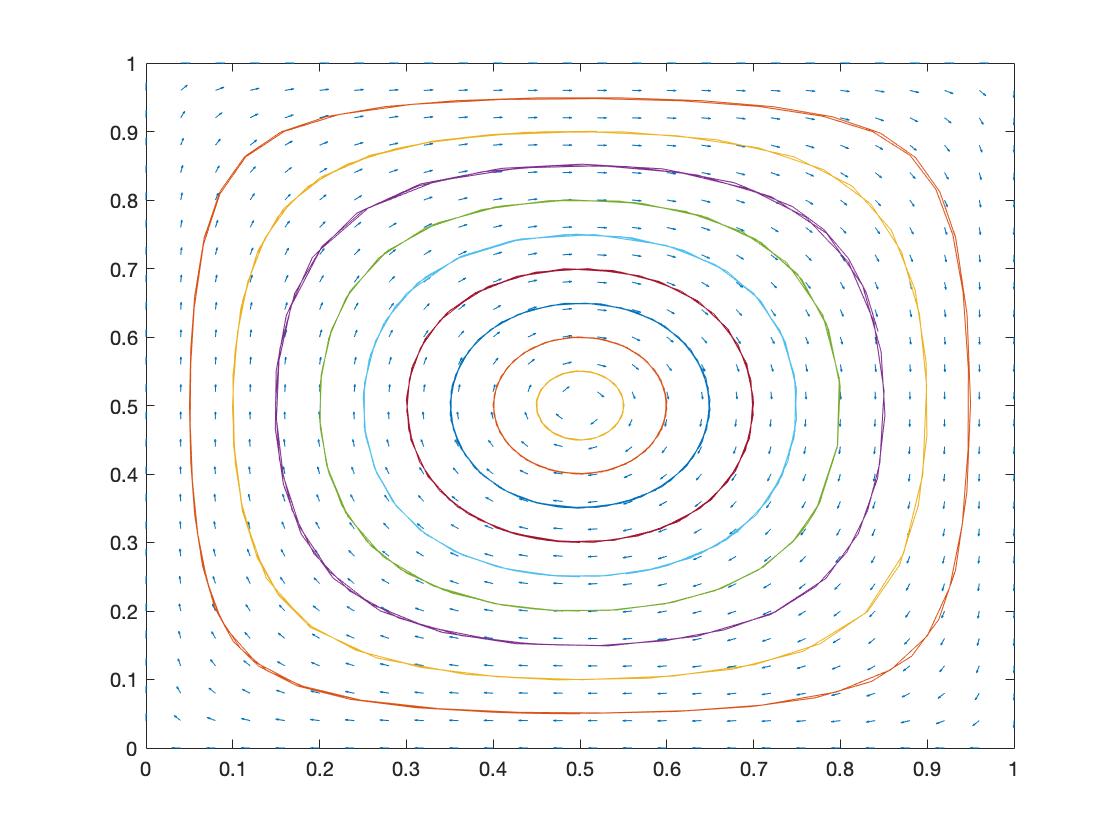}}
\caption{Trajectories of the macroscopic system for  $\phi_1$, $\phi_2$ monotonic.}
\end{figure}

\begin{figure}[t]
\centering
\subfloat[][$\phi_1=\phi_2$]
{\includegraphics[height=%
0.25\textheight]{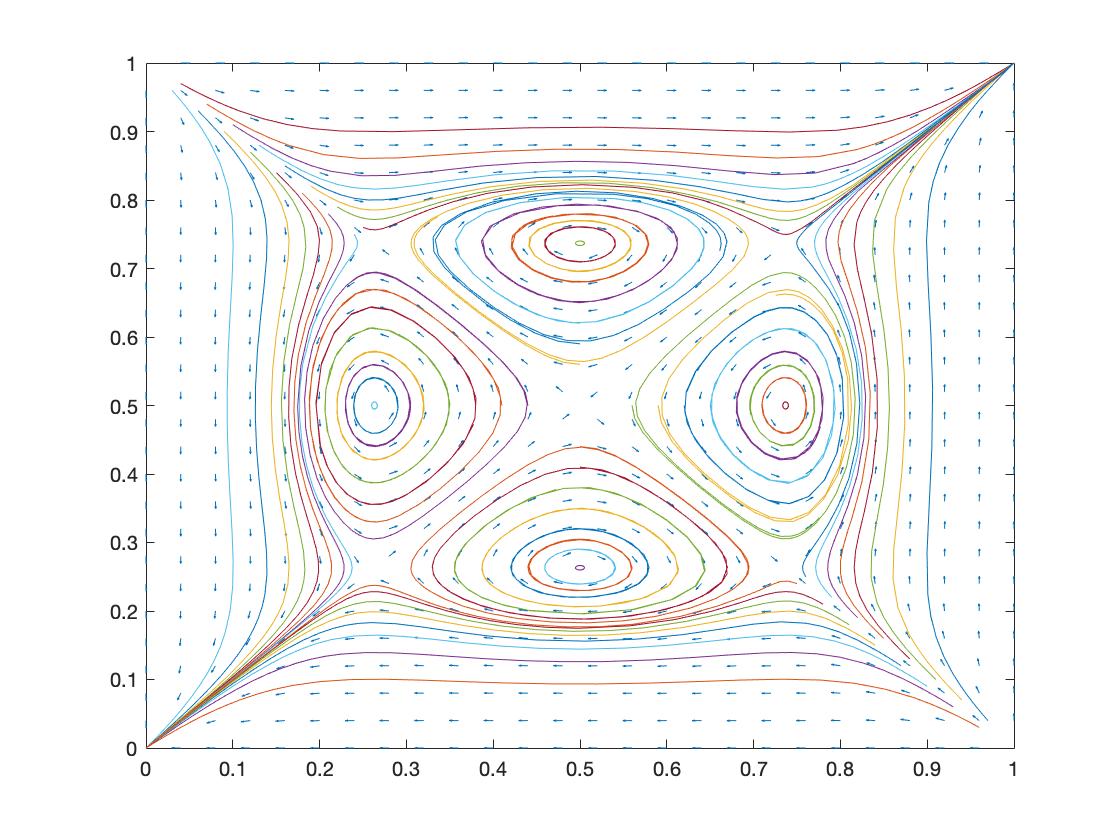}}
\subfloat[][$\phi_1=-\phi_2$]
{\includegraphics[height=%
0.25\textheight]{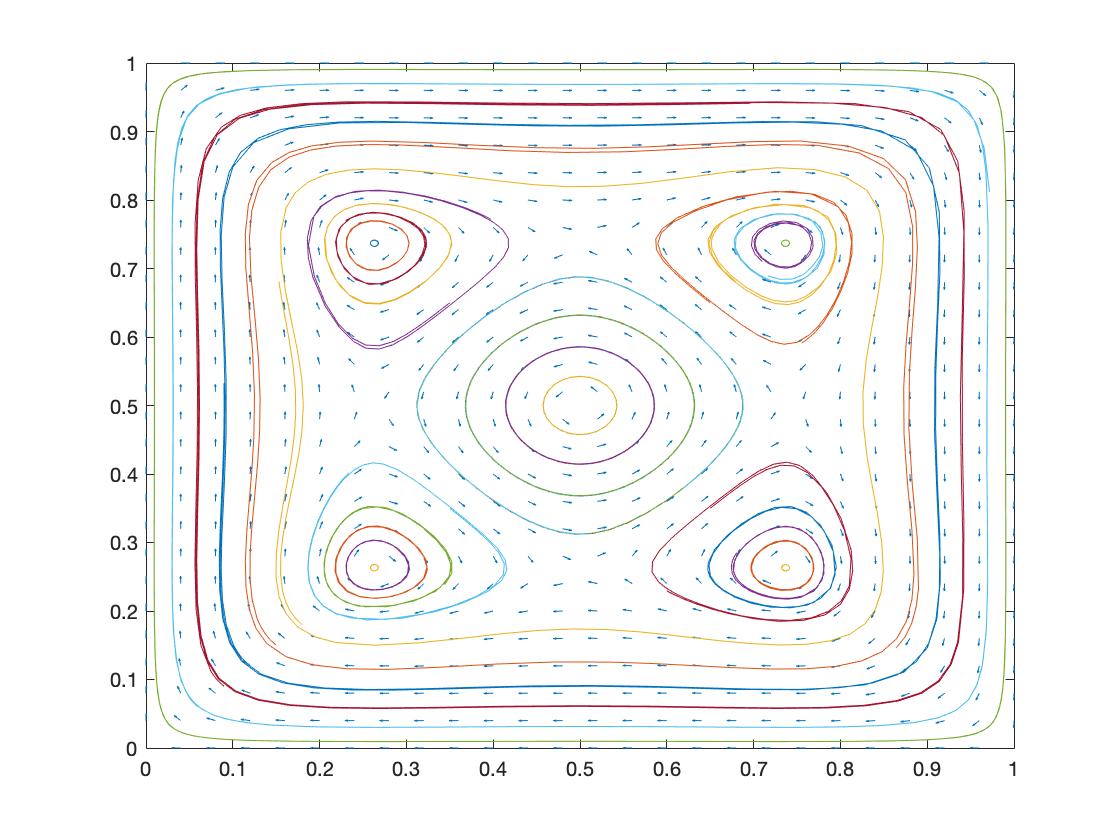}}
\caption{Trajectories of the macroscopic system for $\phi_1$, $\phi_2$ non monotonic.}
\end{figure}


\section{Propagation of chaos}
In this section we prove \emph{propagation of chaos}, i.e., as $N\to \infty$ particles of both families behave independently according to the evolution  (\ref{macrosigma}) with transition rates depending on the 
solution $(m_1, m_2)$ of equation (\ref{macro}). 
To this purpose, we will use a coupling  technique following the approach of \cite{graham1992mckean} and \cite{Andreis}.

\begin{Def}
Let $(E,d)$ be a Polish space, $\mu$ a probability measure on $E$ and, for each $N\geq 1$, let 
$\mu^N$ be a probability measure on $E^N$. 
For a fixed integer $n$, denote by $\mu_{1,\ldots , n}^N$ the marginal distribution of $\mu^N$ over the first $n$ components. 
The sequence $\{\mu^N; N\geq 1\}$ is said to be $\mu$\emph{-chaotic} if,  for each $N$, $\ \mu^N$ is permutation invariant and
for every $n<N$  the sequence $\{\mu_{1,\ldots , n}^N; N\geq 1\}$ converges weakly to  the product measure $\mu^{\otimes n}$ as $N\to \infty$.\\ 
We say that \emph{propagation of chaos holds} for a sequence of  random vectors $\{X^N; N\geq 1\}$, where $X^N$ takes values on $E^N$, if the sequence of their distributions is $\mu$-chaotic for some probability $\mu$ on $E$.  
\end{Def}
A stronger notion of chaoticity uses convergence with respect to the Wasserstein distance, which implies weak convergence. 
Let $\mathcal{M}_1(E)$ be the set of probability measures on $E$ with finite first moment. The Wasserstein metric on $\mathcal{M}_1(E)$ is defined by: 
$$W_d^1(\mu,\nu)= \inf\left\{\int d(x,y) \pi(\de x, \de y): \pi \mbox{ has marginals } \mu \mbox{ and } \nu\right\}.$$ 
For $n\geq 1$ and $T>0$, we call $\rho_n$ and $\rho_{n,T}$ the Wasserstein distances $W^1_{\| \cdot \|}$ on $\mathcal{M}_1(\mathbb{R}^n) $  and $W_{\| \cdot \|_{\infty}}^1$ on $\mathcal{M}_1(\mathcal{D}([0,T]; \mathbb{R}^n))$ respectively, where $\| \cdot \|$ denotes the $L^1$ metric on $\mathbb{R}^n$ and $\| \cdot \|_{\infty}$ denotes the uniform metric on the Skorohod space of càdlàg functions  $\mathcal{D}([0,T]; \mathbb{R}^n)$. 

\begin{Def}
 Let $\mu\in \mathcal{M}_1(\mathbb{R})$ (respectively,  $\mu\in \mathcal{M}_1(\mathcal{D}([0,T]; \mathbb{R}))$). 
We say that a sequence $\{\mu^N; N\geq 1\}$ of permutation invariant probability measures  (or, equivalently, a sequence of random vectors $\{X^N; N\geq 1\}$, where $X^N$ has distribution $\mu^N$) on $\mathbb{R}^N$
 (resp. $\mathcal{D}([0,T]; \mathbb{R}^N)$)
 is  $\mu$\emph{-chaotic in} $W^1$, if, for each $n\geq 1$, the sequence $\{\mu^N_{1,\ldots, n}\}$ converges to $\mu^{\otimes n}$ with respect to the metric $\rho_n$ (resp. $\rho_{n, T}$). 
\end{Def}
Now, consider the following SDE on $(\Omega, \mathcal{F}, \{\mathcal{F}_t\}_t, P)$: 

\begin{equation}\label{equation macro poisson}
\de \bar{\bm{\upvarsigma}}_{i,k}^N(t)= \int_0^\infty\psi(\bar{\bm{\upvarsigma}}_{i,k}^N(t-))\mathbbm{1}_{(0,\lambda (\bar{\bm{\upvarsigma}}_{i,k}^N(t-), m_k(t-), m_{k'}(t-))]}(u)\mathcal{N}^{i,k}(\de u,\de t)
\end{equation}

with $i=1,\ldots , N$, $k=1,2$, $k'=3-k$, 
where  $(m_1, m_2)$ is the solution of equation \eqref{macro}, the Poisson random measures  $\mathcal{N}^{i,k},  i=1,\ldots , N_k, k=1,2$ are the same of equation \eqref{equation micro poisson}, the jump rate function is given by
\begin{equation*}
\lambda(\sigma_{i,k},m_k,m_{k'})= (1-\sigma_{i,k})r_km_k\phi_k(r_{k'}m_{k'}) +  \sigma_{i,k}r_k(1-m_k)\phi_k(r_{k'}(1-m_{k'}))
\end{equation*}
and the jump amplitude is 
\begin{equation*}
\psi(\sigma_{i,k})=1-2\sigma_{i,k}.
\end{equation*}

The solution $\bar{\usigma}^N$ of equation \eqref{equation macro poisson} is given by a system of $N=N_1+N_2$ particles evolving independently on $\{0,1\}$ with jump rates \eqref{macrosigma} and it is coupled with the solution $\usigma^N$ of equation \eqref{equation micro poisson} through the random measures $(\mathcal{N}^{i,k})$.  
Such a coupling allows to prove propagation of chaos for the sequence $\{\usigma^N; N>1\}$. 

\begin{prop}\label{prop chaos}
For $N>1$, let $\{\usigma^N_k=(\usigma^N_{i,k})_{1\leq i\leq N_k}; k=1,2\}$ and  $\{\bar{\usigma}^N_k=(\bar{\usigma}_{i,k}^N)_{1\leq i\leq N_k}; k=1,2\}$ be the solutions of the microscopic equation \eqref{equation micro poisson} and  the macroscopic equation \eqref{equation macro poisson} respectively, with initial conditions $\usigma^N_k(0)$ and $\bar{\usigma}^N_k(0),\ k=1,2$, independent of the family of Poisson random measures $\mathcal{N}$.  Denote by $\mu_{[0,T]}^{(k)}$ the law of $\{\bar{\usigma}^N_{1,k}(t)\}_{t\in [0,T]}$. \\ 
Assume that, for $k=1,2$,  $(\bar{\usigma}_{i,k}^N(0))_{1\leq i\leq N_k}$ are i.i.d. with common distribution $\mu_{0}^{(k)}$ on $\{0,1\}$, 
$\{\usigma^N_k(0); N>1\}$ is $\mu_{0}^{(k)}$-chaotic in $W^1$ and  
$\lim_{N\to\infty}E\big[ |\usigma_{i,k}^N(0)-\bar{\usigma}_{i,k}^N(0)|\big] =0$. Then, for $k=1,2$ and for any $T>0$, the sequence $\{\{\usigma^N_k(t)\}_{t\in [0,T]}; N>1\}$ is $\mu_{[0,T]}^{(k)}$-chaotic in $W^1$. 
\end{prop}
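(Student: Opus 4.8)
Here is the plan.

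\medskip

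The plan is to run a coupling argument in the spirit of \cite{graham1992mckean} and \cite{Andreis}, the whole proof reducing to a Gr\"onwall estimate for
\[
\gamma^N_k(t):=E\Big[\sup_{s\le t}\big|\usigma^N_{i,k}(s)-\bar{\usigma}^N_{i,k}(s)\big|\Big],\qquad k=1,2,
\]
which, by the symmetry of the dynamics and of the initial data, does not depend on the label $i$. First I would record two facts about the coupled system $\bar{\usigma}^N$. Since the $\mathcal{N}^{i,k}$ are i.i.d.\ and the $\bar{\usigma}^N_{i,k}(0)$, $1\le i\le N_k$, are i.i.d.\ with law $\mu^{(k)}_0$, the paths $\bar{\usigma}^N_{1,k},\dots,\bar{\usigma}^N_{N_k,k}$ are i.i.d.\ with common law $\mu^{(k)}_{[0,T]}$; hence for $n<N_k$ the law of $(\bar{\usigma}^N_{1,k},\dots,\bar{\usigma}^N_{n,k})$ is exactly $(\mu^{(k)}_{[0,T]})^{\otimes n}$. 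Moreover $t\mapsto E[\bar{\usigma}^N_{1,k}(t)]$ satisfies the same affine (hence uniquely solvable) ODE as $m_k$ with the same initial value $\mu^{(k)}_0(\{1\})=m_k(0)$ — this equality being forced by $\mu^{(k)}_0$-chaoticity of $\usigma^N_k(0)$ together with the convergence of $\bm{m}^N_k(0)$ — so that $E[\bar{\usigma}^N_{1,k}(t)]=m_k(t)$ for all $t$. Using the coupling through the $\mathcal{N}^{i,k}$ as an admissible coupling in the definition of $\rho_{n,T}$ and bounding the cost between the two $n$-tuples of paths by $\sum_{i\le n}\sup_{s\le T}|\usigma^N_{i,k}(s)-\bar{\usigma}^N_{i,k}(s)|$, one gets $\rho_{n,T}\big(\mathrm{law}(\usigma^N_{1,k},\dots,\usigma^N_{n,k}),(\mu^{(k)}_{[0,T]})^{\otimes n}\big)\le n\,\gamma^N_k(T)$, so it is enough to prove $\gamma^N_k(T)\to0$.

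To this end I would subtract the integral forms of \eqref{equation micro poisson} and \eqref{equation macro poisson}, apply $\big|a+\int f\,\de\mathcal{N}\big|\le|a|+\int|f|\,\de\mathcal{N}$, take $\sup_{s\le t}$ (the driving term is nondecreasing in the upper limit of integration) and then expectation, compensating each $\mathcal{N}^{i,k}$ by $\ell\otimes\ell$, which gives
\[
\gamma^N_k(t)\le E\big|\usigma^N_{i,k}(0)-\bar{\usigma}^N_{i,k}(0)\big|+\int_0^t E\Big[\int_0^\infty\big|\psi(\usigma^N_{i,k}(s))\mathbbm{1}_{(0,\lambda^N]}(u)-\psi(\bar{\usigma}^N_{i,k}(s))\mathbbm{1}_{(0,\lambda]}(u)\big|\,\de u\Big]\de s.
\]
The inner $\de u$-integral I would evaluate by cases: on $\{\usigma^N_{i,k}(s)=\bar{\usigma}^N_{i,k}(s)\}$ it equals $|\lambda^N(\usigma^N_{i,k}(s),\bm{m}^N_k(s),\bm{m}^N_{k'}(s))-\lambda(\usigma^N_{i,k}(s),m_k(s),m_{k'}(s))|$, which — by boundedness and Lipschitz continuity of the $\phi_k$ and $N_k/N\to r_k$ — is $\le\varepsilon_N+C\big(|\bm{m}^N_k(s)-m_k(s)|+|\bm{m}^N_{k'}(s)-m_{k'}(s)|\big)$ with $\varepsilon_N\to0$ non-random; on $\{\usigma^N_{i,k}(s)\ne\bar{\usigma}^N_{i,k}(s)\}$ the opposite signs $\psi(\usigma)=-\psi(\bar{\usigma})$ make it equal to $\lambda^N+\lambda\le C$, hence it is $\le C\,\mathbbm{1}_{\{\usigma^N_{i,k}(s)\ne\bar{\usigma}^N_{i,k}(s)\}}$, whose expectation is $\le\gamma^N_k(s)$. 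Finally the empirical-mean error is split as $|\bm{m}^N_k(s)-m_k(s)|\le|\bm{m}^N_k(s)-\bar{\bm{m}}^N_k(s)|+|\bar{\bm{m}}^N_k(s)-m_k(s)|$, with $\bar{\bm{m}}^N_k:=\frac1{N_k}\sum_{j}\bar{\usigma}^N_{j,k}$: the first term is $\le\frac1{N_k}\sum_j|\usigma^N_{j,k}(s)-\bar{\usigma}^N_{j,k}(s)|$, of expectation $\le\gamma^N_k(s)$, and the second has expectation $\le(2\sqrt{N_k})^{-1}$ by the variance bound for the i.i.d.\ average $\bar{\bm{m}}^N_k(s)$, whose mean is $m_k(s)$.

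Collecting all this for $\Gamma^N(t):=\gamma^N_1(t)+\gamma^N_2(t)$ yields $\Gamma^N(t)\le\Gamma^N(0)+\int_0^t\big(C'\,\Gamma^N(s)+\varepsilon'_N\big)\de s$ with $\Gamma^N(0)=\sum_{k=1,2}E|\usigma^N_{i,k}(0)-\bar{\usigma}^N_{i,k}(0)|\to0$ (hypothesis) and $\varepsilon'_N\to0$, and Gr\"onwall's lemma gives $\sup_{t\le T}\Gamma^N(t)\le(\Gamma^N(0)+\varepsilon'_N T)e^{C'T}\to0$; hence $\gamma^N_k(T)\to0$, and by the first paragraph this is the stated $W^1$-chaoticity. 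The point I expect to need the most care is the uniform-in-time control: one cannot prove fixed-time estimates and afterwards insert a supremum, because the "mismatch" term $\mathbbm{1}_{\{\usigma^N_{i,k}(s)\ne\bar{\usigma}^N_{i,k}(s)\}}$ carries an $O(1)$ (not $o(1)$) prefactor and can only be absorbed inside the Gr\"onwall loop, via $E[\mathbbm{1}_{\{\usigma^N_{i,k}(s)\ne\bar{\usigma}^N_{i,k}(s)\}}]\le\gamma^N_k(s)$; one must also make sure the coupling is arranged so that the law of large numbers for $\bar{\bm{m}}^N_k$ is anchored at $m_k$, i.e.\ $\mu^{(k)}_0(\{1\})=m_k(0)$.
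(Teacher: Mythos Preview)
Your proposal is correct and follows essentially the same coupling/Gr\"onwall route as the paper: subtract the two Poisson-driven integral equations, take supremum and expectation, control the $\de u$-integral by Lipschitz/boundedness of the rates, split $|\bm{m}^N_k-m_k|$ through the i.i.d.\ empirical mean $\bar{\bm{m}}^N_k$, sum over the two families, and close with Gr\"onwall. The only cosmetic differences are that the paper decomposes the integrand via the add--subtract triangle inequality $|\psi(\usigma)\mathbbm{1}_{(0,\lambda^N]}-\psi(\bar{\usigma})\mathbbm{1}_{(0,\bar{\lambda}]}|\le |\psi(\usigma)|\,|\mathbbm{1}_{(0,\lambda^N]}-\mathbbm{1}_{(0,\bar{\lambda}]}|+|\psi(\usigma)-\psi(\bar{\usigma})|\,\mathbbm{1}_{(0,\bar{\lambda}]}$ rather than your case analysis on $\{\usigma=\bar{\usigma}\}$, and it invokes the law of large numbers for $\bar{\bm{m}}^N_k$ qualitatively rather than with your explicit $(2\sqrt{N_k})^{-1}$ variance bound; your explicit check that $E[\bar{\usigma}^N_{1,k}(t)]=m_k(t)$ via the linear ODE (hence the anchoring $\mu^{(k)}_0(\{1\})=m_k(0)$) is a point the paper leaves implicit.
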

 
\begin{proof}
Let us fix $k\in \{1,2\}$. Clearly, for each $N>1$, the distribution of $\{\usigma^N_k(t)\}_{t\in [0,T]}=\{(\usigma^N_{i,k}(t))_{1\leq i\leq N_k}\}_{t\in [0,T]}$ is permutation invariant. 
To prove chaoticity in $W^1$ it is enough to prove that, for any $T>0$ and $i\in \{1,\ldots, N_k\}$ we have:
\begin{equation} \label{convergence of mean value}
E\Big[ \sup_{t\in[0,T]}\left|\usigma_{i,k}^N(t)-\bar{\usigma}_{i,k}^N(t)\right|\Big]\xrightarrow{N\to\infty} 0.
\end{equation}

For a shorter notation we write:
\begin{eqnarray*}
\bm{\lambda}^N_{i,k}(s-)&:=&\lambda^N(\usigma_{i,k}^N(s^-),\bm{m}_k^N(s^-), \bm{m}_{k'}^N(s^-)),\\
\bar{\bm{\lambda}}_{i,k}(s-)&:=& \lambda (\bar{\usigma}_{i,k}^N(s^-), m_k(s^-), m_{k'}(s^-)),\\
\Lambda^N_{i,k}&:=& E\big[ |\usigma_{i,k}^N(0)-\bar{\usigma}_{i,k}^N(0)|\big].
\end{eqnarray*}

For any $t\geq 0$ we have:   
\begin{eqnarray*}
& &\sup_{r\in[0,t]}|\usigma_{i,k}^N(r)-\bar{\usigma}_{i,k}^N(r)|\leq  |\usigma_{i,k}(0)^N-\bar{\usigma}_{i,k}^N(0)|\\ 
& &\qquad+ \int_0^t\int_0^\infty\left|\psi(\usigma_{i,k}^N(s-))\mathbbm{1}_{(0,\bm{\lambda}^N_{i,k}(s-)]}(u) 
- \psi(\bar{\usigma}_{i,k}^N(s-))\mathbbm{1}_{(0,\bar{\bm{\lambda}}_{i,k}(s-)]}(u)\right|\mathcal{N}^{i,k}(\de u,\de s). 
\end{eqnarray*}

Taking the expectation on both sides, and recalling that the compensator of $\mathcal{N}^{i,k}$ is given by the Lebesgue measure, we obtain:

\begin{multline}\label{SupSigma}
E\Big[\sup_{r\in[0,t]}\left|\usigma_{i,k}^N(r)-\bar{\usigma}_{i,k}^N(r)\right| \Big]\leq \Lambda_{i,k}^N\\
+E\left[ \int_0^t\int_0^\infty\left|\psi(\usigma_{i,k}^N(s-))\mathbbm{1}_{(0,\bm{\lambda}^N_{i,k}(s-)]}(u)
- \psi(\bar{\usigma}_{i,k}^N(s-))\mathbbm{1}_{(0,\bar{\bm{\lambda}}_{i,k}(s-)]}(u)\right|\de u\de s\right]. 
\end{multline}

Consider the integral in the expectation above 
\begin{multline*}
\int_0^t\int_0^\infty\left|\psi(\usigma_{i,k}^N(s-))\mathbbm{1}_{(0,\bm{\lambda}^N_{i,k}(s-)]}(u)
- \psi(\bar{\usigma}_{i,k}^N(s-))\mathbbm{1}_{(0,\bar{\bm{\lambda}}_{i,k}(s-)]}(u)\right|\de u\de s \\
\leq \int_0^t\int_0^\infty 
\left|\psi(\usigma_{i,k}^N(s-))\big(\mathbbm{1}_{(0,\bm{\lambda}^N_{i,k}(s-)]}(u)
- \mathbbm{1}_{(0,\bar{\bm{\lambda}}_{i,k}(s-)]}(u)\big)\right|\de u\de s   \\ 
+\int_0^t\int_0^\infty 
\left|\psi(\usigma_{i,k}^N(s-))
- \psi(\bar{\usigma}_{i,k}^N(s-))\right|\mathbbm{1}_{(0,\bar{\bm{\lambda}}_{i,k}(s-)]}(u) \de u\de s.
\end{multline*}

Observe that: 
\begin{eqnarray*}
& & \left|\bm{\lambda}^N_{i,k}(s-)-\bar{\bm{\lambda}}_{i,k}(s-)\right|\leq 
2 \|\phi_k\|_{\infty}\Big\{\left| \usigma_{i,k}^N(s-)
- \bar{\usigma}_{i,k}^N(s-) \right|+ 
\sum_{h=1,2} \Big|\frac{N_h}{N}\bm{m}^N_h(s-)-r_h m_h(s-) \Big|\Big\}\nonumber\\ & &\hspace{4cm}+ \|\phi_k\|_{\infty}\sum_{h=1,2} \Big|\frac{N_h}{N}-r_h \Big| 
\end{eqnarray*}
and 
\begin{eqnarray*}
\bar{\bm{\lambda}}_{i,k}(s-)\left|\psi(\usigma_{i,k}^N(s-))
- \psi(\bar{\usigma}_{i,k}^N(s-))\right| 
\leq 2\|\phi_k \|_\infty |\usigma_{i,k}^N(s-)-\bar{\usigma}_{i,k}^N(s-) |. 
\end{eqnarray*}

Then, from \eqref{SupSigma} we obtain: 
\begin{multline}\label{SupSigma2}
E\Big[\sup_{r\in[0,t]}\left|\usigma_{i,k}^N(r)-\bar{\usigma}_{i,k}^N(r)\right| \Big]\leq \Lambda_{i,k}^N
+4 \|\phi_k \|_\infty \int_0^t E\big[\left|\usigma_{i,k}^N(s-)-\bar{\usigma}_{i,k}^N(s-)\right| \big]\de s\\
+ 2 \|\phi_k \|_\infty \int_0^t \sum_{h=1,2} E\Big[\Big|\frac{N_h}{N}\bm{m}^N_h(s-)-r_h m_h(s-) \Big|\Big]\de s 
+ t \|\phi_k \|_\infty  \sum_{h=1,2} \Big|\frac{N_h}{N}-r_h \Big|. 
\end{multline}

Moreover, if we set $\bar{\bm{m}}_k^N=\frac{1}{N_k}\sum_{j=1}^{N_k}\bar{\usigma}_{j,k}^N$ we have:

\begin{multline} \label{triangular inequality1}
E\Big[ \Big|\frac{N_{k}}{N}\bm{m}_{k}^N(s-)-r_{k}m_{k}(s-)\Big|\Big] \leq E\Big[ \Big|\frac{N_{k}}{N}\bm{m}_{k}^N(s-)-\frac{N_k}{N}\bar{\bm{m}}_k^N(s-)\Big|\Big] +E\Big[ \Big|\frac{N_{k}}{N}\bar{\bm{m}}_{k}^N(s-)-r_{k}m_{k}(s-)\Big|\Big]\\
\leq \frac{N_k}{N} \frac{1}{N_k}\sum_{j=1}^{N_k}E\Big[ \Big|\usigma_{j,k}^N(s-)-\bar{\usigma}_{j,k}^N(s-)\Big| \Big]+E\Big[ \Big|\frac{N_{k}}{N}\bar{\bm{m}}_{k}^N(s-)-r_{k}m_{k}(s-)\Big|\Big]\\
=  \frac{N_k}{N}E\Big[\Big| \usigma_{i,k}^N(s-)-\bar{\usigma}_{i,k}^N(s-)\Big| \Big]+E\Big[ \Big|\frac{N_{k}}{N}\bar{\bm{m}}_{k}^N(s-)-r_{k}m_{k}(s-)\Big|\Big]
\end{multline}
where the last equality holds by symmetry. 

Now, fix $i_1\in \{1,\ldots N_1\}$ and $i_2\in \{1,\ldots, N_2\}$. Using \eqref{SupSigma2} and \eqref{triangular inequality1} we obtain:  

\begin{multline*}
\sum_{h=1,2} 
E\Big[\sup_{r\in[0,t]}\left|\usigma_{i_h,h}^N(r)-\bar{\usigma}_{i_h,h}^N(r)\right| \Big]\leq
8C \int_0^t \sum_{h=1,2}E\Big[\sup_{r\in [0,s]}\left|\usigma_{i_h,h}^N(r)-\bar{\usigma}_{i_h,h}^N(r)\right| \Big]\de s\\
+ \sum_{h=1,2}\left\{\Lambda_{i_h,h}^N + 2 t C \Big|\frac{N_h}{N}-r_h \Big|   + 4 C \int_0^t E\Big[ \Big|\frac{N_{h}}{N}\bar{\bm{m}}_{h}^N(s-)-r_{k}m_{h}(s-)\Big|\Big]\de s\right\}
\end{multline*}
where $C=\|\phi_1\|_{\infty}\vee \|\phi_2\|_{\infty}$. 
By Gronwall inequality we have:

\begin{multline*}
\sum_{h=1,2} 
E\Big[\sup_{r\in[0,t]}\left|\usigma_{i_h,h}^N(r)-\bar{\usigma}_{i_h,h}^N(r)\right| \Big] \\
\leq e^{8Ct}\sum_{h=1,2}\left\{\Lambda_{i_h,h}^N + 2 t C \Big|\frac{N_h}{N}-r_h \Big|   + 4 C \int_0^t E\Big[\Big|\frac{N_{h}}{N}\bar{\bm{m}}_{h}^N(s-)-r_{h}m_{k}(s-)\Big|\Big]\de s\right\}.
\end{multline*}

By the hypothesis on $\Lambda_{i_h,h}^N$  
and the law of large numbers for $\{\bar{\bm{m}_h}^N; N>1\}$, choosing $i_h=i$ for $h=k$ and $t=T$ in the above inequality 
we obtain \eqref{convergence of mean value}.

\end{proof}

\section{Cooperative vs. competitive families:  fluctuations around the mean field limit}
From now on we focus on the case when $\phi_1^\prime \phi_2^\prime <0$. 
Our aim in next sections is to investigate how the microscopic dynamics fluctuates around its mean field approximation before it reaches its absorbing states.  
As observed in section \ref{section2}, the macroscopic system has an Hamiltonian $H$ that is conserved on the mean-field orbits  $\mathcal{C}_k, k\in (-\infty,0)$. Then, $H$ may be considered as a radial coordinate and we can change variables in such a way to represent the system through ''action-angle'' variables $(H, \Theta)$ (see \cite{arnold}).\smallskip\\
\noindent Consider the macroscopic equation \eqref{macro} for $(m_1, m_2)\in (0,1)^2$. Even though our results will be proved for general monotonic functions $\phi_1, \phi_2$, let us restrict for the moment to a simpler case for which we can write explicit formulas. Set  
\begin{equation}\label{Linearphi}
\phi_1(z)=az+b_1,\quad\quad \phi_2(z)=-az+b_2
\end{equation}
where $a>0$ and $b_1, b_2$ are such that the two functions are positive. In this case $\psi_1(z)= r_2a(2z-1)$ and $\psi_2(z)=-r_1a(2z-1)$ and the Hamiltonian is given by $H(m_1,m_2)=a\ln\big(m_1(1-m_1)m_2(1-m_2)\big)$.\\
We first change variables in order to shift the point $(\frac{1}{2}, \frac{1}{2})$ at the origin setting $x=m_1-\frac{1}{2},\ y=m_2-\frac{1}{2}$. For $(x, y)\in \left(-\frac{1}{2}, \frac{1}{2}\right)^2\setminus \{(0,0)\}$, the macroscopic equation (\ref{macro}) becomes:
\begin{equation}\label{macro2}
 \begin{array}{cc} 
\dot{x}=& 2ar_1r_2y(\frac{1}{4}-x^2)\\
\dot{y}=& -2ar_1r_2x(\frac{1}{4}-y^2)
\end{array}
\end{equation}
Taking the equivalent Hamiltonian $e^{a^{-1}H(m_1,m_2)}$
and, with an abuse of notation, denoting it again by $H$ we consider the change of variables given by: 
\begin{eqnarray}
H(x,y)&=& \left(\frac{1}{4}-x^2\right)\left(\frac{1}{4}-y^2\right);\label{H(x,y) in 0,0}\\
\Theta(x,y)&=&\left\{ \begin{array}{ccc}
\arctan\frac{y}{x}  &\mbox{if } x>0, y\geq 0,\\
\frac{\pi}{2}   &\mbox{if }   x=0, y>0,\\
\arctan\frac{y}{x}+\pi   &\mbox{if }  x<0, \\
\frac{3\pi}{2} &\mbox{if } x=0,\ y<0,\\
\arctan\frac{y}{x}+2\pi  &\mbox{if }\ x>0,\ y<0\\\label{thetaMacro}
\end{array}
\right.
\end{eqnarray}
where $H\in \left(0, \frac{1}{16}\right),\  \Theta \in \mathbb{R}/2\pi \mathbb{Z}$. The derivative of $\Theta$ is given by
\begin{equation*}
\dot{\Theta}=\frac{1}{1+(\frac{y}{x})^2}\left(-\frac{y}{x^2}\dot{x}+\frac{1}{x}\dot{y}\right) = -\frac{2ar_1r_2}{1+(\frac{y}{x})^2}\left[ \left(\frac{y}{x}\right)^2\left(\frac{1}{4}-x^2\right)+\left(\frac{1}{4}-y^2\right)\right] .
\end{equation*}
For $x=0$ we have $y=\pm \frac{1}{2}\sqrt{1-16H}$. For $x\neq 0$, replacing $y^2=x^2\tan^2 \Theta$ in (\ref{H(x,y) in 0,0}) and recalling that $x^2\in \left(0, \frac{1}{4}\right)$, we get 
\begin{eqnarray*}
x^2=& \frac{1}{2\tan^2\Theta}\Big( \frac{\tan^2\Theta+1}{4}-\sqrt{(\frac{\tan^2\Theta+1}{4})^2-4\tan^2\Theta(\frac{1}{16}-H)}\Big)
\end{eqnarray*}    
from which we obtain: 
\begin{eqnarray*}
x^2&=& \frac{1}{8\sin^2\Theta}\Big(1-\sqrt{1-\sin^2(2\Theta)(1-16H)}\Big);\\
y&=& x\tan \Theta.
\end{eqnarray*}
Equation (\ref{macro2}) in the new coordinates is thus given by:
\begin{equation*}
\left\{\begin{array}{ll}
\dot{H}=0\\
\dot{\Theta}=-\frac{ar_1r_2}{2}\sqrt{1-\sin^2(2\Theta)(1-16H)}
\end{array}\right.
\end{equation*}
For each fixed $H\in (0, \frac{1}{16})$, the second equation can be solved by separating variables to obtain $F(2\Theta | 1-16H)=-ar_1r_2 t+ k(\Theta_0)$ , where $F(\varphi | m)=\int_0^\varphi \frac{d\theta}{\sqrt{1-m\sin^2\theta}}$ is the Legendre's elliptic integral of the first kind with amplitude $\varphi$ and parameter $m$, and $k(\Theta_0)$ denotes a constant which depends on the initial condition.  
The solution is then given by $\Theta(t)=-\frac{1}{2}\mathrm{am}_{1-16H}(ar_1r_2 t+ k(\Theta_0))$, with $\mathrm{am}_m(u)$ denoting the inverse function to $F(\varphi | m)$, known as \emph{Jacobi amplitude function} (see \cite{Whittaker}).\\
Note that, if we pose $F(H, \Theta)=\frac{ar_1r_2}{2}\sqrt{1-\sin^2(2\Theta)(1-16H)}$, for each fixed $H=h$ with $h\in (0, 1/16)$, 
equation $\dot{\Theta}=-F(h, \Theta)$ 
  generates an ergodic dynamical system with invariant distribution  
$\mu^h(\de \Theta)=\frac{1}{T(h)F(h,\Theta)} \de \Theta$, where 
$T(h)=\int_0^{2\pi} \frac{1}{F(h,\Theta)} \de \Theta $  is the period of the motion. \smallskip\\
The above representation suggests that the microscopic dynamics 
may be described in terms of a slow motion
(of the microscopic variable corresponding to $H$) and a faster one (of the variable corresponding to $\Theta$).  In particular, 
assuming that the fast motion has an invariant distribution $\mu^x$ for each fixed value $x$ of the slow component $H$, we expect that on the ''larger''  time scale at which the slow motion of $H$ is observable, the fast variable $\Theta$ \emph{averages out}. This means that on such time scale, for $N$ large enough, the dynamics becomes essentially one dimensional, being described by $H$, and its dependence on $\Theta$  should appear as an integral with respect to the measure $\mu^H$. 

\subsection{A change of variables for the microscopic system}
In the light of what we have discussed above, we shall give a new representation of the microscopic system by introducing two variables $(H, \Theta)$. The resulting markovian dynamics has a  generator whose form shows that such variables evolve on different time scales. 
\begin{prop}\label{prop:GENH}
For $N>1$, let $(\bm{x}^N, \bm{y}^N)$ be the process defined by  
 $\bm{x}^N(t)=\bm{m}_1^N(t)-1/2,\  \bm{y}^N(t)=\bm{m}_2^N(t)-1/2$ and let  $\varphi:D\to (-\infty, 0)\times \mathbb{R}/2\pi\mathbb{Z}$, with $D=\left(-\frac{1}{2}, \frac{1}{2}\right)^2-\{(0,0)\}$,  be the change of variables defined by $\varphi (x, y)=(H(x,y), \Theta(x,y))$, where $\Theta$ is the function defined in \eqref{thetaMacro} and 
\begin{equation*}
H(x, y)=\int_0^x \frac{\psi_2(\frac{1}{2}+z)}{r_1 (\frac{1}{4}-z^2)}\de z +\int_0^y \frac{-\psi_1(\frac{1}{2}+z)}{r_2 (\frac{1}{4}-z^2)}\de z. 
\end{equation*}
We pose $\varphi^{-1}(h,\theta)=(x(h,\theta), y(h,\theta))$.\\
Consider the process $(\bm{H}^N, \bm{\Theta}^N)$ defined by 
$(\bm{H}^N(t), \bm{\Theta}^N(t))=\varphi(\bm{x}^N(t\wedge\tau^N), \bm{y}^N(t\wedge\tau^N))$, where 
$\tau^N=\inf \{ t\geq 0 : (\bm{x}^N(t), \bm{y}^N(t))\notin D\}$.\\ 
Define , for any $\epsilon, \epsilon^\prime  >0$,  
$\tau_{-\frac{1}{\epsilon}, -\epsilon^\prime}^N=\inf \{t\geq 0: \bm{H}^N(t)\notin \left(-\frac{1}{\epsilon}, -\epsilon^\prime\right)\}$. Then, the stopped process 
$\left\{ \left( \bm{H}^N(t\wedge \tau_{-\frac{1}{\epsilon}, -\epsilon^\prime}^N), \bm{\Theta}^N(t\wedge \tau_{-\frac{1}{\epsilon}, -\epsilon^\prime}^N)\right)\right\}_{t\geq 0}$ has a generator of the form: 
\begin{multline}\label{generator micro001}
\mathcal{K}^{\epsilon,\epsilon^\prime}_Nf(h,\theta)= \frac{1}{N}
\left\{a^H(h,\theta) f_h(h,\theta)+a^{HH}(h,\theta)f_{hh}(h,\theta)+
a^{H\Theta}(h,\theta)f_{h\theta}(h,\theta) \right.\\
\left.\qquad  +\left[ N (-F(h,\theta)+o(1)) + G(h,\theta) \right]f_\theta(h,\theta) +a^{\Theta\Theta}(h,\theta)f_{\theta\theta}(h,\theta)\right\}\mathbbm{1}_{\left(-\frac{1}{\epsilon}, -\epsilon^\prime\right)}(h)  + o(1)
\end{multline}
where $f$ is a $\mathcal{C}^3$ function on $\left[-\frac{1}{\epsilon}, -\epsilon^\prime\right]\times \mathbb{R}/2\pi\mathbb{Z}$, $\lim_{N\to \infty}o(1)=0$ and $a^H, a^{HH}, F, G,$ $ a^{H\Theta}, a^{\Theta\Theta}$ are regular functions (at least $\mathcal{C}^1$) obtained by the coefficients in formula \eqref{GN0} by taking $x=x(h,\theta), y=y(h,\theta)$.  

\end{prop}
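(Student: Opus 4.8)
The plan is to start from the known generator $\mathcal{G}_N$ of the mean-field process $(\bm{m}_1^N,\bm{m}_2^N)$ (equivalently, of $(\bm{x}^N,\bm{y}^N)$), which acts on a test function $g$ on the lattice $E^N$ by four discrete difference operators with weights $N_k\tfrac{N_k}{N}(\tfrac14-u^2)\phi_k(\cdots)$ and increments $\pm 1/N_k$. I would first record this generator in the $(x,y)$-coordinates, calling it $\mathcal{G}_N$ as referenced by formula \eqref{GN0} (so I am implicitly assuming that display exists just below in the paper). The key move is then purely a change-of-variables computation: for $f\in\mathcal{C}^3$ on $[-\tfrac1\epsilon,-\epsilon']\times\mathbb{R}/2\pi\mathbb{Z}$, set $g(x,y)=f(\varphi(x,y))=f(H(x,y),\Theta(x,y))$ and expand each difference $g(x\pm\tfrac1{N_k},y)-g(x,y)$ and $g(x,y\pm\tfrac1{N_k})-g(x,y)$ by Taylor's theorem to third order in the increment, using that on the region $\{h\in(-\tfrac1\epsilon,-\epsilon')\}$ the map $\varphi$ and its inverse are $\mathcal{C}^3$ with all derivatives bounded (this is where the stopping at $\tau^N_{-1/\epsilon,-\epsilon'}$ and the indicator $\mathbbm{1}_{(-1/\epsilon,-\epsilon')}(h)$ enter — away from the separatrix and the center, $H$ stays in a compact set of regular values).

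Next I would organise the resulting terms by the power of $1/N_k$ (equivalently $1/N$, since $N_k/N\to r_k$) that multiplies them. The first-order terms produce $H_x,H_y,\Theta_x,\Theta_y$ contracted against the drift vector $(\tfrac1{N}$ times the antisymmetric combination of the four rates$)$; by construction $H$ was chosen so that the leading drift of $H$ vanishes — this is exactly the content of the identity $\tfrac{dH}{dt}(m_1,m_2)=0$ along \eqref{macro} established in Section~\ref{section2} — so the $H$-drift is only $O(1/N)$, yielding the coefficient $\tfrac1N a^H(h,\theta)$. For $\Theta$, the leading drift does \emph{not} vanish: it reproduces $-F(h,\theta)$ (the macroscopic angular velocity computed explicitly above, in the linear case $F(h,\theta)=\tfrac{ar_1r_2}{2}\sqrt{1-\sin^2(2\theta)(1-16h)}$, and in general the analogous expression from \eqref{macro}), multiplied by $N\cdot\tfrac1N = 1$ inside the braces after the global $\tfrac1N$ is factored out — which is why $\Theta$ is the fast variable. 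The second-order Taylor terms give the diffusion coefficients: the weights $N_k\cdot(1/N_k)^2 = 1/N_k \sim 1/(r_k N)$ times the rates, contracted with products of first derivatives of $H,\Theta$, produce $\tfrac1N$ times $a^{HH}f_{hh}+a^{H\Theta}f_{h\theta}+a^{\Theta\Theta}f_{\theta\theta}$; the $o(1)$ next to $-F(h,\theta)$ collects the discrepancy between $N_k/N$ and $r_k$ plus the second-order correction to the $\Theta$-drift, and $G(h,\theta)$ is the genuine $O(1)$ correction to that drift coming from the curvature of $\Theta$. The third-order Taylor remainder is $O(1/N_k^3)\cdot N_k = O(1/N^2)$, uniformly on the compact $h$-range, hence absorbed into the trailing $o(1)$.

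Finally I would verify the regularity claim — that $a^H,a^{HH},a^{H\Theta},a^{\Theta\Theta},F,G$ are at least $\mathcal{C}^1$ on $[-\tfrac1\epsilon,-\epsilon']\times\mathbb{R}/2\pi\mathbb{Z}$ — which follows because each is a polynomial in the bounded $\mathcal{C}^1$ functions $\phi_k,\phi_k'$ evaluated at $\mathcal{C}^1$ arguments and in the partial derivatives of $H,\Theta$, and on this closed annular region (bounded away from the separatrix $\{H=0^-\}$, i.e. $h\le-\epsilon'$, and from the outer boundary, i.e. $h\ge-\tfrac1\epsilon$, and never touching $(0,0)$) the inverse map $\varphi^{-1}$ and the derivatives of $H$ and $\Theta$ are smooth with no vanishing denominators. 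One should check in particular that $\Theta_x,\Theta_y$ stay controlled near $x=0$: there $y=\pm\tfrac12\sqrt{1-16h}\ne 0$, and the $\arctan$ branches glue into a genuinely smooth function of $(x,y)$ on $D$ away from the origin, so no singularity arises. I also need to confirm that the stopped process indeed has this generator — i.e. that stopping at $\tau^N\wedge\tau^N_{-1/\epsilon,-\epsilon'}$ commutes with the generator identity in the usual way, which is standard since a stopped Markov process has generator $\mathcal{K}_N\cdot\mathbbm{1}_{\{\text{not yet stopped}\}}$, and the event $\{(\bm x^N,\bm y^N)\in D,\ \bm H^N\in(-\tfrac1\epsilon,-\epsilon')\}$ is precisely $\{h\in(-\tfrac1\epsilon,-\epsilon')\}$ in the new coordinates, giving the indicator in \eqref{generator micro001}.

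The main obstacle is bookkeeping: carefully tracking which terms are $O(1)$, $O(1/N)$, and $o(1/N)$ through the Taylor expansion of a composition with a fairly intricate change of variables, and confirming that the $H$-drift really is $O(1/N)$ rather than $O(1)$ — this hinges on the exact cancellation coming from $H$ being a conserved quantity of the limiting flow, so one must expand to \emph{second} order to see the surviving $\tfrac1N a^H$ term, rather than stopping at first order. A secondary technical point is making all the error estimates uniform in $(h,\theta)$ over the compact region, for which the restriction to $h\in[-\tfrac1\epsilon,-\epsilon']$ (and hence away from the degeneracies of $\varphi$) is exactly what is needed.
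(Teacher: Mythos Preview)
Your proposal is correct and follows essentially the same route as the paper: write the generator of $(\bm{x}^N,\bm{y}^N)$ in the four-rate form, Taylor-expand $g=f\circ\varphi$ to second order (with a third-order remainder that is $o(1/N)$ uniformly on the compact annulus $D_{\epsilon,\epsilon'}$), compute $(f\circ\varphi)_x,(f\circ\varphi)_{xx},(f\circ\varphi)_y,(f\circ\varphi)_{yy}$ via the chain rule, and observe that the $O(1)$ part of the $f_h$-coefficient cancels because $H$ is a first integral of the macroscopic flow, leaving precisely the structure \eqref{generator micro001}. The paper carries out this computation explicitly term by term to arrive at formula \eqref{GN0}, but the logic and the handling of the stopping/indicator and the regularity on $\{h\in[-\tfrac1\epsilon,-\epsilon']\}$ are exactly as you describe.
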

\begin{proof}
We recall that, for $i,j=1,2$:
\begin{eqnarray*}
\textstyle\psi_i \left(\frac{1}{2}+z\right)&=&\textstyle\phi_i\left(r_j(\frac{1}{2}+z)\right)-\phi_i\left(r_j(\frac{1}{2}-z)\right).
\end{eqnarray*}
In what follows, for $i,j=1,2$, we use the notations: 
\begin{eqnarray*}
\textstyle\psi_i^N\left(\frac{1}{2}+z\right)&=&\textstyle\phi_i\left(\frac{N_j}{N}(\frac{1}{2}+z)\right)-\phi_i\left(\frac{N_j}{N}(\frac{1}{2}-z)\right),\\
\textstyle\psi_i^{N+}\left(\frac{1}{2}+z\right)&=&\textstyle\phi_i\left(\frac{N_j}{N}(\frac{1}{2}+z)\right)+\phi_i\left(\frac{N_j}{N}(\frac{1}{2}-z)\right),\\
\textstyle\psi_i^+ \left(\frac{1}{2}+z\right)&=&\textstyle\phi_i\left(r_j(\frac{1}{2}+z)\right)+\phi_i\left(r_j(\frac{1}{2}-z)\right).
\end{eqnarray*}
  
For $N>1$, let $(\bm{x}^N, \bm{y}^N)$ be the process defined by  
 $\bm{x}^N(t)=\bm{m}_1^N(t)-1/2,\  \bm{y}^N(t)=\bm{m}_2^N(t)-1/2$ and $\tau^N=\inf \{t\geq 0 : (\bm{x}^N(t), \bm{y}^N(t))\notin D\}$. The generator of the process $(\bm{x}^N(\cdot\wedge\tau^N), \bm{y}^N(\cdot\wedge\tau^N))$  for a function $g:\bar{E}^N \to\mathbb{R}$, with 
$\bar{E}^N=\{(x_1,x_2)\in [-\frac{1}{2},-\frac{1}{2}]^2: x_k=\frac{j}{N_k}-\frac{1}{2},  0\leq j\leq N_k, k=1,2 \}$,  is given by:

\begin{eqnarray*}
\mathcal{G}_N g (x,y)&=& \left\{N_1(\frac{1}{4}-x^2)\frac{N_1}{N} \phi_1\left(\frac{N_{2}}{N}(\frac{1}{2}+y)\right)\left[ g\left(x+\frac{1}{N_1}, y\right)-g(x,y)\right]\right.\\
&+& N_1(\frac{1}{4}-x^2)\frac{N_1}{N}\phi_1\left(\frac{N_{2}}{N}(\frac{1}{2}-y)\right)\left[ g\left(x-\frac{1}{N_1}, y\right)-g(x, y)\right] \\
&+& N_2(\frac{1}{4}-y^2)\frac{N_2}{N}\phi_2\left(\frac{N_{1}}{N}(\frac{1}{2}+x)\right)\left[g \left(x, y+\frac{1}{N_2}\right)-g(x, y)\right]\\
&+&\left. N_2(\frac{1}{4}-y^2)\frac{N_2}{N}\phi_2\left(\frac{N_{1}}{N}(\frac{1}{2}-x^N)\right)\left[ g\left(x, y-\frac{1}{N_2}\right)-g(x, y)\right]\right\}\mathbbm{1}_D(x,y).
\end{eqnarray*}
Let $(\bm{H}^N, \bm{\Theta}^N) $ be the Markov process defined by $(\bm{H}^N(t), \bm{\Theta}^N(t))=\varphi(\bm{x}^N(t\wedge\tau^N), \bm{y}^N(t\wedge \tau^N))$. Notice that $(\bm{H}^N, \bm{\Theta}^N)$ 
has as absorbing states all the points of  the form $(-\infty, \theta)$ with $\theta\in \mathbb{R}/2\pi\mathbb{Z}$ and 
the state corresponding to $x=y=0$, which can be identified with the point $(0,0)$.\\    
Define, for $\epsilon, \epsilon^\prime >0$, \  $D_{\epsilon, \epsilon^\prime}=\varphi^{-1}\left[(-\frac{1}{\epsilon}, -\epsilon^\prime) \times \mathbb{R}/2\pi\mathbb{Z}\right]$
and $\tau_{-\frac{1}{\epsilon}, -\epsilon^\prime}^N=\inf \{t\geq 0: \bm{H}^N\notin (-\frac{1}{\epsilon},-\epsilon^\prime)\}$.\\  
Now, consider the stopped process $\big(\bm{H}^N(\cdot\wedge \tau^N_{-\frac{1}{\epsilon}, -\epsilon^\prime}), \bm{\Theta}^N(\cdot\wedge\tau^N_{-\frac{1}{\epsilon}, -\epsilon^\prime})\big)$  and let $\mathcal{G}_N^{\epsilon, \epsilon^\prime}$ be the generator obtained from $\mathcal{G}_N$ by replacing $\mathbbm{1}_D$ with $\mathbbm{1}_{D_{\epsilon, \epsilon^\prime}}$. Let us apply $\mathcal{G}_N^{\epsilon,\epsilon^\prime}$ to $g=f\circ \varphi$ where $f: [-\frac{1}{\epsilon^\prime}, -\epsilon]\times \mathbb{R}/2\pi\mathbb{Z}\to \mathbb{R}$ is a smooth function. 
Then, with the usual notations $(\cdot)_x:= \partial/\partial x$ for the partial derivatives we can write:  
\begin{eqnarray}\label{GNepsilon}
\mathcal{G}_N^{\epsilon, \epsilon^\prime} (f\circ \varphi)(x,y)&=& \left\{\frac{N_1}{N} \left(\frac{1}{4}-x^2\right)\psi^N_1(\frac{1}{2}+y) (f\circ \varphi)_x 
+\frac{N_1}{N} (\frac{1}{4}-x^2)\psi^{N+}_1(\frac{1}{2}+y)  \frac{1}{N_1}(f\circ \varphi)_{xx}\right.\nonumber\\
&+&\left.\frac{N_2}{N} (\frac{1}{4}-y^2)\psi^N_2(\frac{1}{2}+x) (f\circ \varphi)_y 
+\frac{N_2}{N} (\frac{1}{4}-y^2) \psi^{N+}_2(\frac{1}{2}+x)  \frac{1}{N_2}(f\circ \varphi)_{yy}\right\}\mathbbm{1}_{D_{\epsilon, \epsilon^\prime}}(x,y)\nonumber\\
&+& R_N^{\epsilon, \epsilon^\prime}(x,y) 
\end{eqnarray}
where $\lim_{N\to \infty}N R^{\epsilon, \epsilon^\prime}_N(x,y)=0$. The above derivatives are given by:

\begin{align*}
(f\circ \varphi)_x & = \frac{\psi_2(\frac{1}{2}+x)}{r_1(\frac{1}{4}-x^2)}f_h-\frac{y}{x^2+y^2}f_\theta;\\
(f\circ \varphi)_{xx} & =\left[ \frac{\psi_2(\frac{1}{2}+x)}{r_1(\frac{1}{4}-x^2)}\right]^2 f_{hh}
+\left\{\frac{(\psi_2)^\prime(\frac{1}{2}+x)}{r_1(\frac{1}{4}-x^2)}+\frac{\psi_2(\frac{1}{2}+x)2x}{r_1(\frac{1}{4}-x^2)^2}\right\}  f_h -\frac{\psi_2(\frac{1}{2}+x)2y}{r_1(\frac{1}{4}-x^2)(x^2+y^2)}
f_{h\theta} 
\\ &+ \frac{y^2}{(x^2+y^2)^2}f_{\theta\theta} +\frac{2xy}{(x^2+y^2)^2}f_\theta. \\
(f\circ \varphi)_y & =\frac{-\psi_1(\frac{1}{2}+y)}{r_2(\frac{1}{4}-y^2)}f_h+\frac{x}{x^2+y^2}f_\theta; \\
(f\circ \varphi)_{yy} & =\left[ \frac{\psi_1(\frac{1}{2}+y)}{r_2(\frac{1}{4}-y^2)}\right]^2 f_{hh}
+\left\{\frac{-(\psi_1)^\prime(\frac{1}{2}+y)}{r_2(\frac{1}{4}-y^2)}-\frac{\psi_1(\frac{1}{2}+y)2y}{r_2(\frac{1}{4}-y^2)^2}\right\} f_h 
-\frac{\psi_1(\frac{1}{2}+y)2x}{r_2(\frac{1}{4}-y^2)(x^2+y^2)}f_{h\theta}\\ 
&+ \frac{x^2}{(x^2+y^2)^2}f_{\theta\theta} -\frac{2xy}{(x^2+y^2)^2}f_\theta.
\end{align*}

Now observe that, by the regularity of the functions $\phi_1,\phi_2$ and $f\circ\varphi $ on the compact set $\bar{D}_{\epsilon, \epsilon^\prime}$ (where $\bar{A}$ denotes the closure of a set $A$), we have $\psi_i^N(z)=\psi_i(z)+o(1)$, $\psi^{N+}_i(z)=
\psi^+_i(z)+o(1),\ \ i=1,2$, $\sup_{(x,y)\in D_{\epsilon, \epsilon^\prime}}R_N^{\epsilon, \epsilon^\prime}(x,y)\leq o(\frac{1}{N})$  and \eqref{GNepsilon} can be written as follows: 
\begin{eqnarray}\label{GN0}
& &\mathcal{G}_N^{\epsilon, \epsilon^\prime} (f\circ \varphi)(x,y)=
\left\{\frac{1}{N} \left(
\frac{\psi_1^{+}(\frac{1}{2}+y)}{r_1}\Big[(\psi_2)^\prime(\frac{1}{2}+x)+\frac{\psi_2(\frac{1}{2}+x)2x}{\frac{1}{4}-x^2}\Big]\right.\right.\nonumber\\
& &\qquad\qquad\qquad\qquad\qquad\qquad \left. -\frac{\psi_2^{+}(\frac{1}{2}+x)}{r_2}\Big[(\psi_1)^\prime(\frac{1}{2}+y)+\frac{\psi_1(\frac{1}{2}+y)2y}{\frac{1}{4}-y^2}\Big]\right) f_h \nonumber\\ 
& &\qquad +\frac{1}{N}\left(\frac{\left[\psi_2(\frac{1}{2}+x)\right]^2\psi^{+}_1(\frac{1}{2}+y)}{r_1^2 (\frac{1}{4}-x^2)}+ \frac{\left[\psi_1(\frac{1}{2}+y)\right]^2 \psi^{+}_2(\frac{1}{2}+x)}{r_2^2(\frac{1}{4}-y^2)} \right)f_{hh}\nonumber\\
& &\qquad -\frac{1}{N}\left(\frac{\psi_2(\frac{1}{2}+x)\psi^{+}_1(\frac{1}{2}+y)2y}{r_1(x^2+y^2)}+\frac{\psi_1(\frac{1}{2}+y)\psi^{+}_2(\frac{1}{2}+x)2x}{r_2(x^2+y^2)} \right)f_{h\theta}\\
& &\qquad +\left(\frac{1}{N}\frac{2xy[(\frac{1}{4}-x^2)\psi_1^{+}(\frac{1}{2}+y)- (\frac{1}{4}-y^2)\psi_2^{+}(\frac{1}{2}+x)]}{(x^2+y^2)^2}\right.\nonumber\\
& &\qquad\qquad\qquad \left. - \frac{r_1\psi_1(\frac{1}{2}+y)y(\frac{1}{4}-x^2)- r_2\psi_2(\frac{1}{2}+x)x(\frac{1}{4}-y^2)}{x^2+y^2}+o(1)\right)f_\theta\nonumber\\ 	
& &\left.\qquad+\frac{1}{N}\frac{(\frac{1}{4}-x^2)\psi_1^{+}(\frac{1}{2}+y)y^2+ (\frac{1}{4}-y^2)\psi_2^{+}(\frac{1}{2}+x)x^2}{(x^2+y^2)^2}f_{\theta\theta}\right\}\mathbbm{1}_{D_{\epsilon, \epsilon^\prime}}(x,y)+o\left(\frac{1}{N}\right).\nonumber
\end{eqnarray}
We  rewrite \eqref{GN0} as: 
\begin{eqnarray*}
\mathcal{G}_N^{\epsilon, \epsilon^\prime} (f\circ \varphi)=
\frac{1}{N}\left\{a^{(h)}f_h+a^{(hh)}f_{hh}+ a^{(\theta)} f_\theta 
+   a^{(h\theta)} f_{h\theta}+ a^{(\theta\theta)}f_{\theta\theta}\right\}\mathbbm{1}_{D_{\epsilon, \epsilon^\prime}}+ o(\frac{1}{N}).
\end{eqnarray*} 
Using the inverse change of variables $\varphi^{-1}(h,\theta)=\left(x(h,\theta), y(h,\theta)\right)$, the above expression can be written in terms of the variables $(h, \theta)$. We pose $a^{H}(h, \theta)=a^{(h)}(\varphi^{-1}(h, \theta))$ and define analogously $a^{HH}, a^{\Theta}, a^{H\Theta}, a^{\Theta\Theta}$.  
Then, using \eqref{GN0},  
we can write the asymptotic of the generator $\mathcal{K}_N^{\epsilon, \epsilon^\prime}$ of  
$\big(\bm{H}^N(\cdot\wedge\tau^N_{-\frac{1}{\epsilon},-\epsilon^\prime}),\bm{\Theta}^N(\cdot\wedge\tau^N_{-\frac{1}{\epsilon},-\epsilon^\prime})\big)$ on $f$ for large $N$: 

\begin{eqnarray}\label{GEN01}
\mathcal{K}_N^{\epsilon, \epsilon^\prime} f (h,\theta)=\frac{1}{N}\left\{a^{H}(h,\theta) f_h(h,\theta)+a^{HH}(h,\theta)f_{hh}(h,\theta)+ a^{\Theta}(h,\theta) f_\theta(h,\theta) \right.\nonumber\\
\qquad\qquad\qquad\qquad  \left. + a^{H\Theta}(h,\theta) f_{h\theta}(h,\theta)+ a^{\Theta\Theta}(h,\theta)f_{\theta\theta}(h,\theta)\right\}\mathbbm{1}_{\left(-\frac{1}{\epsilon}, -\epsilon^\prime\right)}(h) + o(\frac{1}{N})
\end{eqnarray}
where, to emphasize the presence of the term of order $N$ in $a^\Theta$, we can write $a^\Theta(h,\theta)= N (-F(h,\theta)+o(1))+G(h,\theta)$.
\end{proof}

Note that by \eqref{GEN01} and the form of $a^{\Theta}$ we obtain the macroscopic dynamics \eqref{macro} in terms of the new variables: 
\begin{equation*}
\left\{\begin{array}{ll}
\dot{H}=0;\\
\dot{\Theta}=- F(H,\Theta).
\end{array}
\right.
\end{equation*}
In next subsection we shall prove that for each fixed $h\in (-\infty, 0)$, the function $F(h, \cdot)$ is $\mathcal{C}^1$ and bounded from below by a positive constant $c(h)$ (see the first part of the proof of Proposition \ref{existenceuniqueness}), so that the dynamics $\dot{\Theta}=-F(h, \Theta)$ has a unique invariant distribution given by $\mu^h(\de \theta)=\frac{1}{\mathcal{T}(h)}\frac{1}{F(h,\theta)}\de \theta$ with $\mathcal{T}(h)$ being the normalizing constant. 

\subsection{Main result}\label{s:mainresult}

As can be seen by the coefficients in  \eqref{generator micro001}, the term of order $1$ which appears in $a^\Theta$  indicates that the variable  $\bm{H}^N$ 
jumps at a time scale larger than the one of  $\bm{\Theta}^N$.\\
The goal of this subsection is to describe the macroscopic behaviour of 
the process $\{\bm{H}^N(Nt)\}_{t\in [0,T]}$ as $N\to \infty$.\\ 
Let us consider the generator \eqref{generator micro001} and change the time scale by multiplying it by $N$. 
We obtain the following expression:   
\begin{eqnarray}\label{generator micro01}
& &\tilde{\mathcal{K}}^{\epsilon,\epsilon^\prime}_Nf= \left\{a^H f_h+a^{HH}f_{hh}+ 
 \left[ N (-F+o(1)) + G \right]f_\theta 
\right.\nonumber\\& & \left.\qquad\qquad\qquad\qquad\qquad\qquad+ a^{H\Theta}f_{h\theta}  +a^{\Theta\Theta}f_{\theta\theta}\right\}\mathbbm{1}_{\left(-\frac{1}{\epsilon}, -\epsilon^\prime\right)\times \mathbb{R}/2\pi\mathbb{Z}}  + o(1).
\end{eqnarray}
Next Theorem shows that, as $N\to \infty$, the 
process $\{\bm{H}^N(Nt)\}_{t\in [0,T]}$ behaves like the solution of a stochastic differential equation;  the coefficients of such equation are averages with respect to the invariant distribution for the macroscopic dynamics of the variable $\Theta$.\\
For $h\in (-\infty, 0)$, consider 
\begin{eqnarray*}
& &F(h,\theta) = \frac{r_1\psi_1\left(\frac{1}{2}+y(h,\theta)\right)y(h,\theta)\left(\frac{1}{4}-x(h,\theta)^2\right)- r_2\psi_2\left(\frac{1}{2}+x(h,\theta)\right)x(h,\theta)\left(\frac{1}{4}-y(h,\theta)^2\right)}{x(h, \theta)^2+y(h,\theta)^2},\\
& &\mathcal{T}(h)=\int_0^{2\pi} \frac{1}{F(h,\theta)}\de \theta
\end{eqnarray*}
 and define
 \begin{eqnarray}\label{coefficientsSDEa^H}
 & &\bar{a}^H(h):=\int_0^{2\pi}a^{H}(h,\theta)\mu^h(\de\theta)=\int_0^{2\pi}\frac{a^{H}(h, \theta)}{\mathcal{T}(h)F(h,\theta)}\de\theta,\\\label{coefficientsSDEa^HH}
& &\bar{a}^{HH}(h):=\int_0^{2\pi}a^{HH}(h, \theta)\mu^h(\de\theta)
=\int_0^{2\pi}\frac{a^{HH}(h, \theta)}{\mathcal{T}(h)F(h,\theta)}\de\theta.
\end{eqnarray}

\begin{theorem}\label{Main}
Let $\bar{a}^H, \bar{a}^{HH}$ be defined as in \eqref{coefficientsSDEa^H} and \eqref{coefficientsSDEa^HH} .
Fix $T>0$ and, for any $\epsilon>0$, let $\uptau_{-\frac{1}{\epsilon},0}^N:=\inf \{t\in [0, T]: \bm{H}^N(Nt)\notin (-\frac{1}{\epsilon}, 0)\}$. 
Then, the sequence of stopped processes $\Big\{ \big\{\bm{H}^N(Nt\wedge\uptau_{-\frac{1}{\epsilon}, 0}^N)\big\}_{t\in [0, T]}\ ; N>1\Big\}$  converges weakly, as $N\to\infty$, to the stopped process $\big\{\bm{H}(t\wedge\uptau_{-\frac{1}{\epsilon},0})\big\}_{t\in [0,T]}$, where $\bm{H}$ is the solution in $(-\frac{1}{\epsilon}, 0)$ of the SDE
\begin{equation}\label{equation SDE}
\de\bm{H}(t)=\bar{a}^H\big(\bm{H}(t)\big)\de t + \sqrt{\bar{a}^{HH}\big(\bm{H}(t)\big)}\de B(t)
\end{equation}
 with $\{B(t)\}_{t\geq 0}$ being a Brownian motion and $\uptau_{-\frac{1}{\epsilon},0}:= \inf \{t\in [0, T]: \bm{H}(t)\notin (-\frac{1}{\epsilon}, 0)\}$. 
\end{theorem}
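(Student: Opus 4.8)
Following \cite{DAIPRA2018}, the idea is to characterise the limit through a martingale problem and to eliminate the fast angle $\bm\Theta^N$ by an explicit corrector. We first prove the statement with $\uptau^N_{-\frac1\epsilon,0}$ replaced by $\uptau^N_{-\frac1\epsilon,-\epsilon'}:=\inf\{t\in[0,T]:\bm H^N(Nt)\notin(-\tfrac1\epsilon,-\epsilon')\}$ for a fixed $\epsilon'>0$, where the analysis takes place on the genuine compact set $\overline{D_{\epsilon,\epsilon'}}$, on which (by the first part of the proof of Proposition~\ref{existenceuniqueness}) $F$ is bounded below by a positive constant, $\varphi^{-1}$ is a smooth diffeomorphism, and all the coefficients in \eqref{generator micro001} are bounded and regular; we then let $\epsilon'\downarrow0$. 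By Proposition~\ref{prop:GENH} the sped-up process $\{\bm H^N(Nt\wedge\uptau^N_{-\frac1\epsilon,-\epsilon'})\}$ has generator $\tilde{\mathcal K}^{\epsilon,\epsilon'}_N$ of \eqref{generator micro01}; on a test function $f=f(h)$ of class $\mathcal C^2$ the $\theta$-derivatives vanish, leaving
\[
\tilde{\mathcal K}^{\epsilon,\epsilon'}_Nf(h,\theta)=\bigl[a^H(h,\theta)f'(h)+a^{HH}(h,\theta)f''(h)\bigr]\mathbbm{1}_{(-\frac1\epsilon,-\epsilon')}(h)+o(1)
\]
uniformly. Taking $f(h)=h$ and $f(h)=h^2$ shows that $\bm H^N(N\cdot)$ has uniformly bounded drift and predictable bracket and $O(1/N)$ jumps, so by standard criteria (\cite{EthierKurtz}) $\{\bm H^N(N\cdot\wedge\uptau^N_{-\frac1\epsilon,-\epsilon'})\}_N$ is tight in $\mathcal D([0,T];[-\frac1\epsilon,-\epsilon'])$ with continuous limit points.

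\textbf{Averaging out $\bm\Theta^N$.} For $g=g(h,\theta)$ smooth and $2\pi$-periodic in $\theta$, put $\bar g(h)=\int_0^{2\pi}g(h,\theta)\mu^h(\de\theta)$. Since the fast dynamics $\dot\Theta=-F(h,\Theta)$ is a scalar flow, the Poisson equation $-F(h,\theta)\,\partial_\theta u(h,\theta)=\bar g(h)-g(h,\theta)$ is solved explicitly by $u(h,\theta)=\int_0^\theta\frac{g(h,\phi)-\bar g(h)}{F(h,\phi)}\de\phi$, the $2\pi$-periodicity of $u$ being exactly the relation defining $\bar g$; with $F$ bounded below and $g,F$ smooth on the compact set, $u$ is bounded together with its first two derivatives. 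Applying Dynkin's formula to $\frac1N u(\bm H^N(Nt),\bm\Theta^N(Nt))$ and reading off \eqref{generator micro01}, the only contribution of order $1$ is $N(-F)\cdot\frac1N\partial_\theta u=-F\,\partial_\theta u=g-\bar g$, all remaining terms being $o(1)$ (the bounded coefficients $a^H,a^{HH},G,a^{H\Theta},a^{\Theta\Theta}$ being multiplied by $\frac1N$, and $o(1)\,\partial_\theta u=o(1)$), so
\[
\int_0^t(g-\bar g)(\bm H^N(Ns),\bm\Theta^N(Ns))\,\de s=\tfrac1N\bigl[u(\bm H^N(0),\bm\Theta^N(0))-u(\bm H^N(Nt),\bm\Theta^N(Nt))\bigr]+\tilde M^N_t+\int_0^t o(1)\,\de s,
\]
with $\tilde M^N$ a martingale whose bracket is the integral of the carr\'e du champ of $\frac1N u$. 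Since the jump dynamics performs $O(N^2)$ jumps per unit of sped-up time, each changing $\frac1N u$ by $O(1/N^2)$, this bracket is $O(1/N^2)$, so $E[\sup_{t\le T}|\tilde M^N_t|]=O(1/N)$ by Doob's inequality, and the left-hand side above tends to $0$ in $L^1$ uniformly in $t\le T$. Choosing $g=a^Hf'$ and $g=a^{HH}f''$ — for which $\bar g=\bar a^Hf'$ and $\bar g=\bar a^{HH}f''$, since $f',f''$ depend only on $h$ — and substituting into Dynkin's formula for $f(\bm H^N(Nt\wedge\uptau^N_{-\frac1\epsilon,-\epsilon'}))$, we obtain that
\[
f\bigl(\bm H^N(Nt\wedge\uptau^N_{-\frac1\epsilon,-\epsilon'})\bigr)-f\bigl(\bm H^N(0)\bigr)-\int_0^{t\wedge\uptau^N_{-\frac1\epsilon,-\epsilon'}}\!\!\bigl[\bar a^Hf'+\bar a^{HH}f''\bigr]\bigl(\bm H^N(Ns)\bigr)\,\de s
\]
is a martingale up to a term that tends to $0$ in $L^1$ uniformly in $t\in[0,T]$.

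\textbf{Identification and removal of $\epsilon'$.} Passing to the limit along any convergent subsequence (using boundedness of $\bar a^H,\bar a^{HH}$ on $[-\frac1\epsilon,-\epsilon']$ and the standard fact that the exit functional is a.s.\ continuous at the continuous limit law) shows that every limit point solves the martingale problem for $\mathcal Af=\bar a^Hf'+\bar a^{HH}f''$ stopped at exit from $(-\frac1\epsilon,-\epsilon')$. On $[-\frac1\epsilon,-\epsilon']$ the drift $\bar a^H$ is Lipschitz and $\bar a^{HH}$ is bounded below by a positive constant (the integrand $a^{HH}$ vanishing only at $x=y=0$, hence strictly positive on $\overline{D_{\epsilon,\epsilon'}}$), so this stopped martingale problem is well posed and its solution is $\bm H(\cdot\wedge\uptau_{-\frac1\epsilon,-\epsilon'})$ for $\bm H$ solving \eqref{equation SDE}; by uniqueness the whole sequence converges. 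Finally, let $\epsilon'\downarrow0$: since $\bar a^H(h)\to\bar a^H(0)<0$ as $h\uparrow0$ (a short computation, using $\psi_1'(\tfrac12)>0>\psi_2'(\tfrac12)$, shows the limit exists and is strictly negative) while $\bar a^{HH}(h)=O(|h|)\to0$, the boundary point $h=0$ is not attainable for $\bm H$, so $\uptau_{-\frac1\epsilon,-\epsilon'}\uparrow\uptau_{-\frac1\epsilon,0}$ a.s.\ and $\bm H(\cdot\wedge\uptau_{-\frac1\epsilon,-\epsilon'})\to\bm H(\cdot\wedge\uptau_{-\frac1\epsilon,0})$ uniformly on $[0,T]$; the tight family $\{\bm H^N(N\cdot\wedge\uptau^N_{-\frac1\epsilon,0})\}$, all of whose limit points agree with $\bm H(\cdot\wedge\uptau_{-\frac1\epsilon,-\epsilon'})$ up to exit from $(-\frac1\epsilon,-\epsilon')$ for every $\epsilon'$, then converges to $\{\bm H(t\wedge\uptau_{-\frac1\epsilon,0})\}_{t\in[0,T]}$, which is the claim.

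\textbf{Main obstacle.} The technical heart is the averaging step: one must verify the regularity and uniform bounds for the explicit corrector $u$ (resting on the lower bound $F\ge c_\epsilon>0$ and on the smoothness of $\varphi^{-1}$ away from the origin) and, crucially, estimate the carr\'e du champ of $\frac1N u$ for the genuine jump dynamics so that the remainder martingale $\tilde M^N$ is truly negligible. The second delicate point is the passage $\epsilon'\downarrow0$, i.e.\ controlling $\bm H^N$ near the degenerate boundary $h=0$ and justifying the matching argument; this is where the non-Lipschitz diffusion coefficient $\sqrt{\bar a^{HH}}$ (with $\bar a^{HH}(0)=0$) forces one to invoke one-dimensional (Yamada--Watanabe) uniqueness for the limit SDE on the full interval $(-\frac1\epsilon,0)$.
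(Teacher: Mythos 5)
Your overall architecture (slow--fast representation, tightness, averaging, identification via a stopped martingale problem, then localization in $\epsilon'$) matches the paper's, but your averaging step takes a genuinely different route. The paper proves a quantitative averaging principle (Theorem \ref{theorem averaging principle}, extending \cite{DAIPRA2018}) by passing to a Skorohod representation and slicing $[0,T]$ into the successive periods of the frozen fast flow, comparing $\tilde{\bm{\Theta}}^N$ with the frozen dynamics via Gronwall, and then applies it to $\mathcal{A}f$ (Proposition \ref{Averaging principle}), with Lemma \ref{prop uniform estimate H} supplying the modulus-of-continuity hypothesis. You instead use a perturbed-test-function/corrector argument: since the fast flow is one-dimensional and $F\geq c(\epsilon,\epsilon')>0$ on the stopped region, the Poisson equation in $\theta$ is solved explicitly and periodicity of the corrector is equivalent to the definition of $\bar g$; Dynkin's formula for $\tfrac1N u$ plus the $O(1/N^2)$ bracket bound then kills the fluctuating part. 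This is a legitimate and arguably more compact alternative; the only points to tighten are a sign slip in your Poisson equation (immaterial), and the regularity of $u$ in $h$: with $\phi_i\in\mathcal{C}^2$ the coefficients $a^H,a^{HH}$ are only guaranteed $\mathcal{C}^1$, so rather than a second-order Taylor expansion you should expand to first order with a modulus-of-continuity remainder, which still gives the needed $o(1)$. Also, Yamada--Watanabe is not needed anywhere: uniqueness in law from the Engelbert--Schmidt-type conditions (Theorem \ref{THMKS}), as in Proposition \ref{existenceuniqueness}, is what the martingale-problem identification requires.

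The genuine gap is in your identification and removal of $\epsilon'$, which is exactly where the paper spends most of its effort (Proposition \ref{prop same distribution} and the proof of Theorem \ref{Main}). The approximate martingale relation you derive holds for the process stopped at its \emph{own} exit time $\uptau^N_{-\frac1\epsilon,-\epsilon'}$, and in the limit the martingale property can only be extracted for test functions compactly supported in the open interval (otherwise the stopping/indicator interferes); this class is not measure-determining on $\mathcal{D}([0,T];[-\frac1\epsilon,-\epsilon'])$, which is precisely why the paper restricts to $(-\frac1\epsilon+\delta,-\epsilon'-\delta)$, invokes the stopped martingale problem of Theorem \ref{teo:Kurtz stopping} there, and then reassembles the claim through the chain of estimates in the proof of Theorem \ref{Main}, using a Portmanteau bound on a closed set of paths together with the fact (Proposition \ref{existenceuniqueness}) that the limit diffusion exits on the left side with probability arbitrarily close to $1$ as $\epsilon'\downarrow 0$. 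Your substitute for all of this is the assertion that ``the exit functional is a.s.\ continuous at the continuous limit law''; first-exit times are not continuous on Skorohod space, a.s.\ continuity at the limit requires proving that the limit path crosses the exit level immediately upon reaching it, and in any case you would be applying it to processes already stopped at their microscopic exit times, which is the circular point the $\delta$-localization is designed to break. Likewise, the final matching (``all limit points agree with $\bm H$ up to exit for every $\epsilon'$'') needs the quantitative estimate that $\bm{H}^N$ does not approach $0$ before exiting at $-\frac1\epsilon$, transferred from the limit diffusion via the closed-set argument. So the strategy is sound and close to the paper's, but as written this step is an assertion, not a proof; it becomes one once you insert the $\delta$-restriction, the stopped-martingale-problem uniqueness on the smaller interval, and the exit-side probability estimates.
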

The Theorem will be proved in subsection \ref{proofMain}, using the results of subsection \ref{averagingpr} and Proposition \ref{existenceuniqueness} of next paragraph.  
 
\paragraph{The limit process.}
In order to show that equation \eqref{equation SDE} is well posed and to state its poperties we shall use some known  results concerning existence and uniqueness of solutions of stochastic differential equations in an interval of the real line (see, e.g., \cite{Karatzas}, section 5.5, p. 329).\\
We recall the definition and a fundamental 
result.
\begin{Def}
Let $I=(l, r)$ be an interval of the real line. A \emph{weak solution in} $I$ of the equation \begin{equation}\label{SDE}
\de X(t)=b(X(t))\de t+\sigma(X(t))\de B(t)
\end{equation} is a pair 
$\bm{\Omega}=(\Omega, \mathcal{F}, \{\mathcal{F}_t\}, P)$,  
$(X, B)$, where  $\bm{\Omega}$ is a filtered probability space satisfying the usual conditions, $X$ is a continuous adapted process taking values in $[l,r]$ with $X(0)\in I$ a.s. and $B:=\{B(t), \mathcal{F}_t\}_{t\geq 0} $ is a standard Brownian motion, such that, for all $\bar{l}>l, \bar{r}<r$, letting $\tau_{\bar{l}, \bar{r}}:=\inf\{t\geq 0: X(t)\notin (\bar{l}, \bar{r})\}$ we have: 
\begin{itemize}
\item[ ] $P\left\{\int_0^{t\wedge \tau_{\bar{l}, \bar{r}}}\left[|b(X(s))|+\sigma^2(X(s))\right] \de s <\infty \right\}=1$ for all $t\geq 0$; 
\item[ ] $P\left\{X(t\wedge \tau_{\bar{l}, \bar{r}})=X(0)+\int_0^{t}b(X(s))\mathbbm{1}_{\{s\leq \tau_{\bar{l}, \bar{r}}\}}\de s+
\int_0^{t}\sigma(X(s))\mathbbm{1}_{\{s\leq \tau_{\bar{l}, \bar{r}}\}}\de B(s)\ \forall t\geq 0\right\}=1$.
\end{itemize}  
\end{Def}
We denote by $\tau_{I}$ the \emph{exit time from} $I$, i.e.,  
$$\tau_{I}=\lim_{n\to\infty} \tau_{l_n, r_n}$$
where $\{l_n\}$ and $\{r_n\}$ are strictly monotonic sequences with 
$l<l_n<r_n<r$ for all $n$ and $\lim_{n\to \infty}l_n=l,\  \lim_{n\to \infty}r_n=r$. 
\begin{theorem}\label{THMKS}(Thm 5.1 and subsection C of \cite{Karatzas} )\\
Suppose that the coefficients of \eqref{SDE} satisfy: 
\begin{eqnarray}
& &\sigma^2(x)>0,\quad  \forall x\in I;\label{CONDSDE1}\\
& &\forall x\in I\  \exists \epsilon>0 \mbox{ such that } \int_{x-\epsilon}^{x+\epsilon} \frac{1+|b(y)|}{\sigma^2(y)}\de y <\infty.\label{CONDSDE2}
\end{eqnarray} 
Then, for every initial distribution $\mu$ with $\mu(I)=1$, the equation \eqref{SDE} has a weak solution in $I$ and this solution is unique in the sense of probability law.
\end{theorem}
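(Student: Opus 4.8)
The plan is to reduce \eqref{SDE} to a driftless equation by a change of scale, to build the driftless diffusion as a time change of a Brownian motion in the spirit of Engelbert--Schmidt, and then to transport existence and uniqueness in law back to \eqref{SDE}.

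First I would introduce the scale function. Fixing a reference point $c\in I$, set
$$p(x)=\int_c^x\exp\!\left(-\int_c^y\frac{2b(z)}{\sigma^2(z)}\,\de z\right)\de y,\qquad x\in I.$$
Condition \eqref{CONDSDE2} guarantees that $\int_c^y 2b/\sigma^2$ is finite for every $y\in I$, so $p$ is well defined, strictly increasing, of class $\mathcal{C}^1$ with a locally absolutely continuous positive derivative $p'$; put $I^*:=p(I)=(p(l+),p(r-))$, with $p(l+),p(r-)$ possibly infinite. Applying It\^o's formula to $p$ (legitimate since $p'$ is absolutely continuous) one sees that a pair $(X,B)$ is a weak solution of \eqref{SDE} in $I$ if and only if $Y:=p(X)$ is a weak solution in $I^*$ of the driftless equation $\de Y(t)=\widetilde\sigma(Y(t))\,\de B(t)$ with $\widetilde\sigma:=(p'\sigma)\circ p^{-1}$, and then $X=p^{-1}(Y)$. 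Since $p'$, $\sigma$ and $p^{-1}$ are positive and continuous, $\widetilde\sigma>0$ on $I^*$, and the substitution $x=p^{-1}(y)$ turns \eqref{CONDSDE2} into local integrability of $1/\widetilde\sigma^2$ on $I^*$. Hence it suffices to treat the driftless equation under exactly \eqref{CONDSDE1}--\eqref{CONDSDE2}.

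Next I would handle the driftless equation by a time change. For existence, let $W$ be a Brownian motion started from $Y(0)\in I^*$ (with the prescribed law pushed forward by $p$); with $\varrho_n\downarrow p(l+)$, $\varrho_n'\uparrow p(r-)$ and $S_n:=\inf\{t:W(t)\notin(\varrho_n,\varrho_n')\}$, set $A(t):=\int_0^t\widetilde\sigma^{-2}(W(s))\,\de s$ for $t<S:=\lim_n S_n$. Local integrability of $\widetilde\sigma^{-2}$ together with the occupation-time formula shows that $A$ is finite, continuous and strictly increasing on $[0,S)$; letting $T:=A^{-1}$ and $Y(t):=W(T(t))$, a routine computation gives that $Y$ is a continuous process with $\langle Y\rangle_t=\int_0^t\widetilde\sigma^2(Y(s))\,\de s$, whence $\de Y=\widetilde\sigma(Y)\,\de\beta$ for the Brownian motion $\beta_t:=\int_0^t\widetilde\sigma^{-1}(Y(s))\,\de Y(s)$ (Brownian by L\'evy's characterization), and the processes stopped at the level sets $(\varrho_n,\varrho_n')$ are the required weak solutions; then $X=p^{-1}(Y)$ settles existence. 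For uniqueness in law, let $Y$ be any weak solution of the driftless equation in $I^*$. By Dubins--Schwarz, $Y(t\wedge\tau_{I^*})-Y(0)=W(\langle Y\rangle_{t\wedge\tau_{I^*}})$ for a Brownian motion $W$; since $\langle Y\rangle_t=\int_0^t\widetilde\sigma^2(Y(s))\,\de s$ and $\widetilde\sigma$ is bounded away from $0$ on compact subsets of $I^*$, the additive functional $\langle Y\rangle$ is strictly increasing and its pathwise inverse coincides with $t\mapsto\int_0^t\widetilde\sigma^{-2}(W(s))\,\de s$; therefore the joint law of $\big(W,\langle Y\rangle\big)$, hence the law of $Y$, is determined by the law of $W$ alone. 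Pulling back through $p^{-1}$ gives existence and uniqueness in law for \eqref{SDE}, up to the exit time $\tau_I$.

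The main obstacle is that the coefficients are only locally integrable rather than continuous, so none of the classical (Lipschitz, Yamada--Watanabe, linear-growth) existence/uniqueness theorems applies, and every object used — $p$, the clock $A$ and its inverse $T$, the quadratic variation $\langle Y\rangle$ — must be shown by hand to be well defined and well behaved precisely up to the exit time, which is why the statement is phrased through the stopped times $\tau_{\bar l,\bar r}$. The delicate point is verifying that \eqref{CONDSDE2} is exactly the condition making $p$ finite on all of $I$ and $1/\widetilde\sigma^2$ locally integrable on $I^*$, so that no regularity beyond \eqref{CONDSDE1}--\eqref{CONDSDE2} is needed; the remaining steps are the standard scale-function and time-change calculus for one-dimensional diffusions, for which I would refer to \cite{Karatzas}.
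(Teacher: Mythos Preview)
Your outline is correct and follows the standard scale--function/time--change route that is in fact the content of the cited reference. Note, however, that the paper does \emph{not} give its own proof of this statement: Theorem~\ref{THMKS} is quoted verbatim from \cite{Karatzas} (Theorem~5.5.15 for existence via Engelbert--Schmidt and the surrounding subsection for uniqueness) and is invoked only as a black box in the proof of Proposition~\ref{existenceuniqueness}. So there is nothing to compare against; your sketch simply reproduces, in compressed form, the argument from the source the authors are citing.
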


In next proposition we show that \eqref{equation SDE} has a  weak solution in $(-\infty, 0)$ and, for any $\epsilon>0$,  the solution in the interval $(-\tfrac{1}{\epsilon}, 0)$ exits a.s. from it and does so by the left side. 
\begin{prop}\label{existenceuniqueness}
For every initial distribution $\mu$ with $\mu\{(-\infty, 0)\}=1$ the equation \eqref{equation SDE} has a weak solution in the interval $I=(-\infty, 0)$ and this solution is unique in the sense of probability law. 
Moreover, if we let $\uptau_I=\inf\{t\geq 0 : \bm{H}(t)\notin I\}$ and, for all $\epsilon>0$, $\uptau_{-\frac{1}{\epsilon}, 0}=\inf\{t\geq 0 : \bm{H}(t)\notin (-\tfrac{1}{\epsilon}, 0)\}$ we have 
\begin{equation}\label{limitiI}
P\left(\lim_{t\to \uptau_I} \bm{H}(t\wedge \uptau_{I})=-\infty\right)=P\left(\sup_{0\leq t\leq \uptau_{I}}\bm{H}(t\wedge\uptau_{I}) <0\right)=1
\end{equation}
and 
\begin{equation}\label{limitiIepsilon}
P(\uptau_{-\frac{1}{\epsilon},0}<\infty)=1.
\end{equation}
\end{prop}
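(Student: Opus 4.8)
The plan is to verify the hypotheses of Theorem \ref{THMKS} for the coefficients $b=\bar a^H$ and $\sigma^2=\bar a^{HH}$ on $I=(-\infty,0)$, and then to use the scale function and speed measure of the resulting diffusion to analyze the boundary behaviour. First I would establish the regularity and positivity claims needed for \eqref{CONDSDE1}--\eqref{CONDSDE2}. The key preliminary fact, announced in the excerpt, is that for each fixed $h\in(-\infty,0)$ the function $F(h,\cdot)$ is $\mathcal{C}^1$ and bounded below by a positive constant $c(h)$: this follows because the curve $\mathcal{C}_h=\varphi^{-1}(\{h\}\times\mathbb{R}/2\pi\mathbb{Z})$ is a smooth closed curve contained in $D$ that stays away from $(0,0)$ and from $\partial[0,1]^2$, so $x(h,\theta)^2+y(h,\theta)^2$ is bounded away from $0$ and the numerator of $F$ is a smooth nonvanishing function on it (it is the speed of the periodic macroscopic orbit, which never stops). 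Hence $\mu^h(\de\theta)=\frac{1}{\mathcal{T}(h)F(h,\theta)}\de\theta$ is a well-defined smooth probability density, and $\bar a^H,\bar a^{HH}$ are obtained by integrating the $\mathcal{C}^1$ functions $a^H,a^{HH}$ against it, so they are continuous (indeed $\mathcal{C}^1$) on $(-\infty,0)$. For \eqref{CONDSDE1} I would show $\bar a^{HH}(h)>0$: since $a^{HH}$ is, by \eqref{GN0}, a sum of two manifestly nonnegative terms (each of the form $[\psi_i]^2\psi_j^+/(r_i^2(\tfrac14-\cdot^2))$ with $\psi_i^+>0$ because $\phi_i>0$), and it cannot vanish identically on $\mathcal{C}_h$ since $\psi_2(\tfrac12+x)$ and $\psi_1(\tfrac12+y)$ vanish only at $x=0$ resp. $y=0$ and these do not happen simultaneously on $\mathcal{C}_h$; integrating a nonnegative, not-identically-zero continuous function against the everywhere-positive density $\mu^h$ gives a strictly positive value. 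Condition \eqref{CONDSDE2} is then immediate from continuity of $\bar a^H$ and from $\bar a^{HH}>0$ and continuous, so locally $\frac{1+|\bar a^H|}{\bar a^{HH}}$ is bounded. Theorem \ref{THMKS} then yields existence and uniqueness in law of a weak solution in $I=(-\infty,0)$.

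Next I would prove \eqref{limitiI} via the scale function. Define $p(x)=\int_{c}^x \exp\!\big(-\int_c^y \frac{2\bar a^H(z)}{\bar a^{HH}(z)}\de z\big)\de y$ for a fixed reference point $c\in(-\infty,0)$; by the classical boundary classification (Feller's test, as in \cite{Karatzas} Ch.\ 5.5C), the solution exits $I$ a.s.\ at $-\infty$ and never reaches $0$ provided $p(-\infty)=-\infty$ while $p(0^-)<\infty$, or more directly that the boundary $0$ is non-attracting (entrance-type or natural with $p(0^-)=+\infty$) and $-\infty$ is attracting. The essential point to control is the behaviour of $\bar a^H$ and $\bar a^{HH}$ as $h\uparrow 0$ and as $h\downarrow -\infty$, i.e.\ the asymptotics of the averaged coefficients near the centre $(1/2,1/2)$ (which corresponds to $h\to 0^-$, since $H$ has a maximum there after the sign/exponential normalization — here $H\to-\infty$ actually corresponds to the boundary of the square, so in the Hamiltonian used in Proposition \ref{prop:GENH} the centre is $h=0^-$ and the square's boundary is $h=-\infty$). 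I would extract these asymptotics by a change of variables making the orbits near $(1/2,1/2)$ approximately circles (the linearization is a harmonic oscillator) so that $\mu^h$ tends to the uniform measure and $\bar a^H(h)\sim$ const and $\bar a^{HH}(h)\sim$ const$\cdot|h|$ (or the appropriate power) as $h\to0^-$, forcing $p(0^-)=+\infty$; near $h=-\infty$ one shows the drift $\bar a^H$ stays bounded away from $0$ with the sign pushing toward $-\infty$ (consistent with the microscopic system being driven to consensus), giving that $-\infty$ is reached a.s.

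Finally, \eqref{limitiIepsilon} follows from \eqref{limitiI}: on the event $\{\lim_{t\to\uptau_I}\bm H(t)=-\infty\}\cap\{\sup_{t<\uptau_I}\bm H(t)<0\}$, which has probability one, the path of $\bm H$ started in $(-\tfrac1\epsilon,0)$ must cross the level $-\tfrac1\epsilon$ in finite time — it cannot stay in $(-\tfrac1\epsilon,0)$ forever because it tends to $-\infty$, and it cannot exit through $0$ because $\sup\bm H<0$ — hence $\uptau_{-\frac1\epsilon,0}<\infty$ a.s. The main obstacle I anticipate is the boundary analysis at $h\to0^-$: one has to show the elliptic-type degeneracy of $\bar a^{HH}$ is mild enough that the scale function diverges at $0$, which requires careful tracking of how the orbit $\mathcal{C}_h$ shrinks to the point $(1/2,1/2)$ and of the $o(1)$ error terms in the generator \eqref{generator micro001}; making the averaging argument (Proposition in subsection \ref{averagingpr}) compatible with these degenerating coefficients near the endpoints is the delicate part, which is precisely why the statement is phrased with the stopped process at $\uptau_{-\frac1\epsilon,0}$ for each fixed $\epsilon>0$ rather than directly on $(-\infty,0)$.
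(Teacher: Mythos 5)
Your overall route is the same as the paper's: verify \eqref{CONDSDE1}--\eqref{CONDSDE2} so that Theorem \ref{THMKS} gives existence and uniqueness in law on $I=(-\infty,0)$, then read off the boundary behaviour from the scale function. The first part (regularity of $F$, $\mathcal{T}$, $a^H$, $a^{HH}$ on compact sets $[-\frac{1}{\epsilon'},-\epsilon]\times\mathbb{R}/2\pi\mathbb{Z}$, strict positivity of $\bar a^{HH}$) matches the paper, and your deduction of \eqref{limitiIepsilon} directly from \eqref{limitiI} (a path converging to $-\infty$ as $t\to\uptau_I$ must cross the level $-\frac{1}{\epsilon}$ at a finite time, and it cannot exit through $0$) is a correct shortcut; the paper instead verifies $\lim_{z\to-\frac{1}{\epsilon}}v(z)<\infty$ for the function $v$ in \eqref{vscalefunction} and invokes Proposition 5.32 of \cite{Karatzas}. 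However, the boundary analysis, which is the substance of \eqref{limitiI}, has a genuine gap. You first state the criterion backwards: the conclusion $P(\lim_{t\to\uptau_I}\bm{H}=-\infty)=P(\sup\bm{H}<0)=1$ requires $p(-\infty)>-\infty$ \emph{together with} $p(0^-)=+\infty$ (Proposition 5.22(b) of \cite{Karatzas}), not ``$p(-\infty)=-\infty$ while $p(0^-)<\infty$''; your later parenthetical corrects this, but the slip matters because the finiteness of $p$ at $-\infty$ is exactly what your sketch never establishes. At the left boundary you argue only that the drift is bounded away from zero and points toward $-\infty$; that is not sufficient. Whether $p(-\infty)$ is finite depends on the ratio $\bar a^H/\bar a^{HH}$: if $\bar a^{HH}$ grew much faster than $|\bar a^H|$ as $h\to-\infty$, the scale density would not decay, $p(-\infty)$ would be $-\infty$, and case (a) of Proposition 5.22 would give recurrence rather than convergence to $-\infty$. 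The paper closes this by proving the two-sided comparison $K_1\bar l(h)\leq\bar a^{HH}(h)\leq K_2\bar l(h)$ and $\bar l(h)\leq-\bar a^{H}(h)\leq\bar l(h)+K$, with the same diverging quantity $\bar l(h)$, so that $-2\bar a^H/\bar a^{HH}\geq 2/K_2$ and the scale density decays exponentially, whence $p(-\infty)>-\infty$. Some version of this control on $\bar a^{HH}$ relative to the drift is indispensable and is absent from your plan.

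A similar quantitative point is missing at $h\to 0^-$. From $\bar a^H(h)\to-C_2<0$ and $\bar a^{HH}(h)\sim C_1|h|$ the scale density behaves like $|h|^{-2C_2/C_1}$, so $p(0^-)=+\infty$ holds if and only if the exponent is at least $1$; the asymptotics ``drift of constant sign, diffusion degenerating linearly'' do not by themselves ``force'' $p(0^-)=+\infty$. One must identify the constants and compare them, which is what the paper does by Taylor expansion of $a^{(h)}$, $a^{(hh)}$ and of $H$ near $(0,0)$, obtaining $C_1=8\,[\phi_1(\tfrac{r_2}{2})|\phi_2'(\tfrac{r_1}{2})|]\vee[\phi_2(\tfrac{r_1}{2})\phi_1'(\tfrac{r_2}{2})]$ and $C_2$ equal to $8$ times the corresponding sum, so that $C_2/C_1>1$. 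Your harmonic-oscillator normalization near $(\tfrac12,\tfrac12)$ could serve the same purpose, but the constant comparison has to be carried out explicitly; as written, this step (and the whole left-boundary step) would not go through.
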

\begin{remark}
In next paragraph we illustrate the case when $\phi_1$ and $\phi_2$ are two linear functions. In this case we can obtain explicit expressions for the coefficients $\bar{a}^{H}$ and $\bar{a}^{HH}$ and the random time in \eqref{limitiIepsilon} will be replaced by $\uptau_I$. 
In the general case we have to restrict to the interval $I_\epsilon$.  
Indeed, since we cannot have explicit expressions of 
$a^{H}(h,\theta)$ and $a^{HH}(h,\theta)$ in terms of elementary functions, in order to obtain information about $\bm{H}$ near the endpoints of  $(-\infty, 0)$, we need estimates on such coefficients which are possible only when $h$ is close to 0.\\
However, we are interested in the behaviour of $\bm{H}$ \emph{before} 
it eventually reaches $-\infty$, since this should describe the behaviour of the microscopic variable $\bm{H}^N(Nt)$ for large $N$ before it reaches its absorbing state $-\infty$. Therefore, for our purposes it will be enough 
to study the process in the interval $\left(-\frac{1}{\epsilon}, 0\right)$ for $\epsilon$ arbitrarily small.  
\end{remark}
\begin{proof}
We recall that $a^H$ is defined by $a^H(h,\theta)=a^{(h)}(x(h,\theta), y(h,\theta))$, where $a^{(h)}(x,y)$ is the coefficient of $f_h$ in \eqref{GN0} and the analogous relation holds for $a^{HH}$. 
Let us fix two small positive numbers $\epsilon, \epsilon^\prime$ and suppose $h\in [-\frac{1}{\epsilon^\prime},-\epsilon ]$. 
Consider the term of order 1 in \eqref{GN0}, i.e., 
$$\tilde{F}(x,y) =- \frac{r_1\psi_1(\frac{1}{2}+y)y(\frac{1}{4}-x^2)- r_2\psi_2(\frac{1}{2}+x)x(\frac{1}{4}-y^2)}{x^2+y^2}$$ which has been written in \eqref{generator micro01} as $-F(h, \theta)=\tilde{F}(x(h,\theta), y(h,\theta))$. 
Note that, for $x,y\neq 0$ we have $\psi_1(\frac{1}{2}+y)y>0$ and 
$\psi_2(\frac{1}{2}+x)x>0$. Moreover, for  $x,y \in \bar{D}_{\epsilon, \epsilon^\prime}$ there exist $\delta=\delta(\epsilon)>0$ and $\delta^\prime=\delta^\prime(\epsilon^\prime)>0$ such that 
$\delta^2<(|x|\vee |y|)^2<\frac{1}{4}-\delta^\prime$, from which it follows that:
$$-\tilde{F}(x,y)\geq c(\epsilon, \epsilon^\prime)$$ for a constant $c(\epsilon, \epsilon^\prime)>0$. 
Then, the function $F$ is $\mathcal{C}^1$ (hence bounded) on $[-\frac{1}{\epsilon^\prime},-\epsilon]\times \mathbb{R}/2\pi \mathbb{Z}$ and the same holds for the functions $\frac{1}{F}$ and $\frac{1}{\mathcal{T}}$ where $\mathcal{T}:[-\frac{1}{\epsilon^\prime},-\epsilon]\to \mathbb{R}^+$ is given by $\mathcal{T}(h)=\int_{0}^{2\pi}\frac{1}{F(h, \theta)}\de \theta$. \\
Analogously, the functions $a^H(h,\theta)$ and $a^{HH}(h,\theta)$ are both of class $\mathcal{C}^1$ on the same interval, and so the functions $\bar{a}^H$ and $\bar{a}^{HH}$ are Lipschitz continuous for $h\in [-\frac{1}{\epsilon^\prime},-\epsilon]$ for all $\epsilon, \epsilon^\prime$. Moreover, the function $a^{(hh)}$ is strictly positive for all $(x, y)\in D$, hence the same holds for  $\bar{a}^{HH}$ for all $h\in \left(-\infty, 0\right)$. Then, conditions \eqref{CONDSDE1} and \eqref{CONDSDE2} of Theorem \ref{THMKS} are satisfied.\\

Now, for a fixed number $c\in I$, let us consider the \emph{scale function}
 \begin{equation}\label{scalefunction}
 p(z)=\int_c^z \exp\left\{-2\int_c^u\frac{\bar{a}^H(h)}{\bar{a}^{HH}(h)}\de h\right\}\de u,\ \ \ x\in I.
 \end{equation}
 Such function does not depend on the choice of $c$ and, 
according to  Proposition 5.22 of \cite{Karatzas}, a sufficient condition for \eqref{limitiI} is $\lim_{z\to -\infty}p(z)>-\infty$ and $\lim_{z\to 0^-}p(z)=\infty$. 

For the second limit, let us observe that the functions $a^{(h)}, a^{(hh)}$ in \eqref{GN0} are of class $\mathcal{C}^1$ and $\mathcal{C}^2$ respectively in a neighbourhood of $(0,0)$. Note also that $(x,y)\to 0$ if and only if $h\to 0$. Then, by Taylor expansion, for $(x,y)$ close to $(0,0)$ we have 
\begin{eqnarray*}
& &\textstyle  a^{(hh)}(x, y)=32 \{\phi_1(\frac{r_1}{2})[\phi_2^\prime(\frac{r_2}{2})]^2 x^2+ \phi_2(\frac{r_2}{2})[\phi_1^\prime(\frac{r_1}{2})]^2 y^2\}+o((x+y)^2);\\ 
& &\textstyle a^{(h)}(x,y)= -4 \{\phi_1(\frac{r_2}{2})|\phi_2^\prime(\frac{r_1}{2})|+\phi_2(\frac{r_1}{2})\phi_1^\prime(\frac{r_2}{2})\}+o((x+y)) 
\end{eqnarray*}
and 
\begin{eqnarray*}
\textstyle H(x, y)=-4\{|\phi_2^\prime(\frac{r_1}{2})|x^2+\phi_1^\prime(\frac{r_2}{2})y^2\}+o((x+y)^2).
\end{eqnarray*}
It follows that
$
a^{(hh)}(x,y)\leq -C_1 H(x,y) + o((x+y)^2)$ and 
$a^{(h)}(x,y)= -C_2 + o((x+y))$
with  
$C_1=8 [\phi_1(\frac{r_2}{2})|\phi_2^\prime(\frac{r_1}{2})|]\vee [\phi_2(\frac{r_1}{2})\phi_1^\prime(\frac{r_2}{2})]$ and 
$C_2=8 [\phi_1(\frac{r_2}{2})|\phi_2^\prime(\frac{r_1}{2})|+ \phi_2(\frac{r_1}{2})\phi_1^\prime(\frac{r_2}{2})]$ and so $C_2/C_1>1$.
Therefore, for $h$ close to 0, we have the following estimates, that hold uniformly in $\theta$: 
\begin{eqnarray*}
& &\textstyle a^{HH}(h,\theta)\leq -C_1 h+o(h);\\
& &\textstyle a^H(h, \theta)\leq -C_2+o(\sqrt{h}). 
\end{eqnarray*}
Integrating with respect to $\mu^h$ we obtain the same inequalities 
for $\bar{a}^{HH}(h)$ and $\bar{a}^H(h)$, and so, for $c<x<0$ and $c$ sufficiently close to 0 we have    
$p(z)\geq \int_c^z \exp\left\{\int_c^u\frac{C_2/C_1+o(\sqrt{h})}{-h+o(h)}\de h\right\}\de u\geq \int_c^z\exp\left\{\int_c^u \frac{C_2/C_1}{-h+o(h)}\de h \right\}du$ which implies $\lim_{z\to 0^-}p(z)=\infty$.\\ 
Now, for $h\leq c$ we have 
$\frac{1}{4}-(|x(h,\theta)|\vee|y(h,\theta)|)^2\leq \rho(c)$, with $\lim_{c\to -\infty}\rho(c)=0$,  and the following inequalities, that hold uniformly with respect to $\theta$:
\begin{eqnarray*}
& &\textstyle K_1 l\left(x(h, \theta),y(h,\theta)\right)\leq  a^{(HH)}(h, \theta) \leq K_2 l\left(x(h, \theta),y(h,\theta)\right)
;\\ 
& &\textstyle 2l\left(x(h, \theta),y(h,\theta)\right)\leq -2 a^{(H)}(h,\theta)\leq 2 (l\left(x(h,\theta),y(h,\theta)\right)+ K)
\end{eqnarray*}
where \begin{equation*}\textstyle l(x,y)=-\frac{\psi_1^+(\frac{1}{2}+y)\psi_2(\frac{1}{2}+x)2x}{r_1(\frac{1}{4}-x^2)}+
\frac{\psi_2^+(\frac{1}{2}+x)\psi_1(\frac{1}{2}+y)2y}{r_2(\frac{1}{4}-y^2)}
\end{equation*}
and  $K_1, K_2, K>0$ are constant; more precisely, $K_1=\frac{\inf |\phi_2^\prime|}{r_1}\wedge \frac{\inf \phi_1^\prime}{r_2},\ K_2=
\frac{\sup |\phi_2^\prime|}{r_1}\vee \frac{\sup \phi_1^\prime}{r_2},\ 
K=4\left(\frac{\sup\phi_1\sup |\phi_2^\prime|}{r_1}\vee \frac{\sup \phi_2\sup\phi_1^\prime}{r_2}\right)$. 
Then, integrating $a^H(h,\theta)$ and $a^{HH}(h, \theta)$ with respect to the measure $\mu^{h}$ and posing $\bar{l}(h)=\int_0^{2\pi}l(x(h,\theta), y(h,\theta))\mu^h(\de\theta)$,  we obtain: 
 \begin{eqnarray*}
& &\textstyle K_1 \bar{l}(h)\leq  \bar{a}^{(HH)}(h) \leq K_2 \bar{l}(h)   
;\\ 
& &\textstyle 2\bar{l}(h)\leq -2 \bar{a}^{(H)}(h)\leq 2 (\bar{l}(h)+ K)
\end{eqnarray*}
with $\lim_{h\to -\infty}\bar{l}(h)=\infty$. Therefore   
$p(z)\geq \int_c^z \exp\{\int_c^u \frac{2}{K_2}\de h\}\de u=\frac{K_2}{2}\left(-1+e^{\frac{2}{K_2}(z-c)}\right)$ form which it follows 
$\lim_{z\to-\infty}p(z)>-\infty$.\\
Finally, for $c, z\in \left(-\frac{1}{\epsilon}, 0 \right)$ consider the function 
\begin{equation}\label{vscalefunction}
v(z)=\int_c^{z}\exp\left\{-2\int_c^y \frac{\bar{a}^H(h)}{\bar{a}^{HH}(h)}\de h\right\}\left(\int_c^y \frac{2}{\exp\left\{-2\int_c^w \frac{\bar{a}^H(h)}{\bar{a}^{HH}(h)}\de h\right\}\bar{a}^{HH}(w)}\de w\right) \de y .
\end{equation}
By Proposition 5.32 of \cite{Karatzas}, $P(\uptau_{-\frac{1}{\epsilon}, 0 }<\infty)=1$ if  $\lim_{z\to 0^-}p(z)=\infty$ and $\lim_{z\to -\frac{1}{\epsilon}}v(z)<\infty$, so we are left to prove the last inequality. 
This follows immediately by observing that in the interval $\left[-\frac{1}{\epsilon}, c \right]$ the functions $\bar{a}^H$ and $\bar{a}^{HH}$ are both regular and bounded away from zero.\\
 \end{proof}

\paragraph{The limit process in the linear case.}
Let us consider the simpler case proposed in the introduction of this section, i.e.,  the case when $\phi_1$ and $\phi_2$ are as in \eqref{Linearphi} and the change of variables $\varphi$ is defined 
 by \eqref{H(x,y) in 0,0} and \eqref{thetaMacro}. 
 Applying the same arguments used in the proof of Proposition \ref{prop:GENH} we are able to obtain an explicit expression for the equation satisfied by the limit process.\\  

We recall that the Legendre's  elliptic integrals of the first and second kind are defined respectively as $F(\varphi | m)= \int_0^\varphi \frac{1}{\sqrt{1-m\sin^2\theta}}\de \theta$ and $E(\varphi | m)=\int_0^\varphi \sqrt{1-m\sin^2\theta}\de \theta$; when $\varphi=\frac{\pi}{2}$ the integrals are called \emph{complete} and we shall denote them respectively by $K(m)$ and $E(m)$. \\
In order to simplify notations, let us pose $R(h,\theta)=\sqrt{1-(1-16h)\sin^2(2\theta)}$ and  $ \beta=2[a(r_2-r_1)+2(b_1+b_2)]$. Then we have 
\begin{eqnarray*}
\bar{a}^H(h)&=&-\frac{(a r_2-a r_1+2b_1+2b_2)}{32K(1-16h)}\int_0^{2\pi}\left(1-\frac{1-R(h,\theta)}{2\sin^2\theta}\right)\left(1-\frac{1-R(h,\theta)}{2\cos^2\theta}\right)\frac{\de \theta}{R(h,\theta)}\\
&=& -\beta h
\end{eqnarray*}

and

\begin{eqnarray*}
\bar{a}^{HH}(h)&=& \frac{\big(ar_2+2b_1\big)}{256 K(1-16h)}\int_0^{2\pi}\left[ \frac{1-R(h,\theta)}{2\sin^2\theta}\left(1-\frac{1-R(h,\theta)}{2\sin^2\theta}\right)\left(1-\frac{1-R(h,\theta)}{2\cos^2\theta}\right)^2\right]\frac{\de \theta}{R(h,\theta)}\\
&+&\frac{\big( -ar_1+2b_2\big)}{256 K(1-16h)} \int_0^{2\pi}\left[ \frac{1-R(h,\theta)}{2\cos^2\theta}\left(1-\frac{1-R(h,\theta)}{2\cos^2\theta}\right)\left(1-\frac{1-R(h,\theta)}{2\sin^2\theta}\right)^2 \right] \frac{\de \theta}{R(h,\theta)}\\
&=& 2(ar_2-ar_1+2b_1+2b_2) h\left[ -\frac{1}{8K(1-16h)}\int_0^{2\pi}\left(  \frac{1-R(h,\theta)}{\cos^2(\theta)} \right)\frac{\de \theta}{R(h,\theta)}
   +(1-16h)\right]\\
   &=&\beta h\left[\frac{E(1-16h)}{K(1-16h)}-16h\right].
\end{eqnarray*} 
Then, adapting the proof of Proposition \ref{existenceuniqueness}, we conclude that the limit process is the (unique, in the sense of probability 
law) weak solution in the interval $I=(0, \frac{1}{16})$ of the equation 
\begin{eqnarray*}
& &\de \bm{H}(t)=  -\beta \bm{H}(t)\de t+\sqrt{\beta \bm{H}(t)\left[\frac{E(1-16\bm{H}(t))}{K(1-16\bm{H}(t))}-16\bm{H}(t)\right]}\de B(t).
\end{eqnarray*}
Letting $\uptau_I=\inf\{t\geq 0: \bm{H}_t\notin (0, \frac{1}{16})\}$, we have \begin{equation*}
P\left(\lim_{t\to \uptau_I} \bm{H}(t\wedge \uptau_{I})=0\right)=P\left(\sup_{0\leq t\leq \uptau_{I}}\bm{H}(t\wedge\uptau_{I}) <\frac{1}{16}\right)=1.
\end{equation*} 
Moreover, \eqref{limitiIepsilon} can be improved by showing that: 
 \begin{equation*}
P(\uptau_I < \infty)=1.
\end{equation*}

Indeed, let us prove that $\lim_{z\to 0^+}v(z)<\infty$. 
The scale function \eqref{scalefunction} is given by  $p(z)= \int_c^z\exp\Big\{16\int_c^u\frac{1}{\frac{E(1-16h)}{K(1-16h)}-16h}\de h\Big\}\de u$.
We again simplify notations by posing $g(h)=\frac{1}{\frac{E(1-16h)}{K(1-16h)}-16h}$, 
so that we can write function $v$ defined in \eqref{vscalefunction} as:
\begin{eqnarray*}
v(z)= \frac{1}{\beta}\int_c^z \left[\exp\Big\{16\int_c^yg(h)\de h\Big\}\int_c^y\exp\Big\{-16\int_c^wg(h)\de h\Big\}\frac{  g(w) }{w }\de w\right]\de y.
\end{eqnarray*}

Note that $g$ is a positive function and, for $c>z>0$, we have $\exp\Big\{16\int_c^xg(h)\de h\Big\}<1$. 
Moreover, by the relations $\lim_{x\to}K[1-x]-\ln(\frac{4}{x})=0$ and $\lim_{x\to0}E[1-16x]=1$ (see \cite{Whittaker} ch.22, p. 521) it follows that: 
\begin{equation*}
\lim_{h\to 0^+}\frac{g(h)}{(-\ln(4h))}=1.
\end{equation*}
Then, $0< -16\int_c^zg(h)\de h \leq 16\int_0^cg(h)\de h\leq C$, where $C$ is a positive constant, and for all $\epsilon>0$ we can choose $c$ sufficiently close to 0 such that: 
\begin{eqnarray*}
v(z) \leq \frac{1}{\beta}\int_z^c \int_y^c e^C\frac{g(w)}{w}\de w \de y \leq -\frac{e^C}{\beta}\int_z^c\int_y^c(1+\epsilon)\frac{\ln(4w)}{w}\de w\de y.
\end{eqnarray*}
From this it follows $\lim_{z\to 0^+}v(z)<\infty$.

\subsection{An averaging principle}\label{averagingpr} 
In this section we prove an Averaging principle for a sequence  $\left\{( X^N, Y^N); N\geq1\right\}$ of Markov processes, where $Y^N$ describes a  ''fast'' variable with values in  $\mathbb{R}/2\pi\mathbb{Z}$ and $X^N$ describes a  ''slow'' variable with values in a closed interval of  $\mathbb{R}$ . This result extends the one of Proposition 3.2 of  \cite{DAIPRA2018} to the case when  the velocity of the fast variable is not necessarily constant.  The idea is to  compare $(X^N,Y^N)$ with a process close to it, where the slow variable is piecewise constant in time.
\begin{theorem}\label{theorem averaging principle}
Let $T>0$ and  $I$ be a closed interval in $\mathbb{R}$. Let $\xi:E=I\times\mathbb{R}/2\pi\mathbb{Z}\to\mathbb{R}$ be a Lipschitz function. Let $\big\{(X^N,Y^N); N\geq 1\big\}$ be a sequence of càdlàg Markov processes, where $\big\{X^N(t),Y^N(t)\big\}_{t\in[0,T]}$  has state space $E^N\subset I\times\mathbb{R}/2\pi\mathbb{Z}$, for all $N$, and denote by $\{\mathcal{F}^N_t\}$ its natural filtration. Let $\gamma>0$ and suppose the following conditions hold:
\begin{itemize}
\item[i)] $\big\{X^N\ ; N\geq 1\big\}$ converges weakly, as $N\to\infty$, to a process $\bar{X}:=\big\{\bar{X}(t)\big\}_{t\in[0,T]}$ with values in $I$ and for all $\zeta>0$ there exists a constant $C_\zeta>0$,    such that for all $\{\mathcal{F}^N_t\}$-stopping time $\tau$ with $\tau\leq T$:
\begin{equation}\label{eq: hyp i)}
E\left[\sup_{t\in[\tau,(\tau+\zeta/N^\gamma)\wedge T]}\left|X^N(t)-X^N(\tau)\right|\right]\leq C_\zeta e(N)
\end{equation}
where $\lim_{N\to\infty}e(N)=0$.

\item[ii)] 
Denoting by $\mathcal{L}_N$ the generator of the process $(X^N,Y^N)$ and by $p_y:I\times\mathbb{R}/2\pi\mathbb{Z}\to\mathbb{R}/2\pi\mathbb{Z}$ the projection on the second coordinate we can write:
\begin{equation*}
\mathcal{L}_Np_y(x^N,y^N)=\left[ N^\gamma\left(F(x^N,y^N)+\delta_N\right)+G(x^N,y^N)\right]\left(1+o(1)\right)
\end{equation*}
where $(x^N,y^N)\in E^N$ and  $\delta_N$ is a sequence converging to zero.  $F$ is  a Lipschitz function in both variables and $\inf_{(x,y)\in E}|F(x,y)|\geq\epsilon>0$; $G$ is a continuous function and $\|G\|_\infty=\sup_{(x,y)\in E}|G(x,y)|<\infty$.

\item[iii)] The martingale given by  $M^N(t) = Y^N(t)-Y^N(0)-\int_0^t\mathcal{L}_Np_y\big(X^N(s),Y^N(s)\big)\de s $ is such that for all $\zeta>0$ there exists a constant  $\bar{C}_\zeta>0$ such that, for all $\{\mathcal{F}^N_t\}$-stopping time $\tau$ with $\tau\leq T$:
\begin{equation}\label{eq: hyp iii)}
E\left[\sup_{t\in [\tau,(\tau+\zeta/N^\gamma)\wedge T]}\left|M^N(t)-M^N(\tau)\right|\right]\leq \bar{C}_\zeta \bar{e}(N)
\end{equation} 
where $\lim_{N\to\infty}\bar{e}(N)=0$.
\end{itemize}
Then, as $N\to\infty$ 
\begin{equation*}
\int_0^T\xi\left(X^N(t),Y^N(t)\right)\de t \xrightarrow{weakly}\int_0^T\bar{\xi}\big(\bar{X}(t)\big)\de t
\end{equation*}
where $\bar{\xi}(x):=\int_0^{2\pi}\xi(x,y)\mu^x(\de y)$ and $\mu^x(\de y)$ is the invariant distribution of the dynamics 
\begin{equation}\label{ODE averaging}
\left\{\begin{array}{ll} 
\dot{X}= 0     \\
\dot{Y}= F(X,Y)
\end{array} \right.
\end{equation}
with $X(0)=x$.
\end{theorem}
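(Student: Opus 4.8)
The plan is to realise the ``piecewise-constant slow variable'' comparison sketched before the statement. Fix a parameter $\zeta>0$ (to be sent to $+\infty$ only at the very end) and partition $[0,T]$ into consecutive windows $[t_j,t_{j+1})$ of length $\Delta_N=\zeta/N^\gamma$, so that there are $\sim TN^\gamma/\zeta$ of them; on the $j$-th window I freeze the slow variable at $x_j:=X^N(t_j)$ and compare $Y^N$ with the deterministic fast flow in which the slow coordinate is kept equal to $x_j$. Before anything else I would record the relevant facts about this frozen flow. Since $F$ is continuous on the connected set $E$ with $\inf_E|F|\geq\epsilon>0$, it has constant sign, say $F>0$; hence for each $x\in I$ the equation $\dot Y=F(x,Y)$ on $\mathbb{R}/2\pi\mathbb{Z}$ is a strictly monotone flow with period $\mathcal T(x)=\int_0^{2\pi}F(x,y)^{-1}\,\de y$ (staying in a fixed compact subinterval of $(0,+\infty)$ by $\epsilon\leq F\leq\|F\|_\infty$) and unique invariant law $\mu^x(\de y)=\mathcal T(x)^{-1}F(x,y)^{-1}\,\de y$. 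A change of variables shows that the time integral of $\xi(x,\cdot)$ along the sped-up flow $\dot Y=N^\gamma F(x,Y)$ over one period equals exactly $(\mathcal T(x)/N^\gamma)\,\bar\xi(x)$, so a window of length $\Delta_N$, which for $\zeta$ large contains $\lfloor \zeta/\mathcal T(x)\rfloor\geq 1$ complete periods with a defect of at most one period, satisfies
\begin{equation*}
\Big|\int_s^{s+\Delta_N}\xi\big(x,\tilde Y(t)\big)\,\de t-\Delta_N\,\bar\xi(x)\Big|\leq\frac{C_0}{N^\gamma}
\end{equation*}
for every solution $\tilde Y$ of $\dot{\tilde Y}=N^\gamma F(x,\tilde Y)$ and every $s$, with $C_0=C_0(\epsilon,\|\xi\|_\infty)$ \emph{uniform} in $x$, in $s$, in the initial phase, and in $\zeta$. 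Finally, $\bar\xi$ is Lipschitz, being a ratio of Lipschitz functions ($\xi$ and $F$) with denominator $\mathcal T$ bounded below.

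The core estimate is a pathwise control of $Y^N$ on each window. Write $Y^N(t)=Y^N(t_j)+\int_{t_j}^t\mathcal L_Np_y(X^N(s),Y^N(s))\,\de s+M^N(t)-M^N(t_j)$ and insert the representation of $\mathcal L_Np_y$ from hypothesis (ii); letting $\tilde Y_j$ solve $\dot{\tilde Y}_j=N^\gamma F(x_j,\tilde Y_j)$ with $\tilde Y_j(t_j)=Y^N(t_j)$, the deviation $\delta(t):=|Y^N(t)-\tilde Y_j(t)|$ (on a common lift) satisfies, by the Lipschitz property of $F$ with constant $L$,
\begin{equation*}
\delta(t)\leq N^\gamma L\int_{t_j}^t\delta(s)\,\de s+\zeta L\sup_{[t_j,t_{j+1}]}\big|X^N-x_j\big|+\mathcal M^N_j+\eta_N(\zeta),\qquad t\in[t_j,t_{j+1}),
\end{equation*}
where $\mathcal M^N_j:=\sup_{[t_j,t_{j+1}]}|M^N-M^N(t_j)|$ and $\eta_N(\zeta):=\zeta|\delta_N|+O(\zeta\|G\|_\infty/N^\gamma)+o_\zeta(1)$ gathers the $N^\gamma\delta_N$, $G$ and $o(1)$ contributions integrated over a window of length $\zeta/N^\gamma$ — note $\eta_N(\zeta)\to0$ as $N\to\infty$ for each fixed $\zeta$, since $\delta_N\to0$ and the $o(1)$ in (ii) is uniform on the compact state space. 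Because the window has length $\zeta/N^\gamma$, Gronwall's lemma gives $\sup_{[t_j,t_{j+1}]}\delta\leq e^{\zeta L}\big(\zeta L\sup_{[t_j,t_{j+1}]}|X^N-x_j|+\mathcal M^N_j+\eta_N(\zeta)\big)$, the exponential factor being now harmless. Taking expectations and using hypotheses (i) and (iii) with the stopping time $\tau=t_j$ and window $\zeta/N^\gamma$, then multiplying by $\Delta_N$ and summing the $\sim TN^\gamma/\zeta$ windows, one obtains
\begin{equation*}
E\Big[\sum_j\int_{t_j}^{t_{j+1}}\big|\xi(x_j,Y^N(t))-\xi(x_j,\tilde Y_j(t))\big|\,\de t\Big]\leq \mathrm{Lip}(\xi)\,Te^{\zeta L}\big(\zeta LC_\zeta e(N)+\bar C_\zeta\bar e(N)+\eta_N(\zeta)\big)\xrightarrow{N\to\infty}0
\end{equation*}
for each fixed $\zeta$.

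Now I assemble. On each window the per-phase averaging bound above gives $|\int_{t_j}^{t_{j+1}}\xi(x_j,\tilde Y_j(t))\,\de t-\Delta_N\bar\xi(x_j)|\leq C_0/N^\gamma$, whose sum over the windows is at most $TC_0/\zeta$; replacing $X^N(t)$ by $x_j$ inside $\xi$ costs, by (i) again, at most $\mathrm{Lip}(\xi)\,TC_\zeta e(N)\to0$; and $\sum_j\Delta_N\bar\xi(x_j)$ differs from the Riemann integral $\int_0^T\bar\xi(X^N(t))\,\de t$ by at most $\mathrm{Lip}(\bar\xi)\,TC_\zeta e(N)\to0$, again by (i). Combining with the previous display,
\begin{equation*}
\limsup_{N\to\infty}E\Big[\Big|\int_0^T\xi(X^N(t),Y^N(t))\,\de t-\int_0^T\bar\xi(X^N(t))\,\de t\Big|\Big]\leq\frac{TC_0}{\zeta}\qquad\text{for all }\zeta>0,
\end{equation*}
so the difference of these two integrals tends to $0$ in $L^1$, hence in probability. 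Since $\bar\xi$ is bounded and continuous, the functional $x(\cdot)\mapsto\int_0^T\bar\xi(x(t))\,\de t$ is bounded and continuous on $\mathcal D([0,T];I)$, so hypothesis (i) and the continuous mapping theorem give that $\int_0^T\bar\xi(X^N(t))\,\de t$ converges weakly to $\int_0^T\bar\xi(\bar X(t))\,\de t$; Slutsky's lemma then yields the claimed weak convergence of $\int_0^T\xi(X^N(t),Y^N(t))\,\de t$ to $\int_0^T\bar\xi(\bar X(t))\,\de t$.

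The main obstacle is precisely this pathwise comparison with the frozen flow: a single Gronwall estimate over $[0,T]$ would blow up like $e^{N^\gamma LT}$, which forces one to work on windows of length $\Theta(1/N^\gamma)$ — the intrinsic time scale of the fast rotation — on which Gronwall costs only a bounded factor $e^{\zeta L}$. The subtlety is then that the per-window error, although it vanishes for each fixed $\zeta$ as $N\to\infty$, is \emph{not} summable over the $\Theta(N^\gamma)$ windows in supremum norm; it survives only because it enters weighted by the window length $\zeta/N^\gamma$, and because — by the change-of-variables computation — the time average of $\xi$ along the frozen flow over such a window is already within $O(1/N^\gamma)$ of $\bar\xi(x)$ regardless of the phase, so that the final $\zeta\to\infty$ limit closes the argument. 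The remaining, more routine, ingredients are the verification that $\bar\xi$ is Lipschitz and the continuity of the time-integral functional on Skorokhod space.
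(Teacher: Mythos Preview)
Your argument is correct and follows the paper's strategy closely: freeze the slow variable on windows of length $\Theta(N^{-\gamma})$, compare $Y^N$ with the frozen deterministic flow via Gronwall (the window being short enough that the exponential factor stays bounded), and then use that the time average of $\xi$ along the frozen flow equals $\bar\xi$. The final reduction to $\int_0^T\bar\xi(X^N(t))\,\de t\Rightarrow\int_0^T\bar\xi(\bar X(t))\,\de t$ via continuity of the time-integral functional on $\mathcal D$ is exactly what the paper does (there by Skorohod representation and dominated convergence, in your write-up by continuous mapping plus Slutsky; these are equivalent).

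The one genuine technical difference is the choice of windows. The paper defines \emph{random} windows $[\mathfrak t_i,\mathfrak t_{i+1})$ recursively so that each has length exactly one period $\mathcal T(X^N(\mathfrak t_i))/N^\gamma$ of the frozen flow started at the current state; this makes the per-window averaging identity \emph{exact}, and only the single terminal window $[\mathfrak t_{\mathfrak n},T]$ contributes an $O(N^{-\gamma})$ defect. You instead take a \emph{deterministic} equispaced partition with step $\zeta/N^\gamma$, so every window has a phase defect $O(N^{-\gamma})$ summing to $O(\zeta^{-1})$, and you close by sending $\zeta\to\infty$ afterwards. Your route is arguably more elementary (no recursive random times, the $\mathfrak t_i$ are stopping times so hypotheses (i), (iii) apply equally well, but the bookkeeping is simpler with deterministic $t_j$), at the cost of the extra $\zeta$-limit; the paper's route avoids that limit but needs the observation that the number of random windows is a.s.\ between $TN^\gamma/\bar t$ and $TN^\gamma/\underline t$, which you effectively replace by the uniform bound $C_0/N^\gamma$ on the phase defect. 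Either way the heart of the matter---the Gronwall comparison on each short window using (i), (ii), (iii)---is identical.
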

\begin{proof}
Arguing as in \cite{DAIPRA2018}, by virtue of the Skorohod representation theorem (see, e.g. \cite{bokar}, Ch.\ 2 Theorem 2.2.2), we can suppose that the processes $\{(X^N,Y^N); N\geq1\}$ are defined on a suitable probability space where $\{X^N;N\geq1\}$ converges to $\bar{X}$ almost surely. We shall prove that on this space we have 
\begin{equation*}
\int_0^T\xi\left(X^N(t),Y^N(t)\right)\de t \xrightarrow{L^1}\int_0^T\bar{\xi}\big(\bar{X}(t)\big)\de t.
\end{equation*}
Let us pose $\mathcal{T}(x):=\int_0^{2\pi}\frac{1}{F(x,y)}\de y$. Observe that, since by assumption $\ \epsilon\leq F\leq \|F\|_{\infty}$, we have, for all $x\in I$, 
\begin{equation}\label{prop uniform bound random time}
\underline{t}:=\frac{2\pi}{\|F\|_\infty}\leq \mathcal{T}(x)\leq \frac{2\pi}{\epsilon}=:\bar{t}\ .
\end{equation}  
 \\
Then the invariant distribution of \eqref{ODE averaging} is given by $\mu^x(\de y)=\frac{1}{\mathcal{T}(x)F(x,y)}\de y$ and  the function $\bar{\xi}$ is Lipschitz. Writing
\begin{multline}\label{ineqTermB}
E\Big[\big|\int_0^T\xi\left(X^N(t),Y^N(t)\right)\de t -\int_0^T\bar{\xi}\big(\bar{X}(t)\big)\de t\big|\Big]\leq\underbrace{E\Big[\big|\int_0^T\xi\left(X^N(t),Y^N(t)\right)\de t -\int_0^T\bar{\xi}\big(X^N(t)\big)\de t\big|\Big]}_B\\
 +E\Big[\big|\int_0^T\bar{\xi}\big(X^N(t)\big)\de t -\int_0^T\bar{\xi}\big(\bar{X}(t)\big)\de t\big|\Big], 
\end{multline}
the last term in the above inequality goes to zero as $N\to\infty$ thanks to the regularity of $\bar{\xi}$, the convergence of $X^N$ to $\bar{X}$ and the dominated convergence theorem. \\
In order to study the term $B$ we introduce a suitable  construction. Fix $\mathfrak{t}_0=0$ and $X^N(0)=x_0,\  Y^N(0)=y_0$ as the initial conditions of 
\begin{equation*}
\left\{\begin{array}{ll}
 \dot{X_1}=0 \\
\dot{Y_1}=NF(X_1,Y_1)
\end{array} \right.
\end{equation*}
and let $Y_1$ be the solution of the above ODE. By the definition of $\mathcal{T}$ we have  $Y_1(\mathcal{T}(x_0)/N)=2\pi+y_0$. \\
Now, for $i\geq 0$, we proceed recursively as follows:
 let $X^N(\mathfrak{t}_i)=x_i$ and $Y^N(\mathfrak{t}_i)= y_i$ be the initial conditions of the equation
\begin{equation*}
\left\{\begin{array}{ll}
 \dot{X}_{i+1}=0\\
\dot{Y}_{i+1}=NF(X_{i+1},Y_{i+1})
\end{array} \right.
\end{equation*}
and denote by $Y_{i+1}$ its solution. Let $\mathcal{T}(x_{i})<\infty$ such that $Y_{i+1}(\mathcal{T}(x_i)/N)=2\pi+y_i$. 
We pose $\mathfrak{t}_{i+1}=\mathfrak{t}_i+\mathcal{T}(x_{i})/N$ and consider the process defined by $(\tilde{X}(t), \tilde{Y}(t)):=(x_i, Y_i(t))$ if $t\in [\mathfrak{t}_i, \mathfrak{t}_{i+1})$.\\
Note that, by \eqref{prop uniform bound random time}, letting $\underline{n}:=\frac{NT}{\bar{t}}$ and $\overline{n}:=\frac{NT}{\underline{t}}$,  it follows that: 
\begin{equation*}
P\big(\underline{n}\leq \big| [0,T]\cap \{\mathfrak{t}_i; i\geq 0\}\big|\leq \overline{n}\big)=1
\end{equation*}
where for a given set $A$, $\ |A|$ denotes its cardinality.

Now, for each $\omega$,  define $\mathfrak{n}(\omega)=\inf\{i: \mathfrak{t}_{i+1}(\omega)>T\}$. Then

\begin{eqnarray*}
\int_0^T \xi\big(X^N(t), Y^N(t)\big) \de t&=& \sum_{i=0}^{\mathfrak{n}-1} \int_{\mathfrak{t}_i}^{\mathfrak{t}_{i+1}} \xi\big(X^N(t), Y^N(t)\big) \de t+ \int_{\mathfrak{t}_{\mathfrak{n}}}^T \xi\big(X^N(t), Y^N(t)\big) \de t\\
&=& 
 \sum_{i=0}^{\overline{n}-1} \int_{\mathfrak{t}_i\wedge T}^{\mathfrak{t}_{i+1}\wedge T} \xi\big(X^N(t), Y^N(t)\big) \de t. 
\end{eqnarray*}

For the term $B$ in \eqref{ineqTermB} it holds:
\begin{multline*}
B \underbrace{\leq \sum_{i=0}^{\overline{n}-1}E\Big[\int_{\mathfrak{t}_i\wedge T}^{\mathfrak{t}_{i+1}\wedge T}\Big|\xi\left(X^N(t),Y^N(t)\right)-\xi\left(X^N(\mathfrak{t}_i),Y^N(t)\right)\Big|\de t\Big]}_{B_1} \\
+  \underbrace{\sum_{i=0}^{\overline{n}-1}E\Big[\int_{\mathfrak{t}_i\wedge T}^{\mathfrak{t}_{i+1}\wedge T}\Big|\xi\left(X^N(\mathfrak{t}_i),Y^N(t)\right)-\xi\left(X^N(\mathfrak{t}_i),Y_i(t)\right)\Big|\de t\Big]}_{B_2}\\ 
+  \underbrace{\sum_{i=0}^{\overline{n}-1}E\Big[\Big|\int_{\mathfrak{t}_i\wedge T}^{\mathfrak{t}_{i+1}\wedge T}\xi\left(X^N(\mathfrak{t}_i),Y_i(t)\right)-\bar{\xi}\left(X^N(t)\right)\de t\Big|\Big]}_{B_3}.
\end{multline*}

We study separately each term of the above  inequality. 
 By hypothesis, the function $\xi$ is Lipschitz (with constant, say, $L_{\xi}$); using \eqref{prop uniform bound random time} and \eqref{eq: hyp i)} we have
\begin{eqnarray*}
B_1&\leq & \sum_{i=0}^{\bar{n}-1}E\Big[ L_{\xi}\int_{\mathfrak{t}_i\wedge T}^{(\mathfrak{t}_{i}+\bar{t}/N)\wedge T}\left|X^N(t)-X^N(\mathfrak{t}_i)\right|\de t\Big] \\ 
& \leq &  
\sum_{i=0}^{\bar{n}-1}L_\xi\frac{\bar{t}}{N}E\Big[\sup_{t\in[\mathfrak{t}_i\wedge T,(\mathfrak{t}_i+\bar{t}/N)\wedge T]}\left|X^N(t)-X^N(\mathfrak{t}_i)\right|\Big]\leq \overline{n}L_{\xi}\frac{\bar{t}}{N}C_{\bar{t}}e(N)\xrightarrow{N\to \infty} 0.
\end{eqnarray*}

Analogously, for the term $B_2$  we obtain
\begin{equation*}
B_2\leq \sum_{i=0}^{\bar{n}-1}L_\xi\frac{\bar{t}}{N}E\Big[\sup_{t\in[\mathfrak{t}_i\wedge T,(\mathfrak{t}_i+\bar{t}/N)\wedge T]}\left|Y^N(t)-Y_i(t)\right|\Big].
\end{equation*}
By hypothesis $ii)$ and by the construction of $Y_i$  we can write:
\begin{eqnarray}
Y^N(t\wedge T)-Y_i(t \wedge T)  =  N\int_{\mathfrak{t}_i\wedge T}^{t\wedge T}F\big(X^N(s),Y^N(s)\big)-F\big(X^N(\mathfrak{t}_i),Y_i(s)\big) \de s  +N\int_{\mathfrak{t}_i\wedge T}^{t\wedge T}\delta_N\de s
\nonumber \\
 + \int_{\mathfrak{t}_i\wedge T}^{t\wedge T}G\big(X^N(s),Y^N(s)\big)\de s + M^N(t\wedge T)- M^N(\mathfrak{t}_i\wedge T) + o(1). \label{eq: Y-Y_i} 
\end{eqnarray}
The function $F$ is Lipschitz in both variables with constant, say $L_F$, then from \eqref{eq: Y-Y_i}  we obtain
\begin{eqnarray*}
\sup_{t\in[\mathfrak{t}_i\wedge T,(\mathfrak{t}_i+\bar{t}/N)\wedge T]}\left|Y^N(t)-Y_i(t)\right| &\leq &  NL_F\int_{\mathfrak{t}_i\wedge T}^{(\mathfrak{t}_i+\bar{t}/N)\wedge T}\sup_{h\in[\mathfrak{t}_i\wedge T,s\wedge T]}\left|X^N(h)-X^N(\mathfrak{t}_i)\right|\de s  \nonumber\\
& + &  NL_F \int_{\mathfrak{t}_i\wedge T}^{(\mathfrak{t}_i+\bar{t}/N)\wedge T}\sup_{h\in[\mathfrak{t}_i\wedge T,s\wedge T]}\left|Y^N(h)-Y_i(h)\right|\de s   \nonumber \\
 &+ &\sup_{t\in[\mathfrak{t}_i\wedge T,(\mathfrak{t}_i+\bar{t}/N)\wedge T]}|M^N(t)- M^N(\mathfrak{t}_i)| \\
 &+& \bar{t}\delta_N+\frac{\bar{t}}{N}\|G\|_\infty +o(1).
\end{eqnarray*}
Define $f^i(s):=\left[\sup_{t\in[\mathfrak{t}_i \wedge T,s\wedge T]}\left|Y^N(t)-Y_i(t)\right|\right]$ then 
\begin{eqnarray*}
f^i(\mathfrak{t}_i+\bar{t}/N) & \leq & NL_F\int_{\mathfrak{t}_i\wedge T}^{(\mathfrak{t}_i+\bar{t}/N)\wedge T}f^i(s)\de s \\
& & + NL_F \int_{\mathfrak{t}_i\wedge T}^{(\mathfrak{t}_i+\bar{t}/N)\wedge T}\sup_{h\in[\mathfrak{t}_i\wedge T,s\wedge T]}\left|X^N(h)-X^N(\mathfrak{t}_i)\right|\de s \\
& & +\bar{t}\delta_N+ \frac{\bar{t}}{N}\|G\|_\infty + \sup_{t\in[\mathfrak{t}_i\wedge T,(\mathfrak{t}_i+\bar{\tau}/N)\wedge T]}\left|M^N(t)- M^N(\mathfrak{t}_i)\right| +o(1) \\
& & = NL_F\int_{\mathfrak{t}_i\wedge T}^{(\mathfrak{t}_i+\bar{t}/N)\wedge T}f^i(s)\de s + R_i\big((t_i+\bar{t}/N)\wedge T\big). 
\end{eqnarray*}

By Gronwall inequality we obtain
\begin{equation*}
f^i(\mathfrak{t}_i+\bar{t}/N) \leq e^{L_F\bar{t}}R_i\big((t_i+\bar{t}/N)\wedge T\big).
\end{equation*}
Then, taking expectations and using \eqref{eq: hyp i)} and \eqref{eq: hyp iii)} we have

\begin{equation*}
B_2\leq L_\xi\frac{\overline{n}\bar{t}}{N}e^{L_F\bar{t}}\Big(L_F\bar{t}C_{\bar{t}}e(N)+\bar{t}\delta_N+\frac{\bar{t}}{N}\|G\|+\bar{C}_{\bar{t}}\bar{e}(N)	\Big)\xrightarrow{N\to\infty}0.
\end{equation*} 
Recalling the construction of the process $\tilde{Y}$, we change variable in each integral of the sum in $B_3$, setting $\theta=Y_i(t)$. Note that if $\mathfrak{t}_{i+1}<T$ using the  periodicity of $F$ and  $\xi$ we can write each integral as  
\begin{eqnarray}
\int_{\mathfrak{t}_i}^{\mathfrak{t}_{i+1}}\xi\left(X^N(\mathfrak{t}_i),Y_i(t)\right)\de t & = &\int_{Y_i(\mathfrak{t}_i)}^{Y_i(\mathfrak{t}_{i+1})}\xi\left(X^N(\mathfrak{t}_i),\theta\right)\frac{\de\theta}{NF\left(X^N(\mathfrak{t}_i),\theta\right)}  \nonumber \\
& = &  \int_{0}^{2\pi}\xi\big(X^N(\mathfrak{t}_i),\theta\big)\frac{\de\theta}{NF\big(X^N(\mathfrak{t}_i),\theta\big)}.  \nonumber 
\end{eqnarray}

Using the definition of the invariant measure $\mu$ we have
\begin{multline*}
B_3\leq \sum_{i=0}^{\overline{n}-1}E\Big[\Big|(\mathfrak{t}_{i+1}\wedge T-\mathfrak{t}_i \wedge T)\int_0^{2\pi}\xi\left(X^N(\mathfrak{t}_i),\theta\right)\mu^{X^N(\mathfrak{t}_i)}(\de\theta)-\int_{\mathfrak{t}_i \wedge T}^{\mathfrak{t}_{i+1}\wedge T}\int_0^{2\pi}\xi\left(X^N(t),\theta\right)\mu^{X^N(t)}(\de\theta)\de t\Big|\Big] \\
+\sum_{i=0}^{\overline{n}-1} E\Big[\mathbbm{1}_{\mathfrak{t}_i\leq T<\mathfrak{t}_{i+1}}\int_{\mathfrak{t}_i}^{T}\big|\xi\left(X^N(\mathfrak{t}_i),Y_i(t)\right)\big|\de t  \Big].
\end{multline*}

Note that the only non-zero term of the last sum is the one corresponding to $i=\mathfrak{n}$ and that $\frac{\underline{t}}{N} \leq |T-\mathfrak{t}_{\mathfrak{n}}|\leq \frac{\bar{t}}{N}$ for all $N$ . Therefore, recalling that $\xi$ is uniformly bounded on $E$   the last term of the inequality tends to zero as $N\to\infty$. \\

For the first sum, arguing as for the term $B_1$ and  applying again \eqref{eq: hyp i)} we obtain 
\begin{equation*}
B_3 \leq \sum_{i=0}^{\overline{n}-1} E\Big[\Big|(\mathfrak{t}_{i+1}\wedge T-\mathfrak{t}_i \wedge T)\bar{\xi}\left(X^N(\mathfrak{t}_i)\right)-\int_{\mathfrak{t}_i \wedge T}^{\mathfrak{t}_{i+1} \wedge T}\bar{\xi}\big(X^N(t)\big)\de t\Big|\Big]\leq K e(N)\xrightarrow{N\to\infty}0
\end{equation*}
where $K$ is a suitable constant.
\end{proof}

\subsection{Proof of Theorem \ref{Main}}\label{proofMain} 

In this subsection we use the following notation:
\begin{equation*}
\Big\{\big\{\big(\tilde{\bm{H}}^N(t),\tilde{\bm{\Theta}}^N(t)\big)\big\}_{t\in [0,T]}\ ; N\geq1 \Big\} := \Big\{ \big\{\big(\bm{H}^N(Nt), \bm{\Theta}^N(Nt)\big)\big\}_{t\in[0,T]}\ ;N\geq1 \Big\}.
\end{equation*} 

The main tool required for the proof of Theorem \ref{Main} is the following  proposition:
\begin{prop}\label{prop same distribution}
For all $\epsilon,\epsilon'>0$ and for any initial condition $h_0\in(-\frac{1}{\epsilon}+\delta,-\epsilon'-\delta)$,  the sequence $\{\tilde{\bm{H}}^N(\cdot\wedge\uptau_{-\frac{1}{\epsilon},-\epsilon'}^N);N\geq1\}$ weakly converges (up to passing to a subsequence) , as $N\to\infty$, to a continuous process  $\tilde{\bm{H}}_{-\frac{1}{\epsilon},-\epsilon'}$. Moreover, let $\bm{H}$ be the unique solution of (\ref{equation SDE}) with $\bm{H}(0)=h_0$. For a fixed $\delta>0$, define
\begin{equation*}
\tilde{\uptau}_{-\frac{1}{\epsilon}+\delta,-\epsilon'-\delta}=\inf\left\{t\in[0,T] : \tilde{\bm{H}}_{-\frac{1}{\epsilon},-\epsilon'}(t)\notin(-\frac{1}{\epsilon}+\delta,-\epsilon'-\delta) \right\}
\end{equation*}
and 
\begin{equation*}
\uptau_{-\frac{1}{\epsilon}+\delta,-\epsilon'-\delta}=\inf\left\{t\in[0,T] : \bm{H}(t)\notin(-\frac{1}{\epsilon}+\delta,-\epsilon'-\delta)\right\}.
\end{equation*}
Then, $\tilde{\bm{H}}_{-\frac{1}{\epsilon},-\epsilon'}(\cdot\wedge\tilde{\uptau}_{-\frac{1}{\epsilon}+\delta,-\epsilon'-\delta}) $ and $ \bm{H}(\cdot\wedge \uptau_{-\frac{1}{\epsilon}+\delta,-\epsilon'-\delta})$ have the same distribution.
\end{prop}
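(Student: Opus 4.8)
The plan is to combine a tightness argument for the time-rescaled processes with the averaging principle of Theorem~\ref{theorem averaging principle} and the well-posedness of \eqref{equation SDE} established in Proposition~\ref{existenceuniqueness}, working throughout with the rescaled generator $\tilde{\mathcal K}^{\epsilon,\epsilon'}_N$ of \eqref{generator micro01}. For the first assertion, apply $\tilde{\mathcal K}^{\epsilon,\epsilon'}_N$ to $f(h)=h$ and $f(h)=h^2$ (which do not depend on $\theta$) and use optional stopping to write
\[
\tilde{\bm H}^N(t\wedge\uptau^N_{-1/\epsilon,-\epsilon'})=h_0+\int_0^{t\wedge\uptau^N_{-1/\epsilon,-\epsilon'}}\!\!\big(a^H(\tilde{\bm H}^N(s),\tilde{\bm\Theta}^N(s))+o(1)\big)\,\de s+\bm M^N_h(t),
\]
where $\bm M^N_h$ is a martingale whose predictable quadratic variation grows at a bounded rate and the jumps of $\tilde{\bm H}^N$ have size $O(1/N)$; here one uses that $a^H$ and $a^{HH}$ are continuous, hence bounded, on the compact set $[-1/\epsilon,-\epsilon']\times\mathbb R/2\pi\mathbb Z$. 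By the standard relative-compactness criteria for such semimartingales (\cite{EthierKurtz}, Ch.~3), $\{\tilde{\bm H}^N(\cdot\wedge\uptau^N_{-1/\epsilon,-\epsilon'});N\ge1\}$ is relatively compact in $\mathcal D([0,T];\mathbb R)$, and the vanishing jump sizes force every limit point to be continuous. Fix a subsequence, not relabelled, along which $\tilde{\bm H}^N(\cdot\wedge\uptau^N_{-1/\epsilon,-\epsilon'})$ converges weakly to a continuous process $\tilde{\bm H}_{-1/\epsilon,-\epsilon'}$, and use the Skorohod representation theorem to make this convergence almost sure.

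\emph{The limit solves a stopped martingale problem.} Fix $\delta>0$, set $U'=(-1/\epsilon+\delta,-\epsilon'-\delta)$ and $\tilde{\uptau}^N_{U'}=\inf\{t\in[0,T]:\tilde{\bm H}^N(t)\notin U'\}$. Since $\overline{U'}$ lies in the open interval $(-1/\epsilon,-\epsilon')$ one has $\uptau^N_{-1/\epsilon,-\epsilon'}\ge\tilde{\uptau}^N_{U'}$, so up to time $\tilde{\uptau}^N_{U'}$ the indicator in \eqref{generator micro01} equals $1$; hence, for $g\in\mathcal C^3$,
\[
g\big(\tilde{\bm H}^N(t\wedge\tilde{\uptau}^N_{U'})\big)-g(h_0)-\int_0^{t\wedge\tilde{\uptau}^N_{U'}}\!\big[a^Hg'+a^{HH}g''\big]\big(\tilde{\bm H}^N(s),\tilde{\bm\Theta}^N(s)\big)\,\de s+o(1)
\]
is an $\{\mathcal F^N_t\}$-martingale. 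I then invoke Theorem~\ref{theorem averaging principle} with $\gamma=1$, $X^N=\tilde{\bm H}^N$, $Y^N=\tilde{\bm\Theta}^N$ (extending $a^H,a^{HH},F,G$ smoothly and boundedly outside $[-1/\epsilon,-\epsilon']$ with $\inf|F|>0$ preserved, so that ii) holds globally) and $\xi(h,\theta)=a^H(h,\theta)g'(h)+a^{HH}(h,\theta)g''(h)$. Hypothesis i) follows from the weak convergence above together with \eqref{eq: hyp i)}, which holds with $e(N)=O(N^{-1/2})$ since over a rescaled window of length $\zeta/N$ the drift of $\tilde{\bm H}^N$ contributes $O(\zeta/N)$ and its martingale part $O(\sqrt{\zeta/N})$; hypothesis iii) follows analogously with $\bar e(N)=O(N^{-1/2})$, using the order-$N^{-1}$ size of the jumps of the $\Theta$-component and the boundedness of $a^{\Theta\Theta}$. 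Since $\bar\xi(h)=\int_0^{2\pi}\xi(h,\theta)\mu^h(\de\theta)=\bar a^H(h)g'(h)+\bar a^{HH}(h)g''(h)$ by \eqref{coefficientsSDEa^H}--\eqref{coefficientsSDEa^HH}, the averaging principle — in the functional form $\{\int_0^t\xi(X^N,Y^N)\,\de s\}_{t\in[0,T]}\to\{\int_0^t\bar\xi(\bar X)\,\de s\}_{t\in[0,T]}$ that its proof in fact delivers — lets me pass to the limit along the subsequence to obtain that
\[
g\big(\tilde{\bm H}_{-1/\epsilon,-\epsilon'}(t\wedge\tilde{\uptau}_{U'})\big)-g(h_0)-\int_0^{t\wedge\tilde{\uptau}_{U'}}\!\big[\bar a^Hg'+\bar a^{HH}g''\big]\big(\tilde{\bm H}_{-1/\epsilon,-\epsilon'}(s)\big)\,\de s
\]
is a martingale for every $g\in\mathcal C^3$, i.e.\ $\tilde{\bm H}_{-1/\epsilon,-\epsilon'}(\cdot\wedge\tilde{\uptau}_{U'})$ solves on $U'$ the martingale problem associated with \eqref{equation SDE}, started at $h_0$.

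\emph{Uniqueness and conclusion.} By the first part of the proof of Proposition~\ref{existenceuniqueness}, $\bar a^H$ and $\bar a^{HH}$ are Lipschitz on $\overline{U'}$ and $\bar a^{HH}$ is bounded away from $0$ there; hence, by Theorem~\ref{THMKS} and the equivalence between stopped martingale problems and weak solutions of SDEs on an interval, this stopped martingale problem has a unique solution, namely the law of $\bm H(\cdot\wedge\uptau_{-1/\epsilon+\delta,-\epsilon'-\delta})$ with $\bm H$ the solution of \eqref{equation SDE} and $\bm H(0)=h_0$. Therefore $\tilde{\bm H}_{-1/\epsilon,-\epsilon'}(\cdot\wedge\tilde{\uptau}_{-1/\epsilon+\delta,-\epsilon'-\delta})$ and $\bm H(\cdot\wedge\uptau_{-1/\epsilon+\delta,-\epsilon'-\delta})$ have the same law, which is the assertion. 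The main obstacle lies in making the stopping times compatible under the limit: the $N$-processes may overshoot $(-1/\epsilon,-\epsilon')$ by $O(1/N)$, and one cannot a priori rule out that $\tilde{\bm H}_{-1/\epsilon,-\epsilon'}$ exits $\overline{U'}$ non-transversally; passing to the $\delta$-shrunk interval $U'$ is precisely the device that circumvents this — on $[0,\tilde{\uptau}^N_{U'}]$ the $N$-dynamics is genuinely unstopped and $\overline{U'}$ lies strictly inside the region where all coefficients are regular — and the limit $\delta\downarrow0$ is taken only afterwards. The remaining technical point is the verification of hypotheses i) and iii) of Theorem~\ref{theorem averaging principle} uniformly over stopping times.
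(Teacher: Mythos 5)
There is a genuine gap at the central step, namely where you stop the \emph{prelimit} processes at $\tilde{\uptau}^N_{U'}$ and then ``pass to the limit'' to conclude that $g\big(\tilde{\bm{H}}_{-\frac{1}{\epsilon},-\epsilon'}(t\wedge\tilde{\uptau}_{U'})\big)-g(h_0)-\int_0^{t\wedge\tilde{\uptau}_{U'}}\bar{\mathcal{A}}g\big(\tilde{\bm{H}}_{-\frac{1}{\epsilon},-\epsilon'}(s)\big)\de s$ is a martingale. This requires that $\tilde{\uptau}^N_{U'}\to\tilde{\uptau}_{U'}$ (or at least that the stopped paths converge to the limit path stopped at its own exit time), i.e.\ a.s.\ continuity of the exit-time functional along the limit paths. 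That is exactly the ``non-transversal exit'' problem you name at the end, and it is not resolved by working in the $\delta$-shrunk interval: shrinking the interval changes which boundary is relevant but does not prevent the limit path from touching $\partial U'$ without crossing, and one cannot invoke non-degeneracy of $\bar{a}^{HH}$ to get transversality because at this stage the law of the limit is precisely what is being identified (the argument would be circular). A second, related problem is that Theorem \ref{theorem averaging principle} is stated (and proved) for integrals $\int_0^t\xi(X^N,Y^N)\de s$ with deterministic time horizon; your prelimit martingale involves $\int_0^{t\wedge\tilde{\uptau}^N_{U'}}$ with a random upper limit, so even the convergence of the compensator term again hinges on controlling the stopping times.

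The paper circumvents both difficulties by a different arrangement: it first proves the \emph{unstopped} martingale property of the limit, for test functions $f\in\mathcal{C}^3_c\big((-\frac{1}{\epsilon},-\epsilon')\big)$. For such $f$ the compact support converts $\int_{s\wedge\uptau^N_{-\frac{1}{\epsilon},-\epsilon'}}^{t\wedge\uptau^N_{-\frac{1}{\epsilon},-\epsilon'}}\mathcal{A}^{\epsilon,\epsilon'}f$ into $\int_s^t\mathcal{A}f$ evaluated along the stopped process, so all integration limits are deterministic, Proposition \ref{Averaging principle} applies directly (a.s.\ along a subsequence after Skorohod representation), and one obtains that $\bm{Z}^f_{-\frac{1}{\epsilon},-\epsilon'}$ is a martingale. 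Only \emph{then} does one stop, by optional stopping of this limit martingale at the limit's own exit time $\tilde{\uptau}_{-\frac{1}{\epsilon}+\delta,-\epsilon'-\delta}$ -- an operation performed entirely on the limit side, with no convergence of stopping times needed. In that scheme the $\delta$-shrinkage serves a different purpose than the one you assign to it: $\mathcal{C}^3_c\big((-\frac{1}{\epsilon},-\epsilon')\big)$ is not measure determining on $\mathcal{D}\big([0,T];[-\frac{1}{\epsilon},-\epsilon']\big)$, but every $g\in\mathcal{C}^3_0\big([-\frac{1}{\epsilon}+\delta,-\epsilon'-\delta]\big)$ extends to such an $f$, so the stopped martingale problem is obtained on $U'=(-\frac{1}{\epsilon}+\delta,-\epsilon'-\delta)$, and uniqueness follows from well-posedness (Proposition \ref{existenceuniqueness}) together with Theorem \ref{teo:Kurtz stopping}. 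Your tightness/continuity argument and your final uniqueness step are fine and essentially match the paper's; to repair the middle step you should either adopt the compact-support device above or supply an independent proof that the exit times converge, neither of which is in your write-up.
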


The proof of this proposition needs some preliminary results.

\paragraph{Tightness.}  Let $T>0$ be fixed and let $\epsilon,\epsilon'>0$ be small constants. In order to prove the tightness for the sequence of stopped processes $\big\{ \tilde{\bm{H}}^N(\cdot\wedge\uptau_{-\frac{1}{\epsilon},-\epsilon'}^N); N\geq1 \big\}$ we  use the \emph{Aldous' tightness criterion} (see \cite{comets1988}), namely, we check the following sufficient conditions:

\begin{itemize}
\item[i)] for every $\varepsilon>0$ there exist a constant $C>0$ such that 
\begin{equation*}
\sup_N P\left(\sup_{t\in[0,T]} \left|\tilde{\bm{H}}^N(t\wedge\uptau_{-\frac{1}{\epsilon},-\epsilon'}^N)\right|\geq M  \right)\leq \varepsilon;
\end{equation*}
\item[ii)] for any $\varepsilon>0$ and $\alpha>0$ there exists $\delta>0$ such that
\begin{equation*}
\sup_N \sup_{\substack{0\leq\uptau_1\leq\uptau_2\leq (\uptau_1+\delta)\wedge T \\ \uptau_1,\uptau_2\ stopping\ times} } P\left( \left|\tilde{\bm{H}}^N(\uptau_2\wedge\uptau_{-\frac{1}{\epsilon},-\epsilon'}^N)-\tilde{\bm{H}}^N(\uptau_1\wedge\uptau_{-\frac{1}{\epsilon},-\epsilon'}^N)\right|\geq\alpha\right) \leq \varepsilon.
\end{equation*}
\end{itemize}

\begin{prop}\label{prop tightness H^N}
For any $T>0$, the sequence $\Big\{ \{\tilde{\bm{H}}^N(t\wedge\uptau^N_{-\frac{1}{\epsilon},-\epsilon'}) \}_{t\in [0,T]}\ ; N>1\Big\}$ is tight. 
\end{prop}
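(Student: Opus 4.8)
The plan is to verify the two conditions of the Aldous' tightness criterion stated just above the proposition, using the generator representation \eqref{generator micro001} from Proposition \ref{prop:GENH} together with the fact that all the coefficients $a^H, a^{HH}$ are bounded on the compact set $\left[-\frac{1}{\epsilon}, -\epsilon'\right]\times\mathbb{R}/2\pi\mathbb{Z}$. First I would observe that condition i) is almost immediate: by definition of the stopping time $\uptau^N_{-\frac{1}{\epsilon},-\epsilon'}$ the stopped process $\tilde{\bm{H}}^N(t\wedge\uptau^N_{-\frac{1}{\epsilon},-\epsilon'})$ takes values in the bounded interval $\left[-\frac{1}{\epsilon},-\epsilon'\right]$, so taking $M=\frac{1}{\epsilon}$ makes the probability in i) equal to zero for every $N$.

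The substance is in condition ii). Here I would use the standard martingale decomposition: since $\tilde{\bm{H}}^N$ is obtained from the process with generator $N\mathcal{K}^{\epsilon,\epsilon'}_N$ (the time-rescaled generator \eqref{generator micro01}, which I will denote $\tilde{\mathcal{K}}^{\epsilon,\epsilon'}_N$), applying it to the coordinate function $f(h,\theta)=h$ and to $f(h,\theta)=h^2$ gives that
\begin{equation*}
\tilde{\bm{H}}^N(t)-\tilde{\bm{H}}^N(0)-\int_0^t \big(\tilde{\mathcal{K}}^{\epsilon,\epsilon'}_N h\big)\big(\tilde{\bm{H}}^N(s),\tilde{\bm{\Theta}}^N(s)\big)\,\de s
\end{equation*}
is a martingale $\tilde{M}^N$, and similarly for the square, yielding a formula for the predictable quadratic variation $\langle\tilde M^N\rangle$. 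From \eqref{generator micro001}–\eqref{generator micro01} one reads off that $\tilde{\mathcal{K}}^{\epsilon,\epsilon'}_N h = a^H(h,\theta)\mathbbm{1}_{(-\frac{1}{\epsilon},-\epsilon')}(h) + o(1)$ (the $f_\theta$ term does not enter because $\partial_\theta h = 0$, which is precisely why $\bm{H}^N$ is the slow variable), and the generator applied to $h^2$ produces $\langle\tilde M^N\rangle$ with density $2 a^{HH}(h,\theta)\mathbbm{1}_{(-\frac{1}{\epsilon},-\epsilon')}(h)+o(1)$; both densities are bounded by a constant $C_{\epsilon,\epsilon'}$ uniformly in $N$ (up to the vanishing error) by the regularity of $a^H, a^{HH}$ on the relevant compact set. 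Then for stopping times $\uptau_1\le\uptau_2\le(\uptau_1+\delta)\wedge T$ I would bound, via the triangle inequality,
\begin{equation*}
E\big|\tilde{\bm{H}}^N(\uptau_2\wedge\uptau^N_{-\frac{1}{\epsilon},-\epsilon'})-\tilde{\bm{H}}^N(\uptau_1\wedge\uptau^N_{-\frac{1}{\epsilon},-\epsilon'})\big|
\le E\int_{\uptau_1}^{\uptau_2}\big|a^H\big|\,\de s + E\big|\tilde M^N(\uptau_2)-\tilde M^N(\uptau_1)\big|,
\end{equation*}
where the first term is at most $C_{\epsilon,\epsilon'}\,\delta$ (plus a vanishing correction) by the bound on $a^H$, and the second is controlled by the optional sampling theorem and the $L^2$ estimate $E|\tilde M^N(\uptau_2)-\tilde M^N(\uptau_1)|^2 = E(\langle\tilde M^N\rangle_{\uptau_2}-\langle\tilde M^N\rangle_{\uptau_1}) \le 2 C_{\epsilon,\epsilon'}\,\delta + o(1)$, hence by Cauchy–Schwarz is at most $\sqrt{2C_{\epsilon,\epsilon'}\delta}+o(1)$. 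Choosing $\delta$ small (and then $N$ large) and applying Markov's inequality gives condition ii).

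The main obstacle I anticipate is bookkeeping of the $o(1)$ and $o(1/N)$ error terms in \eqref{generator micro001}: one must be sure they are uniform on $\left[-\frac{1}{\epsilon},-\epsilon'\right]\times\mathbb{R}/2\pi\mathbb{Z}$ (which they are, by the compactness argument already used in the proof of Proposition \ref{prop:GENH}) so that they do not interfere with the Gronwall-type and martingale estimates; and one must handle carefully the indicator $\mathbbm{1}_{(-\frac{1}{\epsilon},-\epsilon')}$ at the stopping time, but since we stop exactly at $\uptau^N_{-\frac{1}{\epsilon},-\epsilon'}$ the process never leaves the interval before that time, so the indicator equals $1$ on the relevant integration range and causes no trouble. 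A minor point is that the fast angular component $\tilde{\bm\Theta}^N$ appears inside $a^H$ and $a^{HH}$, but since we only use \emph{uniform} bounds on these coefficients (not their averages — that is the content of the later averaging argument), no control on $\tilde{\bm\Theta}^N$ is needed here. Finally, tightness of the stopped processes in $\mathcal{D}([0,T];\mathbb{R})$ follows from i)–ii) together with the Aldous criterion as cited.
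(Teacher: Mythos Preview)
Your proposal is correct and follows essentially the same route as the paper: verify condition i) of Aldous' criterion by the trivial bound $|\tilde{\bm H}^N(t\wedge\uptau^N_{-\frac{1}{\epsilon},-\epsilon'})|\le \frac{1}{\epsilon}$, and verify condition ii) via the martingale decomposition, bounding the drift integral by $\|a^H\|_\infty\,\delta$ and the martingale increment through its predictable quadratic variation. The only cosmetic difference is that the paper computes the quadratic variation directly from the jump representation, writing $\langle\tilde{\bm M}^N\rangle$ as $\int \sum_{k,i}(\Delta_{i,k}\tilde{\bm H}^N)^2\tilde{\bm\lambda}^N_{i,k}\,\de s$ and using $(\Delta_{i,k}\tilde{\bm H}^N)^2\le C/N^2$, whereas you extract the same bound from the carr\'e du champ $\tilde{\mathcal K}^{\epsilon,\epsilon'}_N h^2-2h\,\tilde{\mathcal K}^{\epsilon,\epsilon'}_N h = 2a^{HH}+o(1)$; both lead to $E[(\tilde{\bm M}^N(\uptau_2)-\tilde{\bm M}^N(\uptau_1))^2]\le C_{\epsilon,\epsilon'}\delta$ and then Chebyshev/Markov finishes the job.
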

\begin{proof}
 As in Proposition \ref{prop uniform estimate H}, we can write
\begin{eqnarray}\label{esplicit expression H^N}
\tilde{\bm{H}}^N\big(s\wedge\uptau_{-\frac{1}{\epsilon},-\epsilon'}^N\big)- \tilde{\bm{H}}^N(0) &=& \int_{0}^{s\wedge\uptau_{-\frac{1}{\epsilon},-\epsilon'}^N} \tilde{\mathcal{K}}_{N}^{\epsilon,\epsilon'} p_x\left(\tilde{\bm{H}}^N(u),\tilde{\bm{\Theta}}^N(u)\right)\de u\nonumber\\& &+  \tilde{\bm{M}}^N(s\wedge\uptau_{-\frac{1}{\epsilon},-\epsilon'}^N), 
\end{eqnarray}
so that condition $i)$ of the \emph{Aldous' criterion} is immediately satisfied since $\left|\tilde{\bm{H}}^N(t\wedge\uptau_{-\frac{1}{\epsilon},-\epsilon'}^N) \right| \leq \frac{1}{\epsilon}$.\\ 
In order to check condition $ii)$, let us fix $\varepsilon, \alpha>0$ and take any pair of stopping times $\uptau_1,\uptau_2$ with $0\leq \uptau_1\leq\uptau_2\leq(\uptau_1+\delta)\wedge T$ for some $\delta$.  By \eqref{esplicit expression H^N} we have 
\begin{multline}
\left|\tilde{\bm{H}}^N(\uptau_2\wedge\uptau_{-\frac{1}{\epsilon},-\epsilon'}^N)- \tilde{\bm{H}}^N(\uptau_1\wedge\uptau_{-\frac{1}{\epsilon},-\epsilon'}^N)\right |= \Big|\int_{\uptau_1\wedge\uptau_{-\frac{1}{\epsilon},-\epsilon'}^N}^{\uptau_2\wedge\uptau_{-\frac{1}{\epsilon},-\epsilon'}^N} \tilde{\mathcal{K}}_{N}^{\epsilon,\epsilon'}p_x\left( \tilde{\bm{H}}^N(u),\tilde{\bm{\Theta}}^N(u)\right)\de u \\
 +  \tilde{\bm{M}}^{N,\uptau_1\wedge\uptau_{-\frac{1}{\epsilon},-\epsilon'}^N}(\uptau_2\wedge\uptau_{-\frac{1}{\epsilon},-\epsilon'}^N)\Big| \nonumber
\end{multline}
where $ \tilde{\bm{M}}^{N,\uptau_1\wedge\uptau_{-\frac{1}{\epsilon},-\epsilon'}^N}(\uptau_2\wedge\uptau_{-\frac{1}{\epsilon},-\epsilon'}^N) := \tilde{\bm{M}}^N(\uptau_2\wedge\uptau_{-\frac{1}{\epsilon},-\epsilon'}^N) -\tilde{\bm{M}}^N(\uptau_1\wedge\uptau_{-\frac{1}{\epsilon},-\epsilon'}^N)  $.  Using the optional stopping theorem and the Ito isometry, we have
\begin{eqnarray*}
E\Big[ \Big( \tilde{\bm{M}}^{N,\uptau_1\wedge\uptau_{-\frac{1}{\epsilon},-\epsilon'}^N}(\uptau_2\wedge\uptau_{-\frac{1}{\epsilon},-\epsilon'}^N)\Big)^2\Big] = E\Big[  \Big( \tilde{\bm{M}}^N(\uptau_2\wedge\uptau_{-\frac{1}{\epsilon},-\epsilon'}^N)\Big)^2- \Big( \tilde{\bm{M}}^N(\uptau_1\wedge\uptau_{-\frac{1}{\epsilon},-\epsilon'}^N)\Big)^2\Big] & \\
=  E\Big[ \int_{\uptau_1\wedge\uptau_{-\frac{1}{\epsilon},-\epsilon'}^N}^{\uptau_2\wedge\uptau_{-\frac{1}{\epsilon},-\epsilon'}^N}  \sum_{k=1,2}\sum_{j=1}^{N_k}(\Delta_{i,k}\tilde{\bm{H}}^N)^2(s-)\tilde{\bm{\lambda}}^N_{i,k}(s-)\de s  \Big] 
\leq  C_2 (\uptau_1\wedge\uptau_{-\frac{1}{\epsilon},-\epsilon'}^N -\uptau_2\wedge\uptau_{-\frac{1}{\epsilon},-\epsilon'}^N) & \leq C_2 \delta
\end{eqnarray*}
where $\left(\Delta_{i,k}\tilde{\bm{H}}^N\right)(s-):=\tilde{\bm{H}}^N(s)-\tilde{\bm{H}}^N(s-)$ denotes the jump amplitude of $\tilde{\bm{H}}^N$ corresponding to a jump of the component $\bm{\sigma}^N_{i,k}$ at time $s$ and, by an easy computation,  yields  $(\Delta_{i,k}\tilde{\bm{H}}^N)^2(s-)\leq C_2\frac{1}{N^2}$, for a constant $C_2>0$. Then by Chebychev inequality 
\begin{eqnarray*}
P\left(\Big|\tilde{\bm{M}}^{N,\uptau_1\wedge\uptau_{-\frac{1}{\epsilon},-\epsilon'}^N}(\uptau_2\wedge\uptau_{-\frac{1}{\epsilon},-\epsilon'}^N)\Big|\geq \alpha\right)\leq \frac{E\left[ \left( \tilde{\bm{M}}^{N,\uptau_1\wedge\uptau_{-\frac{1}{\epsilon},-\epsilon'}^N}(\uptau_2\wedge\uptau_{-\frac{1}{\epsilon},-\epsilon'}^N)\right)^2\right]}{\alpha^2}\leq\frac{C_2\delta}{\alpha^2}.
\end{eqnarray*}
Choosing $\delta$ sufficiently small the proposition holds true.
\end{proof}

\paragraph{Averaging principle for the stopped processes.} In this paragraph we show that the sequence $\big\{\big(\tilde{\bm{H}}^N(\cdot\wedge\uptau_{-\frac{1}{\epsilon},-\epsilon'}^N),\tilde{\bm{\Theta}}^N(\cdot\wedge\uptau_{-\frac{1}{\epsilon},-\epsilon'}^N)\big); N>1\big\}$ satisfies the conditions of Theorem \ref{theorem averaging principle}. 
We start by proving the following
\begin{lemma} \label{prop uniform estimate H}
For any bounded $\{\mathcal{F}_t\}$-stopping time $\tau$ and any $\zeta\in \mathbb{R}^+$ there exists a constant $C_\zeta$, independent of $N$ and $\tau$, such that:
\begin{equation*}
E\left[\sup_{s\in[\tau,\tau+\zeta/N]} \big|\tilde{\bm{H}}^N(s\wedge\uptau_{-\frac{1}{\epsilon},-\epsilon'}^N)-\tilde{\bm{H}}^N(t\wedge\uptau_{-\frac{1}{\epsilon},-\epsilon'}^N)\big| \right] \leq \frac{C_\zeta}{\sqrt{N}}.
\end{equation*}
\end{lemma}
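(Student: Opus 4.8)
The plan is to use the Dynkin martingale decomposition of $\tilde{\bm{H}}^N$ and estimate the drift and the martingale part separately on the short interval $[\tau,\tau+\zeta/N]$. Applying Dynkin's formula to the projection $p_x(h,\theta)=h$ and to the generator $\tilde{\mathcal{K}}_N^{\epsilon,\epsilon'}$ of the stopped, sped-up process $\big(\tilde{\bm{H}}^N(\cdot\wedge\uptau_{-\frac{1}{\epsilon},-\epsilon'}^N),\tilde{\bm{\Theta}}^N(\cdot\wedge\uptau_{-\frac{1}{\epsilon},-\epsilon'}^N)\big)$ of Proposition~\ref{prop:GENH}, one obtains a càdlàg martingale $\tilde{\bm{M}}^N$ with
\begin{equation*}
\tilde{\bm{H}}^N\big(s\wedge\uptau_{-\frac{1}{\epsilon},-\epsilon'}^N\big)-\tilde{\bm{H}}^N\big(\tau\wedge\uptau_{-\frac{1}{\epsilon},-\epsilon'}^N\big)=\int_{\tau\wedge\uptau_{-\frac{1}{\epsilon},-\epsilon'}^N}^{s\wedge\uptau_{-\frac{1}{\epsilon},-\epsilon'}^N}\tilde{\mathcal{K}}_N^{\epsilon,\epsilon'}p_x\big(\tilde{\bm{H}}^N(u),\tilde{\bm{\Theta}}^N(u)\big)\de u+\tilde{\bm{M}}^N\big(s\wedge\uptau_{-\frac{1}{\epsilon},-\epsilon'}^N\big)-\tilde{\bm{M}}^N\big(\tau\wedge\uptau_{-\frac{1}{\epsilon},-\epsilon'}^N\big),
\end{equation*}
which is exactly the identity \eqref{esplicit expression H^N} invoked in the tightness proof.

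For the drift, the key point is that $p_x$ depends on $h$ only, so $(p_x)_h\equiv 1$ while $(p_x)_\theta$, $(p_x)_{hh}$, $(p_x)_{h\theta}$, $(p_x)_{\theta\theta}$ all vanish; hence by \eqref{generator micro01} we get $\tilde{\mathcal{K}}_N^{\epsilon,\epsilon'}p_x(h,\theta)=\big(a^H(h,\theta)+o(1)\big)\mathbbm{1}_{(-\frac{1}{\epsilon},-\epsilon')}(h)$, and in particular the term of order $N$ — which multiplies $f_\theta$ — does not contribute at all. Since $a^H$ is of class $\mathcal{C}^1$, hence bounded, on $[-\tfrac{1}{\epsilon},-\epsilon']\times\mathbb{R}/2\pi\mathbb{Z}$ (first part of the proof of Proposition~\ref{existenceuniqueness}), there is a constant $C_1$ independent of $N$ and $\tau$ with $|\tilde{\mathcal{K}}_N^{\epsilon,\epsilon'}p_x|\le C_1$ for $N$ large, so the drift integral above is bounded by $C_1|s-\tau|\le C_1\zeta/N$ uniformly for $s\in[\tau,\tau+\zeta/N]$.

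For the martingale term, by the optional stopping theorem $s\mapsto \tilde{\bm{M}}^N\big((\tau+s)\wedge\uptau_{-\frac{1}{\epsilon},-\epsilon'}^N\big)-\tilde{\bm{M}}^N\big(\tau\wedge\uptau_{-\frac{1}{\epsilon},-\epsilon'}^N\big)$ is a martingale, whose predictable quadratic variation increases at the instantaneous rate $\sum_{k=1,2}\sum_{j=1}^{N_k}\big(\Delta_{i,k}\tilde{\bm{H}}^N\big)^2\tilde{\bm{\lambda}}^N_{i,k}$. A single flip of $\bm{\upvarsigma}^N_{i,k}$ changes $\bm{m}^N_k$ by $1/N_k$, hence — by the $\mathcal{C}^1$ regularity with bounded derivatives of the change of variables of Proposition~\ref{prop:GENH} on $\bar{D}_{\epsilon,\epsilon'}$ — it changes $\tilde{\bm{H}}^N$ by $O(1/N)$, so $\big(\Delta_{i,k}\tilde{\bm{H}}^N\big)^2\le C_2/N^2$; since in the sped-up time scale each of the $N$ particles flips at rate $O(N)$ (the rates $\bm{\lambda}^N$ being bounded by $\|\phi_k\|_\infty$), the quadratic-variation rate is $\le C_2$ after renaming the constant. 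Therefore $E\big[\big(\tilde{\bm{M}}^N((\tau+\zeta/N)\wedge\uptau_{-\frac{1}{\epsilon},-\epsilon'}^N)-\tilde{\bm{M}}^N(\tau\wedge\uptau_{-\frac{1}{\epsilon},-\epsilon'}^N)\big)^2\big]\le C_2\zeta/N$, and Doob's $L^2$ maximal inequality yields $E\big[\sup_{s\in[\tau,\tau+\zeta/N]}|\tilde{\bm{M}}^N(s\wedge\uptau_{-\frac{1}{\epsilon},-\epsilon'}^N)-\tilde{\bm{M}}^N(\tau\wedge\uptau_{-\frac{1}{\epsilon},-\epsilon'}^N)|\big]\le 2\sqrt{C_2\zeta/N}$.

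Adding the two estimates gives $E\big[\sup_{s\in[\tau,\tau+\zeta/N]}|\tilde{\bm{H}}^N(s\wedge\uptau_{-\frac{1}{\epsilon},-\epsilon'}^N)-\tilde{\bm{H}}^N(\tau\wedge\uptau_{-\frac{1}{\epsilon},-\epsilon'}^N)|\big]\le C_1\zeta/N+2\sqrt{C_2\zeta/N}\le C_\zeta/\sqrt{N}$ for $N$ large (using $1/N\le 1/\sqrt{N}$), with $C_\zeta:=C_1\zeta+2\sqrt{C_2\zeta}$ independent of $N$ and $\tau$; the finitely many remaining values of $N$ are harmless because the left-hand side never exceeds $2/\epsilon$. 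The only genuinely delicate step is the first one — observing that passing to $p_x$ removes the $O(N)$ part of $\tilde{\mathcal{K}}_N^{\epsilon,\epsilon'}$ so the drift is truly $O(1/N)$ — together with the bookkeeping "jump amplitude $O(1/N)$, per-particle rate $O(N)$, $O(N)$ particles" that keeps the quadratic-variation rate of order one; the rest is routine.
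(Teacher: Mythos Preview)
Your proof is correct and follows essentially the same route as the paper: Dynkin decomposition of $\tilde{\bm{H}}^N$, the observation that $\tilde{\mathcal{K}}_N^{\epsilon,\epsilon'}p_x=a^H\mathbbm{1}_{(-\frac{1}{\epsilon},-\epsilon')}+o(1)$ is bounded because the $O(N)$ term multiplies $f_\theta$, and a martingale bound via the estimates $(\Delta_{i,k}\tilde{\bm{H}}^N)^2\le C_2/N^2$ and $\tilde{\bm{\lambda}}^N_{i,k}\le C_3 N$. The only cosmetic difference is that the paper controls the martingale supremum via the Burkholder--Davis--Gundy inequality $E[\sup|\tilde{\bm R}^N|]\le C_1E[\langle\tilde{\bm R}^N\rangle^{1/2}]$, whereas you use Doob's $L^2$ maximal inequality; both yield the same $K_\zeta/\sqrt{N}$ bound.
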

\begin{proof}
From the definition of the process $\tilde{\bm{H}}^N$ we can write:
\begin{equation}\label{eq: dH N}
d\tilde{\bm{H}}^N(s)=\int_0^\infty \sum_{k=1,2}\sum_{j=1}^{N_k}\left(\Delta_{i,k}\tilde{\bm{H}}^N\right)(s-)\mathbbm{1}_{\left(0,\tilde{\lambda}^N\left(\bm{\sigma}_{i,k}^N(s-),\bm{m}_k^N(s-),\bm{m}_{k'}^N(s-)\right)\right]}(u)\mathcal{N}^{i,k}(\de s,\de u)
\end{equation}
where $\tilde{\lambda}^N(\sigma_{i,k}^N,m_k^N,m_{k'}^N) = N\lambda^N(\sigma_{i,k}^N,m_k^N,m_{k'}^N) $ with $\lambda^N$ being the jump rate function defined in (\ref{rate function lambdaN}). In what follows we use the short notation:
 $$\tilde{\bm{\lambda}}_{i,k}^N(s-):=\tilde{\lambda}^N\big(\bm{\sigma}_{i,k}^N(s-),\bm{m}_k^N(s-),\bm{m}_{k'}^N(s-)\big).$$ 

Let $\tau$ be a bounded stopping time and $s\geq 0$. Then 
\begin{eqnarray}
\tilde{\bm{H}}^N\left((\tau+s)\wedge\uptau_{-\frac{1}{\epsilon},-\epsilon'}^N\right)- \tilde{\bm{H}}^N\left(\tau\wedge\uptau_{-\frac{1}{\epsilon},-\epsilon'}^N\right) =&&  \int_{ \tau\wedge\uptau_{-\frac{1}{\epsilon},-\epsilon'}^N}^{(\tau+s)\wedge\uptau_{-\frac{1}{\epsilon},-\epsilon'}^N} \tilde{\mathcal{K}}_{N}^{\epsilon,\epsilon'}p_x\left( \tilde{\bm{H}}^N(u),\tilde{\bm{\Theta}}^N(u)\right)\de u \nonumber \\ 
&& + \tilde{\bm{M}}^N\left((\tau+s)\wedge\uptau_{-\frac{1}{\epsilon},-\epsilon'}^N\right)-
 \tilde{\bm{M}}^N\left(\tau\wedge\uptau_{-\frac{1}{\epsilon},-\epsilon'}^N\right) 
 \nonumber
\end{eqnarray}
where $p_x$ is the projection on the first coordinate and the martingale $\tilde{\bm{M}}^N$ is obtained by the sum \eqref{eq: dH N} by replacing $\mathcal{N}^{i,k}$ with its compensated process $\tilde{\mathcal{N}}^{i,k}$  for each $k,i$. 
Now, applying the Burkholder-Davis-Gundy  inequality to the martingale 
$\{\tilde{\bm{R}}^N(s)\}_{s\geq 0}$ defined by $\tilde{\bm{R}}^N(s):=\tilde{\bm{M}}^N\big((\tau+s)\wedge\uptau_{-\frac{1}{\epsilon},-\epsilon'}^N\big)-
 \tilde{\bm{M}}^N\big(\tau\wedge\uptau_{-\frac{1}{\epsilon},-\epsilon'}^N\big)$ we get: 
 \begin{equation*}
E\Big[  \sup_{s\in[0,\zeta/N]} \big| \tilde{\bm{R}}^N(s)\big|\Big] \leq C_1 E\Big[\big\langle \tilde{\bm{R}}^N\big\rangle_{\zeta/N}^{\frac{1}{2}}\Big]
\end{equation*}
where $C_1$ is a constant and $\langle \tilde{\bm{R}^N}\rangle$ denotes the quadratic variation of $\tilde{\bm{R}}^N$. 
Using the fact that $\tilde{\bm{M}}^N$ is the sum of  orthogonal martingales we obtain: 
\begin{eqnarray*}
  E\Big[\big\langle \tilde{\bm{R}}^N\big\rangle_{\zeta/N}^{\frac{1}{2}}\Big]&=&
 E\Big[\Big(\int_{\tau\wedge\uptau_{-\frac{1}{\epsilon},-\epsilon'}^N}^{(\tau+\zeta/N)\wedge\uptau_{-\frac{1}{\epsilon},-\epsilon'}^N} \int_0^\infty \sum_{k=1,2}\sum_{j=1}^{N_k}(\Delta_{i,k}\tilde{\bm{H}}^N)^2(s-)\mathbbm{1}_{\left(0,\tilde{\bm{\lambda}}_{i,k}^N(s-)\right]}(u)\hat{\mathcal{N}}^{i,k}(\de s,\de u)\Big)^{\frac{1}{2}} \Big]\\
&=&E\Big[\Big(\int_{\tau\wedge\uptau_{-\frac{1}{\epsilon},-\epsilon'}^N}^{(\tau+\zeta/N)\wedge\uptau_{-\frac{1}{\epsilon},-\epsilon'}^N}  \sum_{k=1,2}\sum_{j=1}^{N_k}(\Delta_{i,k}\tilde{\bm{H}}^N)^2(s-)\tilde{\bm{\lambda}}_{i,k}^N(s-)\de s \Big)^{\frac{1}{2}} \Big]
\end{eqnarray*}
where $\hat{\mathcal{N}}^{i,k}$ is the compensator of $\mathcal{N}^{i,k}$. We recall that $(\Delta_{i,k}\tilde{\bm{H}}^N)^2(s-)\leq C_2\frac{1}{N^2}$ and we note also that the jump rate function  satisfies $\|\tilde{\lambda}^N\|_\infty\leq C_3 N$ for a constant $C_3>0$. 
Then choosing the right constant $K_\zeta$ we obtain:
\begin{equation}\label{DIS}
E\Big[  \sup_{s\in[0,\zeta/N]} \big| \tilde{\bm{R}}^N(s)\big|\Big]  \leq \frac{K_\zeta}{\sqrt{N}}.
\end{equation}
Moreover, from the properties of the generator $\mathcal{K}_{N}^{\epsilon,\epsilon'}$ we know that, for any $C^2$-function $f$, the function $\left(\mathcal{K}_{N}^{\epsilon,\epsilon'}(f\circ p_x)\right)$ is uniformly bounded. Then there exists a constant $C>0$, independent of $N$ such that
\begin{equation*}
E\Big[\sup_{s\in[\tau, \tau+\zeta/N]}\big|
 \int_{ \tau\wedge\uptau_{-\frac{1}{\epsilon},-\epsilon'}^N}^{(\tau+s)\wedge\uptau_{-\frac{1}{\epsilon},-\epsilon'}^N} \tilde{\mathcal{K}}_{N}^{\epsilon,\epsilon'}p_x\left( \tilde{\bm{H}}^N(u),\tilde{\bm{\Theta}}^N(u)\right)\de u\big|\Big]\leq \frac{C}{N}
\end{equation*}
and the proof is complete.
\end{proof}
Let $\mathcal{A}$ be the operator defined on functions $f\in\mathcal{C}^2\big((-\infty,0)\times\mathbb{R}/2\pi\mathbb{Z}\big)$ by:
\begin{equation} \label{def: gen A}
\mathcal{A}f(h,\theta):= a^H(h,\theta)f_{H}(h,\theta)+a^{HH}(h,\theta)f_{HH}(h,\theta)
\end{equation}
where $a^{H},a^{HH}$ are defined as in \eqref{generator micro001}. 

\begin{prop} \label{Averaging principle}
Consider the sequence $\big\{\big(\tilde{\bm{H}}^N(\cdot\wedge\uptau_{-\frac{1}{\epsilon},-\epsilon'}^N),\tilde{\bm{\Theta}}^N(\cdot\wedge\uptau_{-\frac{1}{\epsilon},-\epsilon'}^N)\big); N>1\big\}$ and the weak limit $\tilde{\bm{H}}_{-\frac{1}{\epsilon},-\epsilon'}$ of $\{\tilde{\bm{H}}^N(\cdot\wedge\uptau_{-\frac{1}{\epsilon},-\epsilon'}^N) ; N>1\}$.
For any $f\in \mathcal{C}^3([-\frac{1}{\epsilon},-\epsilon'])$, up to passing to a subsequence, we have, as $N\to \infty$:
\begin{equation*}
\int_0^T \mathcal{A}f\left(\tilde{\bm{H}}^N(t\wedge\uptau_{-\frac{1}{\epsilon},-\epsilon'}^N),\bm{\Theta}(t\wedge\uptau_{-\frac{1}{\epsilon},-\epsilon'}^N)\right)\de t \xrightarrow{weakly} \int_0^T \bar{\mathcal{A}}f\left(\tilde{\bm{H}}_{-\frac{1}{\epsilon},-\epsilon'}(t) \right)\de t
\end{equation*}
where 
\begin{equation*}
\bar{\mathcal{A}}f( h ) := \bar{a}^H(h)f_H(h) + \bar{a}^{HH}(h)f_{HH}(h)
\end{equation*}
and $\bar{a}^H$, $\bar{a}^{HH}$ are defined as in \eqref{coefficientsSDEa^H} and \eqref{coefficientsSDEa^HH}.
\end{prop}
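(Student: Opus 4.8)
The plan is to verify that the sequence $\big\{\big(\tilde{\bm H}^N(\cdot\wedge\uptau^N_{-\frac{1}{\epsilon},-\epsilon'}),\tilde{\bm\Theta}^N(\cdot\wedge\uptau^N_{-\frac{1}{\epsilon},-\epsilon'})\big)\big\}_N$ fits the framework of Theorem \ref{theorem averaging principle} with $\gamma=1$, slow variable $X^N=\tilde{\bm H}^N(\cdot\wedge\uptau^N_{-\frac{1}{\epsilon},-\epsilon'})$, fast variable $Y^N=\tilde{\bm\Theta}^N(\cdot\wedge\uptau^N_{-\frac{1}{\epsilon},-\epsilon'})$, interval $I=[-\frac{1}{\epsilon},-\epsilon']$, and test function $\xi=\mathcal A f$ for the fixed $f\in\mathcal C^3(I)$. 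The first step is to observe that $\xi$ is Lipschitz on $E=I\times\mathbb R/2\pi\mathbb Z$: since $\mathcal A f(h,\theta)=a^H(h,\theta)f'(h)+a^{HH}(h,\theta)f''(h)$ and, by the estimates recorded in the proof of Proposition \ref{existenceuniqueness}, $a^H$ and $a^{HH}$ are $\mathcal C^1$ (hence bounded and Lipschitz) on the compact set $\overline{D_{\epsilon,\epsilon'}}$, while $f',f''$ are $\mathcal C^1$ on $I$, this is immediate. Granting the three hypotheses, Theorem \ref{theorem averaging principle} then gives $\int_0^T\mathcal A f(X^N,Y^N)\,\de t\to\int_0^T\overline{\mathcal A f}(\tilde{\bm H}_{-\frac{1}{\epsilon},-\epsilon'})\,\de t$ weakly, and since $f'(h),f''(h)$ pull out of the $\theta$-integral, $\overline{\mathcal A f}(h)=\int_0^{2\pi}\mathcal A f(h,\theta)\,\mu^h(\de\theta)=\bar a^H(h)f'(h)+\bar a^{HH}(h)f''(h)=\bar{\mathcal A}f(h)$, which is exactly the assertion (with $\bar X=\tilde{\bm H}_{-\frac{1}{\epsilon},-\epsilon'}$).

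For hypothesis i), I would invoke Proposition \ref{prop tightness H^N}: the sequence $\{\tilde{\bm H}^N(\cdot\wedge\uptau^N_{-\frac{1}{\epsilon},-\epsilon'})\}$ is tight, so by Prokhorov's theorem it admits a weakly convergent subsequence, whose continuous limit I denote $\tilde{\bm H}_{-\frac{1}{\epsilon},-\epsilon'}$ (this is where the "up to a subsequence" in the statement comes from). The uniform increment bound \eqref{eq: hyp i)} with $\gamma=1$ and $e(N)=1/\sqrt N$ is precisely Lemma \ref{prop uniform estimate H}. For hypothesis ii), I would apply the time-rescaled generator \eqref{generator micro01} to the coordinate projection $p_\theta$, for which $f_\theta\equiv1$ and all other partial derivatives vanish; this yields $\mathcal L_N p_\theta=\big[N(-F+o(1))+G\big]\mathbbm{1}_{(-\frac{1}{\epsilon},-\epsilon')\times\mathbb R/2\pi\mathbb Z}+o(1)$, which on the state space of the stopped process is of the required form, with $\gamma=1$, $\delta_N=o(1)$, Lipschitz function $-F$ (the sign being immaterial), and continuous bounded $G$; the strict lower bound $\inf|F|\ge c(\epsilon,\epsilon')>0$ is the positivity of $-\tilde F$ established in the first part of the proof of Proposition \ref{existenceuniqueness}.

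The main work lies in hypothesis iii), the control of the martingale $\tilde{\bm M}^N(t)=\tilde{\bm\Theta}^N(t)-\tilde{\bm\Theta}^N(0)-\int_0^t\mathcal L_N p_\theta\,\de s$. Here I would reproduce the argument of Lemma \ref{prop uniform estimate H}: express $\tilde{\bm\Theta}^N$ through its Poisson-driven SDE, compensate to realize $\tilde{\bm M}^N$ as a sum of orthogonal martingales, and apply the Burkholder--Davis--Gundy inequality to the increment $\tilde{\bm R}^N(s)=\tilde{\bm M}^N((\tau+s)\wedge\uptau^N_{-\frac{1}{\epsilon},-\epsilon'})-\tilde{\bm M}^N(\tau\wedge\uptau^N_{-\frac{1}{\epsilon},-\epsilon'})$, bounding $E\big[\sup_{s\in[0,\zeta/N]}|\tilde{\bm R}^N(s)|\big]\le C\,E\big[\langle\tilde{\bm R}^N\rangle_{\zeta/N}^{1/2}\big]$. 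The crux is to show that each jump of $\tilde{\bm\Theta}^N$ has amplitude $O(1/N)$: a single particle flip moves $\bm m^N_k$ by $1/N_k$, and the partial derivatives $\Theta_x,\Theta_y$ are bounded on $\overline{D_{\epsilon,\epsilon'}}$ (the region being bounded away from the origin, where they would blow up), so $|\Delta_{i,k}\tilde{\bm\Theta}^N|\le C(\epsilon,\epsilon')/N$; combined with $\|\tilde\lambda^N\|_\infty\le C_3N$ this gives $\langle\tilde{\bm R}^N\rangle_{\zeta/N}\le C\zeta/N$, hence \eqref{eq: hyp iii)} with $\bar e(N)=1/\sqrt N$.

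I expect the only genuine obstacles to be this jump-amplitude estimate for $\tilde{\bm\Theta}^N$ and the somewhat delicate bookkeeping required to match the $\mathbbm{1}$-factor and the additive $o(1)$ in \eqref{generator micro01} to the clean form demanded by Theorem \ref{theorem averaging principle}~ii) on the (effective) state space of the stopped process; every other ingredient is a direct transcription of results already proved. Once hypotheses i)--iii) are checked, Theorem \ref{theorem averaging principle} applies with the identifications above and the proposition follows.
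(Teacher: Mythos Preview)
Your proposal is correct and follows essentially the same route as the paper: verify the Lipschitz property of $\xi=\mathcal{A}f$, then check hypotheses i)--iii) of Theorem \ref{theorem averaging principle} by invoking Proposition \ref{prop tightness H^N} and Lemma \ref{prop uniform estimate H} for i), the form of the generator \eqref{generator micro01} together with the lower bound on $F$ for ii), and a BDG argument on the Poisson representation of $\tilde{\bm\Theta}^N$ with the jump-size estimate $|\Delta_{j,k}\tilde{\bm\Theta}^N|\le C/N$ for iii). Your explicit identification $\overline{\mathcal{A}f}=\bar{\mathcal{A}}f$ by pulling $f',f''$ out of the $\theta$-integral is a small clarification the paper leaves implicit, but otherwise the arguments coincide.
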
	
\begin{proof}
We first observe that by the regularity of  $a^H$ and $a^{HH}$ on $[-\frac{1}{\epsilon},-\epsilon']\times\mathbb{R}/2\pi\mathbb{Z}$ it follows that, for any $f\in C^3([-\frac{1}{\epsilon},-\epsilon'])$, the function $\mathcal{A}f:[-\frac{1}{\epsilon},-\epsilon']\times \mathbb{R}/2\pi\mathbb{Z}\to \mathbb{R}$ is Lipschitz in both variables.  
By Proposition \ref{prop tightness H^N} and  Lemma \ref{prop uniform estimate H} hypothesis $i)$ of Theorem \ref{theorem averaging principle} is satisfied. 

From \eqref{generator micro001} and the fact that  $F\geq c(\epsilon,\epsilon')>0$ (see the proof of Proposition \ref{prop:GENH} in subsection \ref{s:mainresult})  we immediately observe that hypothesis $ii)$ holds too. To show hypothesis $iii)$ we find a uniform bound for the martingale term in the relation:

\begin{eqnarray*}\label{eq: integ Theta^N}
\tilde{\bm{\Theta}}^N(s\wedge\uptau_{-\frac{1}{\epsilon},-\epsilon'}^N)-\tilde{\bm{\Theta}}^N(t\wedge\uptau_{-\frac{1}{\epsilon},-\epsilon'}^N)=&& \int_{t\wedge\uptau_{-\frac{1}{\epsilon},-\epsilon'}^N}^{s\wedge\uptau_{-\frac{1}{\epsilon},-\epsilon'}^N}\tilde{\mathcal{K}}_{N}^{\epsilon,\epsilon'}p_y\left(\tilde{\bm{H}}^N(u),\tilde{\bm{\Theta}}^N(u)\right)\de u\\
&& + \bm{M}^N(s\wedge\uptau_{-\frac{1}{\epsilon},-\epsilon'}^N)-\bm{M}^N(t\wedge\uptau_{-\frac{1}{\epsilon},-\epsilon'}^N)
\end{eqnarray*}
where $t,s\in[0,T]$ and $p_y$ denotes the projection on the second coordinate.

From \eqref{eq: integ Theta^N} we write 
\begin{equation*}
\de\tilde{\bm{\Theta}}^N(s)=\int_0^\infty \sum_{k=1,2}\sum_{j=1}^{N_k}\big(\Delta_{j,k}\tilde{\bm{\Theta}}^N\big)(s-) \mathbbm{1}_{\left(0,\tilde{\bm{\lambda}}^N_{i,k}(s-)\right]}(u)\mathcal{N}^{j,k}(\de s,\de u)
\end{equation*}

where  $\big(\Delta_{j,k}\tilde{\bm{\Theta}}^N\big)(s-)$ denotes the jump amplitude of $\tilde{\bm{\Theta}}^N$ corresponding to a jump of the component $\bm{\sigma}_{i,k}^N$ at time $s$. If we write it explicitly using the change of variables \eqref{thetaMacro} we immediately obtain that $|\Delta_{j,k}\tilde{\bm{\Theta}}^N|\leq C\frac{1}{N} + o(\frac{1}{N^2})$ for a suitable constant $C>0$. 
By the same argument used to obtain \eqref{DIS} in the proof of Lemma \ref{prop uniform estimate H} we get, for all stopping time $\tau$:
\begin{equation*}
E\left[\sup_{s\in[\tau,\tau+\zeta/N]} \left|\bm{M}^N(s\wedge\uptau_{-\frac{1}{\epsilon},-\epsilon'}^N)-\bm{M}^N(t\wedge\uptau_{-\frac{1}{\epsilon},-\epsilon'}^N) \right|\right] \leq  \frac{C_\zeta}{\sqrt{N}}. 
\end{equation*}

\end{proof}

\paragraph{Proof of Proposition \ref{prop same distribution}} 
Before giving the proof we state a useful result. 
Let $L$ be a linear operator defined for bounded measurable functions on a metric space $E$, let $U$  be  an open subset of $E$ and  let $X$ be a càdlàg process. We recall that the process $X(\cdot\wedge\tau)$, where $\tau$ is the exit time from $U$ of the process $X$,  is said to be a solution of the $(L,U)$-stopped martingale problem if
\begin{equation}\label{stoppedmart}
f\big(X(t\wedge\tau)\big)-f\big(X(0)\big)-\int_0^{t\wedge\tau}Lf\big(X(s)\big)\de s
\end{equation}
is a martingale for all $f\in dom(L)$.
\begin{theorem} \label{teo:Kurtz stopping}
(\cite{EthierKurtz}, Ch.\ 4 Thm 6.1) Let $(E,d)$ be a Polish space and let $L$ be a linear operator $L:\mathcal{C}_b(E)\to B(E)$. If the $\mathcal{D}([0,T],E)$ martingale problem for $L$ is well-posed , then for any open set $U\subset E$ there exists a unique solution of the stopped martingale problem $(L,U)$. 
\end{theorem}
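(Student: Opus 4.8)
The plan is to derive the statement entirely from the assumed well-posedness of the unstopped $\mathcal{D}([0,T],E)$-martingale problem for $L$, handling existence and uniqueness of the $(L,U)$-stopped problem separately.

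\textbf{Existence.} For a prescribed initial law $\mu$, I would take a solution $X$ of the martingale problem for $L$ with $X(0)\sim\mu$ (which exists by well-posedness) on some filtered probability space, and set $\tau=\inf\{t\ge0:X(t)\notin U\}$; since $U$ is open and $X$ is càdlàg, $\tau$ is an $\{\mathcal{F}_t\}$-stopping time. For each $f\in\mathrm{dom}(L)$ the process $t\mapsto f(X(t))-\int_0^t Lf(X(s))\,\de s$ is a martingale, and because $Lf$ is bounded its compensator is Lipschitz in $t$, so this martingale is uniformly integrable on $[0,T]$; the optional sampling theorem then shows that $f(X(t\wedge\tau))-\int_0^{t\wedge\tau}Lf(X(s))\,\de s$ is again a martingale. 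Hence $X(\cdot\wedge\tau)$ solves the $(L,U)$-stopped martingale problem.

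\textbf{Uniqueness.} This is the substantive part. Given any solution $Y$ of the $(L,U)$-stopped problem with $Y(0)\sim\mu$, and writing $\tau$ for its exit time from $U$ (so $Y=Y(\cdot\wedge\tau)$), the idea is to ``restart'' $Y$ at time $\tau$ so as to produce a genuine solution of the unstopped problem. First, well-posedness yields, via a measurable-selection argument on the Polish space $\mathcal{D}([0,T],E)$, a measurable family $x\mapsto Q_x$ of laws on path space with $Q_x$ solving the martingale problem for $L$ started at $x$. On a possibly enlarged probability space I would define $Z$ to coincide with $Y$ on $[0,\tau]$ and, conditionally on $\mathcal{F}_\tau$ on the event $\{\tau<T\}$, to follow on $[\tau,T]$ an independent path with law $Q_{Y(\tau)}$ started at time $\tau$. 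The key verification is that $Z$ solves the unstopped martingale problem for $L$ with $Z(0)\sim\mu$: for each $f\in\mathrm{dom}(L)$, the process $f(Z(t))-\int_0^t Lf(Z(s))\,\de s$ is a martingale on $[0,\tau]$ by the stopped-martingale property of $Y$, while its increments over $[\tau,T]$ are controlled by conditioning on $\mathcal{F}_\tau$ and using that $Q_{Y(\tau)}$ solves the martingale problem from $Y(\tau)$; this is the standard concatenation-at-a-stopping-time argument for martingale problems.

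Once $Z$ is a solution of the unstopped problem with $Z(0)\sim\mu$, its law depends only on $\mu$ by well-posedness. Since $Z$ does not leave $U$ before $\tau$ and $Z(\tau)\notin U$ (which forces the exit time of $Z$ from $U$ to equal $\tau$ exactly), the stopped process $Y=Z(\cdot\wedge\tau)$ is a fixed measurable functional of the path $Z$, so the law of $Y$ is determined by $\mu$. Running the same construction on a second solution $Y'$ with $Y'(0)\sim\mu$ produces $Z'$ with the same law as $Z$, whence $Y'\stackrel{d}{=}Y$, giving uniqueness of the stopped martingale problem. I expect the main obstacle to be precisely this restarting step: constructing the measurable kernel $x\mapsto Q_x$, handling the regular conditional distribution of the post-$\tau$ path on $\mathcal{D}([0,T],E)$, and checking that the behaviour at the (possibly jump) exit time $\tau$ causes no difficulty---in particular that $Z(\tau)\notin U$, so that no re-entry bookkeeping is needed---these being exactly the technical ingredients developed in \cite[Ch.~4]{EthierKurtz}.
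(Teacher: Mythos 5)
The paper does not prove this statement at all: it is quoted verbatim from \cite{EthierKurtz} (Ch.~4, Theorem~6.1), so there is no internal proof to compare against. Your argument is essentially the standard Ethier--Kurtz proof of that theorem and is correct in outline: existence by optional stopping of a solution of the unstopped problem, and uniqueness by measurable selection of the solution kernel $x\mapsto Q_x$ (well-posedness implies measurability, \cite{EthierKurtz} Ch.~4), concatenation of the stopped solution with $Q_{Y(\tau)}$ at the exit time (where right-continuity plus closedness of $E\setminus U$ indeed gives $Y(\tau)\notin U$, so the exit time of the concatenated process is exactly $\tau$), uniqueness of the unstopped problem, and recovery of the stopped law as a measurable functional of the concatenated path --- precisely the ingredients of the cited source.
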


\noindent We will show that  \eqref{stoppedmart} holds for $X=\tilde{\bm{H}}_{-\frac{1}{\epsilon}, -\epsilon}$ and $L=\bar{\mathcal{A}}$ with $f$ in a subset of $dom(\bar{\mathcal{A}})$ which is \emph{not} a measure determining class for $\mathcal{D}\big([0,T];[-\frac{1}{\epsilon},-\epsilon']\big)$ . This motivates the restriction to the interval $(-\frac{1}{\epsilon}+\delta. -\epsilon'-\delta)$ in the statement of Proposition \ref{prop same distribution}.

\begin{proof}[proof of Proposition \ref{prop same distribution}]
Weak convergence of $\big\{\tilde{\bm{H}}(\cdot\wedge\uptau_{-\frac{1}{\epsilon},-\epsilon'}^N);N\geq1\big\}$ to  $\tilde{\bm{H}}_{-\frac{1}{\epsilon},-\epsilon'}$ follows immediately from Proposition \ref{prop tightness H^N}.  Notice that $\sup_{s\in [0,T\wedge \uptau^N_{-\frac{1}{\epsilon}, -\epsilon'}]}|\tilde{\bm{H}}^N(s)-\tilde{\bm{H}}^N(s-)|\leq \frac{C}{N}$ for a suitable constant $C$, then (see \cite{EthierKurtz} Ch.\ 3, Thm 10.2) the limit process  $\tilde{\bm{H}}_{-\frac{1}{\epsilon},-\epsilon'}$ is continuous.\\
In the setting of the proof of Proposition \ref{averagingpr} we consider a suitable probability space where the  above convergence is almost sure. In such space we have also, up to passing to a subsequence,  
\begin{equation}\label{eq: converg as averag}
\int_{s}^{t}\mathcal{A}f\left(\tilde{\bm{H}}^N(u\wedge\uptau_{-\frac{1}{\epsilon},-\epsilon'}^N),\tilde{\bm{\Theta}}^N(u\wedge\uptau_{-\frac{1}{\epsilon},-\epsilon'}^N )\right)\de u  \xrightarrow{a.s.}  \int_s^t \bar{\mathcal{A}}f \left(\tilde{\bm{H}}_{-\frac{1}{\epsilon},-\epsilon'}(u)\right)\de u.
\end{equation}
Now for any $f\in C^3_c\big((-\frac{1}{\epsilon},-\epsilon')\big)$ the process defined by
\begin{equation*}
\bm{Z}_{-\frac{1}{\epsilon},-\epsilon'}^f(t):=f\left(\tilde{\bm{H}}_{-\frac{1}{\epsilon},-\epsilon'}(t)\right)-f\left(\tilde{\bm{H}}_{-\frac{1}{\epsilon},-\epsilon'}(0)\right)-\int_0^t\bar{\mathcal{A}}f\left(\tilde{\bm{H}}_{-\frac{1}{\epsilon},-\epsilon'}(s)\right)\de s
\end{equation*}
is a martingale. Indeed consider  the martingale 
\begin{equation*}
\bm{M}^{N,f}(t):= f\left(\tilde{\bm{H}}^N(t\wedge\uptau_{-\frac{1}{\epsilon},-\epsilon'}^N)\right)-f\left(\tilde{\bm{H}}^N(0)\right)-\int_0^{t\wedge\uptau_{-\frac{1}{\epsilon},-\epsilon'}^N}\tilde{\mathcal{K}}_{N}^{\epsilon,\epsilon'}f\left(\tilde{\bm{H}}^N(s),\tilde{\bm{\Theta}}^N(s)\right)\de s
\end{equation*}
and  the process 
\begin{equation*}
\bm{Z}^{N,f}(t):= f\left(\tilde{\bm{H}}^N(t\wedge\uptau_{-\frac{1}{\epsilon},-\epsilon'}^N)\right)-f\left(\tilde{\bm{H}}^N(0)\right)-\int_0^{t\wedge\uptau_{-\frac{1}{\epsilon},-\epsilon'}^N}\mathcal{A}^{\epsilon,\epsilon'}f\left(\tilde{\bm{H}}^N(s),\tilde{\bm{\Theta}}^N(s)\right)\de s
\end{equation*}
with $\mathcal{A}^{\epsilon,\epsilon'}:=\mathbbm{1}_{(-\frac{1}{\epsilon},-\epsilon')}\mathcal{A} $  (see formula \eqref{def: gen A}).
Observing that $\tilde{\mathcal{K}}_{N}^{\epsilon,\epsilon'}f(h,\theta)= \mathcal{A}^{\epsilon,\epsilon'}f(h,\theta) + o(1)$, we have $\forall\ m\geq1$,  $\forall\  g_1,\ldots,g_m$ continuous and bounded functions on $[-\frac{1}{\epsilon},-\epsilon']$ and $0\leq t_1\leq \ldots\leq t_m\leq s\leq t\leq T$, 
\begin{equation*}
E\left[  \left(\bm{Z}^{N,f}(t)-\bm{Z}^{N,f}(s) \right) g_1\left(\tilde{\bm{H}}^N(t_1\wedge\uptau_{-\frac{1}{\epsilon},-\epsilon'}^N)\right) \cdot\ldots\cdot g_m\left(\tilde{\bm{H}}^N(t_m\wedge\uptau_{-\frac{1}{\epsilon},-\epsilon'}^N) \right)\right] = o(1).
\end{equation*} 

Let us write it explicitly 
\begin{multline}\label{eq: explicit mart property}
E\left[  \left( f\left(\tilde{\bm{H}}^N(t\wedge\uptau_{-\frac{1}{\epsilon},-\epsilon'}^N )\right) - f\left(\tilde{\bm{H}}^N(s\wedge\uptau_{-\frac{1}{\epsilon},-\epsilon'}^N) \right)\right)  g_1\left(\tilde{\bm{H}}^N(t_1\wedge\uptau_{-\frac{1}{\epsilon},-\epsilon'}^N)\right) \cdot\ldots\cdot \right. \\
\left.  g_m\left(\tilde{\bm{H}}^N(t_m\wedge\uptau_{-\frac{1}{\epsilon},-\epsilon'}^N) \right) \right]   +  \\
E\Bigg[ \Bigg( - \int_{s\wedge\uptau_{-\frac{1}{\epsilon},-\epsilon'}^N}^{t\wedge\uptau_{-\frac{1}{\epsilon},-\epsilon'}^N}\mathcal{A}^{\epsilon,\epsilon'}f\left(\tilde{\bm{H}}^N(u),\tilde{\bm{\Theta}}^N(u)\right)\de u\Bigg)  g_1\left(\tilde{\bm{H}}^N(t_1\wedge\uptau_{-\frac{1}{\epsilon},-\epsilon'}^N)\right) \cdot\ldots\cdot \\
 g_m\left(\tilde{\bm{H}}^N(t_m\wedge\uptau_{-\frac{1}{\epsilon},-\epsilon'}^N) \right)\Bigg]  = o(1).
\end{multline}
Note that all the terms in the expectations above are uniformly bounded with respect to $N$.  Consider the second term of \eqref{eq: explicit mart property} and observe that:  
\begin{eqnarray*}
\int_{s\wedge\uptau_{-\frac{1}{\epsilon},-\epsilon'}^N}^{t\wedge\uptau_{-\frac{1}{\epsilon},-\epsilon'}^N}\mathcal{A}^{\epsilon,\epsilon'}f\left(\tilde{\bm{H}}^N(u),\tilde{\bm{\Theta}}^N(u)\right)\de u &=& \int_{s}^{t}\mathcal{A}^{\epsilon,\epsilon'}f\left(\tilde{\bm{H}}^N(u\wedge\uptau_{-\frac{1}{\epsilon},-\epsilon'}^N),\tilde{\bm{\Theta}}^N(u\wedge\uptau_{-\frac{1}{\epsilon},-\epsilon'}^N )\right)\de u \\
&=&  \int_{s}^{t}\mathcal{A}f\left(\tilde{\bm{H}}^N(u\wedge\uptau_{-\frac{1}{\epsilon},-\epsilon'}^N),\tilde{\bm{\Theta}}^N(u\wedge\uptau_{-\frac{1}{\epsilon},-\epsilon'}^N )\right)\de u
\end{eqnarray*}
where the last equality comes from the fact that $f$ has compact support. Therefore, the conclusion follows from \eqref{eq: explicit mart property} using \eqref{eq: converg as averag} and dominated convergence theorem. \\

Now we observe that the process $\tilde{\bm{H}}_{-\frac{1}{\epsilon},-\epsilon'}(\cdot\wedge\tilde{\uptau}_{-\frac{1}{\epsilon}+\delta,-\epsilon'-\delta}) $ is a solution of the $\big(\bar{\mathcal{A}}, U\big)$-stopped martingale problem with $U=(-\frac{1}{\epsilon}+\delta,-\epsilon'-\delta)$ . Indeed for each given $g$ in $\mathcal{C}^3_0\big( [-\frac{1}{\epsilon}+\delta,-\epsilon'-\delta]\big)$ (which is measure determining for $\mathcal{D}\big([0,T];[-\frac{1}{\epsilon}+\delta,-\epsilon'-\delta]\big)$ ) there exists a function $f\in \mathcal{C}^3_c\big((-\frac{1}{\epsilon},-\epsilon')\big)$ such that $g(x)=f(x)$ for all $x\in(-\frac{1}{\epsilon}+\delta,-\epsilon'-\delta)$ and so
\begin{eqnarray*}
\bm{Z}^f_{\frac{1}{\epsilon},-\epsilon'}(t\wedge\tilde{\uptau}_{-\frac{1}{\epsilon}+\delta,-\epsilon'-\delta})  = && g\big(\tilde{\bm{H}}_{-\frac{1}{\epsilon},-\epsilon'}(t\wedge \tilde{\uptau}_{-\frac{1}{\epsilon}+\delta,-\epsilon'-\delta})\big)-g\big(\tilde{\bm{H}}_{-\frac{1}{\epsilon},-\epsilon'}(0)\big) \\
 && -\int_0^{t\wedge\tilde{\uptau}_{-\frac{1}{\epsilon}+\delta,-\epsilon'-\delta}}\bar{\mathcal{A}}g\big(\tilde{\bm{H}}_{-\frac{1}{\epsilon},-\epsilon'}(s)\big)\de s
\end{eqnarray*}
is a martingale. Moreover, by Proposition \ref{existenceuniqueness} the  martingale problem for $\bar{\mathcal{A}}$ is well-posed and has solution $\bm{H}$;  then,  by Theorem \ref{teo:Kurtz stopping},  $\tilde{\bm{H}}_{-\frac{1}{\epsilon},-\epsilon'}(\cdot\wedge \tilde{\uptau}_{-\frac{1}{\epsilon}+\delta,-\epsilon'-\delta})$ and $\bm{H}(\cdot\wedge\uptau_{-\frac{1}{\epsilon}+\delta,-\epsilon'-\delta})$ have the same distribution. 
\end{proof}
We are ready to prove Theorem \ref{Main}.
\paragraph{Proof of Theorem \ref{Main}}
We have to show that, for every $\epsilon>0$,  $\tilde{\bm{H}}^N(\cdot\wedge\uptau_\epsilon)$ converges to $\bm{H}(\cdot\wedge\uptau_\epsilon)$ as $N\to\infty$.
First of all observe that the weak limit $\tilde{\bm{H}}_{-\frac{1}{\epsilon},-\epsilon'}$ of the sequence $\{\tilde{\bm{H}}^N(\cdot\wedge\uptau_{-\frac{1}{\epsilon},-\epsilon'}^N); N>1\}$ is continuous a.s., hence convergence also holds endowing the space $\mathcal{D}([0,T],\mathbb{R})$ with the uniform topology (see \cite{Silvestrov}). Let $f\in C_b(\mathcal{D}([0,T],\mathbb{R})$ and consider
\begin{equation*}
S_\epsilon= \left|E\left[f\left(\tilde{\bm{H}}^N(\cdot\wedge\uptau_{-\frac{1}{\epsilon},0}^N)\right)\right]-E\left[f\left(\bm{H}(\cdot\wedge\uptau_{-\frac{1}{\epsilon},0})\right)\right]\right|.
\end{equation*}

For any $N>1$ and $\delta,\epsilon'>0$ we write
\begin{align}
S_\epsilon\leq &  \left|E\left[f\left(\tilde{\bm{H}}^N(\cdot\wedge\uptau_{-\frac{1}{\epsilon},0}^N)\right)\right] -E\left[f\left(\tilde{\bm{H}}^N(\cdot\wedge\uptau_{-\frac{1}{\epsilon},-\epsilon'}^N)\right)\right]\right| \nonumber \\
+ & \left|E\left[f\left(\tilde{\bm{H}}^N(\cdot\wedge\uptau_{-\frac{1}{\epsilon},-\epsilon'}^N)\right)\right] - E\left[f\left(\tilde{\bm{H}}_{-\frac{1}{\epsilon},-\epsilon'}\right)\right]\right| \nonumber\\ 
+ & \left|E\left[f\left(\tilde{\bm{H}}_{-\frac{1}{\epsilon},-\epsilon'}\right)\right] - E\left[f\left(\tilde{\bm{H}}_{-\frac{1}{\epsilon},-\epsilon'}(\cdot\wedge\tilde{\uptau}_{-\frac{1}{\epsilon}+\delta,-\epsilon'-\delta})\right)\right] \right|  \label{localization}\\
+ &\left|E\left[f\left(\tilde{\bm{H}}_{-\frac{1}{\epsilon},-\epsilon'}(\cdot\wedge\tilde{\uptau}_{-\frac{1}{\epsilon}+\delta,-\epsilon'-\delta})\right)\right] -E\left[f\left(\bm{H}(\cdot\wedge\uptau_{-\frac{1}{\epsilon}+\delta,-\epsilon'-\delta})\right)\right]\right|  \nonumber\\
+ &\left|E\left[f\left(\bm{H}(\cdot\wedge\uptau_{-\frac{1}{\epsilon}+\delta,-\epsilon'-\delta})\right)\right]-E\left[f\left(\bm{H}(\cdot\wedge\uptau_{-\frac{1}{\epsilon}-\delta,0})\right)\right]\right| \nonumber\\
+ & \left| E\left[f\left(\bm{H}(\cdot\wedge\uptau_{-\frac{1}{\epsilon}-\delta,0})\right)\right] - E\left[f\left(\bm{H}(\cdot\wedge\uptau_{-\frac{1}{\epsilon},0})\right)\right] \right|. \nonumber
\end{align}
We first estimate the quantities related to the macroscopic process. 
By Proposition \ref{prop same distribution} 
\begin{equation*}
\left|E\left[f\left(\tilde{\bm{H}}_{-\frac{1}{\epsilon},-\epsilon'}(\cdot\wedge\tilde{\uptau}_{-\frac{1}{\epsilon}+\delta,-\epsilon'-\delta})\right)\right] -E\left[f\left(\bm{H}(\cdot\wedge\uptau_{-\frac{1}{\epsilon}+\delta,-\epsilon'-\delta})\right)\right]\right|  = 0.
\end{equation*}
Now, let us fix $\gamma>0$. 
The processes $\bm{H}$ and $\tilde{\bm{H}}_{-\frac{1}{\epsilon},-\epsilon'}$ are continuous and, as $\delta\to 0$, we have $\tilde{\uptau}_{-\frac{1}{\epsilon}+\delta,-\epsilon'-\delta}\longrightarrow\tilde{\uptau}_{-\frac{1}{\epsilon},-\epsilon'}$ and $\uptau_{-\frac{1}{\epsilon}-\delta,0}\longrightarrow\uptau_{-\frac{1}{\epsilon},0}$. 
Then, we can choose $\delta$ small enough such that
\begin{eqnarray}
&&\left| E\left[f\left(\tilde{\bm{H}}_{-\frac{1}{\epsilon},-\epsilon'}\right)\right] - E\left[f\left(\tilde{\bm{H}}_{-\frac{1}{\epsilon},-\epsilon'}(\cdot\wedge\tilde{\uptau}_{-\frac{1}{\epsilon}+\delta,-\epsilon'-\delta})\right)\right] \right| < \gamma, \nonumber \\
&&\left| E\left[f\left(\bm{H}(\cdot\wedge\uptau_{-\frac{1}{\epsilon}-\delta,0})\right)\right] - E\left[f\left(\bm{H}(\cdot\wedge\uptau_{-\frac{1}{\epsilon},0})\right)\right] \right| < \gamma. \nonumber
\end{eqnarray} 
Define the exit times of $\bm{H}$ from the left and right  boundaries of the domain as
\begin{eqnarray}\label{def: leftexit}
l_{-\frac{1}{\epsilon}+\delta,-\epsilon'-\delta}:= \inf\left\{ t\in[0,T]: \bm{H}(t)\leq -\tfrac{1}{\epsilon}+\delta,\  \bm{H}(s)\in\left(-\tfrac{1}{\epsilon}+\delta,-\epsilon'-\delta\right)\ \forall\ s<t\right\};\\	\label{def: rightexit}
r_{-\frac{1}{\epsilon}+\delta,-\epsilon'-\delta}:= \inf\left\{ t\in[0,T]: \bm{H}(t)\geq -\epsilon'-\delta,\  \bm{H}(s)\in\left(-\tfrac{1}{\epsilon}+\delta,-\epsilon'-\delta\right)\ \forall\ s<t\right\}. 
\end{eqnarray}
 By Proposition \ref{existenceuniqueness}, we can choose $-\epsilon'$ small enough such that:
\begin{equation*}
\left|E\left[f\left(\bm{H}(\cdot\wedge\uptau_{-\frac{1}{\epsilon}+\delta,-\epsilon'-\delta})\right)\right]-E\left[f\left(\bm{H}(\cdot\wedge\uptau_{-\frac{1}{\epsilon}-\delta,0})\right)\right]\right| \leq \|f\|_{\infty}P\left(r_{-\frac{1}{\epsilon}+\delta,-\epsilon'-\delta}<l_{-\frac{1}{\epsilon}+\delta,-\epsilon'-\delta})\right)<\gamma. 
\end{equation*}
We are left to estimate the first two terms of inequality \eqref{localization}.  
Let $r_{-\frac{1}{\epsilon}+\delta,-\epsilon'-\delta}^N$ and $l_{-\frac{1}{\epsilon}+\delta,-\epsilon'-\delta}^N$ be as in \eqref{def: leftexit} and \eqref{def: rightexit} with $\tilde{\bm{H}}^N$ in place of $\bm{H}$. Analogously, 
we define $ \tilde{l}_{-\frac{1}{\epsilon}-\sfrac{\delta}{2},-\epsilon'-\delta}$ and  $\tilde{r}_{-\frac{1}{\epsilon}-\sfrac{\delta}{2},-\epsilon'-\delta}$ taking $\tilde{\bm{H}}_{-\frac{1}{\epsilon}-\delta,-\epsilon'}$ in place of $\bm{H}$.
For the first term we have 
\begin{align*}
 \left|E\left[f\left(\tilde{\bm{H}}^N(\cdot\wedge\uptau_{-\frac{1}{\epsilon},0}^N)\right)\right] -E\left[f\left(\tilde{\bm{H}}^N(\cdot\wedge\uptau_{-\frac{1}{\epsilon},-\epsilon'}^N)\right)\right]\right| \leq & 
 \|f\|_{\infty}P\left(r_{-\frac{1}{\epsilon},-\epsilon'}^N<l_{-\frac{1}{\epsilon},-\epsilon'}^N)\right)  \\
 \leq &  \|f\|_{\infty}P\left(r_{-\frac{1}{\epsilon},-\epsilon'-\delta}^N<l_{-\frac{1}{\epsilon},-\epsilon'-\delta}^N)\right) \\
 \leq & \|f\|_{\infty}P\left(r_{-\frac{1}{\epsilon}-\sfrac{\delta}{4},-\epsilon'-\delta}^N<l_{-\frac{1}{\epsilon}-\frac{\delta}{4},-\epsilon'-\delta}^N)\right). 
\end{align*}
 Consider the closed set 
\begin{equation*}
R_{\delta,\frac{\delta}{4}}:=\left\{ \bm{x}\in \mathcal{D}\left([0,T],\mathbb{R}\right) : \exists \bar{t}\in [0,T]\ s.t.\ \bm{x}(\bar{t})\geq -\epsilon'-\delta\ \text{and}\   -\tfrac{1}{\epsilon}+\tfrac{\delta}{4}\leq \bm{x}(s) <-\epsilon'-\delta,\ \forall s<\bar{t}   \right\}
\end{equation*}
and observe that
\begin{equation*}
P\left(r_{-\frac{1}{\epsilon}-\sfrac{\delta}{4},-\epsilon'-\delta}^N<l_{-\frac{1}{\epsilon}-\sfrac{\delta}{4},-\epsilon'-\delta}^N)\right) \leq  P\left(\tilde{\bm{H}}^N(\cdot\wedge\uptau_{-\frac{1}{\epsilon}-\delta,-\epsilon'}^N)\in R_{\delta,\frac{\delta}{4}} \right); 
\end{equation*}
then by Portmanteau Theorem it follows   
\begin{equation*}
\limsup_N P\left(\tilde{\bm{H}}^N(\cdot\wedge\uptau_{-\frac{1}{\epsilon}-\delta,-\epsilon'}^N)\in R_{\delta,\frac{\delta}{4}} \right)\leq P\left(\tilde{\bm{H}}_{-\frac{1}{\epsilon}-\delta,-\epsilon'}\in R_{\delta,\frac{\delta}{4}} \right).
\end{equation*}
Since the processes $\tilde{\bm{H}}_{-\frac{1}{\epsilon}+\delta,-\epsilon'}(\cdot\wedge\tilde{\uptau}_{-\frac{1}{\epsilon}+\sfrac{\delta}{2},-\epsilon'+\delta})$ and $\bm{H}(\cdot\wedge\uptau_{-\frac{1}{\epsilon}+\sfrac{\delta}{2},-\epsilon'+\delta})$ have the same distribution, we obtain
\begin{equation*}
P\left(\tilde{\bm{H}}_{-\frac{1}{\epsilon}-\delta,-\epsilon'}\in R_{\delta,\frac{\delta}{4}}\right) \leq P\left( \tilde{r}_{-\frac{1}{\epsilon}-\sfrac{\delta}{2},-\epsilon'+\delta}<  \tilde{l}_{-\frac{1}{\epsilon}-\sfrac{\delta}{2},-\epsilon'+\delta} \right) = P\left( r_{-\frac{1}{\epsilon}-\sfrac{\delta}{2},-\epsilon'+\delta}< l_{-\frac{1}{\epsilon}-\sfrac{\delta}{2},-\epsilon'+\delta} \right).
\end{equation*}
 Using again Proposition \ref{existenceuniqueness}, we can choose $-\epsilon'$ small enough and $N$ big enough such that 
\begin{equation*}
 \left|E\left[f\left(\tilde{\bm{H}}^N(\cdot\wedge\uptau_{-\frac{1}{\epsilon},0}^N)\right)\right] -E\left[f\left(\tilde{\bm{H}}^N(\cdot\wedge\uptau_{-\frac{1}{\epsilon},-\epsilon'}^N)\right)\right]\right| <\gamma.
\end{equation*} 
Finally, for the second term of \eqref{localization}, by the convergence of $\tilde{\bm{H}}^N(\cdot\wedge\uptau_{-\frac{1}{\epsilon},-\epsilon'}^N)$ to $\tilde{\bm{H}}_{-\frac{1}{\epsilon},-\epsilon'}$, we can take $N$ big enough such that
\begin{equation*}
\left|E\left[f\left(\tilde{\bm{H}}^N(\cdot\wedge\uptau_{-\frac{1}{\epsilon},-\epsilon'}^N)\right)\right] - E\left[f\left(\tilde{\bm{H}}_{-\frac{1}{\epsilon},-\epsilon'}\right)\right]\right| <\gamma 
\end{equation*}
and the proof is complete.\\

\vspace{1cm}
\noindent {\bf Acknowledgement.} \\
\noindent 
{
The authors thank Paolo Dai Pra for useful discussions and suggestions. The authors are also grateful to Fabio Antonelli for suggesting them some references. 
The authors are members of the Gruppo Nazionale per
l'Analisi Matematica, la Probabilit\`a e le loro Applicazioni (GNAMPA) of the Istituto Nazionale di Alta Matematica (INdAM).
}

\bibliography{bibliografy}
\bibliographystyle{amsplain}

\end{document}